\newcommand{\R}{\mathbb{R}}
\newcommand{\N}{\mathbb{N}}
\renewcommand{\S}{\mathcal{S}}
\newcommand{\vep}{\varepsilon}
\newcommand{\w}{\omega}
\newcommand{\supp}{\operatorname{supp}}
\newcommand{\dr}{(D_{\mathrm{right}})^{\alpha}}
\newcommand{\dl}{(D_{\mathrm{left}})^{\alpha}}
\newcommand{\norm}[1]{\left\Vert#1\right\Vert}
\newcommand{\abs}[1]{\left\vert#1\right\vert}
\renewcommand{\l}{\mathrm{left}}
\renewcommand{\r}{\mathrm{right}}
\def\({\left(}
\def\){\right)}
\newtheorem{thm}{Theorem}[section]
\newtheorem{prop}[thm]{Proposition}
\newtheorem{lem}[thm]{Lemma}
\theoremstyle{definition}
\newtheorem{defn}[thm]{Definition}
\newtheorem{rem}[thm]{Remark}
\numberwithin{equation}{section}
\author[P. R. Stinga]{Pablo Ra\'ul Stinga}
    \address{Department of Mathematics\\
    Iowa State University\\
    396 Carver Hall, Ames, IA 50011, USA}
    \email{stinga@iastate.edu,~maryo@iastate.edu}
\author[M. Vaughan]{Mary Vaughan}
\keywords{Marchaud fractional derivative, one-sided weights, weighted Sobolev space, maximal operator,
fractional Laplacian}
\subjclass[2010]{Primary: 26A24,26A33,46E35. Secondary: 35R11,42B25,46E30}
\thanks{Research supported by Simons Foundation grant 580911 (PRS).
The first author was partially supported by grant MTM2015-66157-C2-1-P (MINECO/FEDER) from Government of Spain}
\begin{document}

\title[Fractional derivatives, fractional Laplacians, and Sobolev spaces]{One-sided fractional derivatives, \\
fractional Laplacians, and weighted Sobolev spaces}

\begin{abstract}
We characterize one-sided weighted Sobolev spaces $W^{1,p}(\R,\w)$, where $\w$
is a one-sided Sawyer weight, in terms of a.e.~and weighted $L^p$ limits as $\alpha\to1^-$ of
Marchaud fractional derivatives of order $\alpha$.
Similar results for weighted Sobolev spaces $W^{2,p}(\R^n,\nu)$, where
$\nu$ is an $A_p$-Muckenhoupt weight, are proved in terms of limits as $s\to1^-$ of fractional Laplacians $(-\Delta)^s$.
These are Bourgain--Brezis--Mironescu-type characterizations for weighted Sobolev spaces.
We also complement their work by studying a.e.~and weighted $L^p$ limits as $\alpha,s\to0^+$.
\end{abstract}

\maketitle

\vspace{-.15in}

\section{Introduction and main results}

G.~Leibniz introduced the notation
$$\frac{d^n}{dt^n}u(t)$$
for derivatives of integer order $n\geq1$ of a function $u=u(t):\R\to\R$.
In 1695, G.~L'H\"opital posed Leibniz the question:
\begin{center}
\textit{What if $n=1/2$?}
\end{center}
Since then, many ``derivatives of fractional order'' have been defined. Historical names
are Lacroix, Fourier, Liouville, Riemann, Riesz, Weyl and, more recently, Chapman, Marchaud, Caputo, Jumarie,
Gr\"unwald and Letnikov, among others, see for instance \cite{Samko}. In our opinion, any reasonable definition of
derivative $D^\alpha$ of fractional order $0<\alpha<1$ should at least satisfy the relations
$D^\alpha[D^\beta u](t)=D^{\alpha+\beta} u(t)$,
$$\lim_{\alpha\to1^-}D^\alpha u(t)=u'(t)\quad\hbox{and}\quad\lim_{\alpha\to0^+}D^\alpha u(t)=u(t)$$
whenever $u$ is a sufficiently smooth function.

By looking at the various definitions of fractional derivatives \cite{Samko}, one notices
that most of them have a \textit{one-sided} nature. For example, the Marchaud left fractional derivative, given by
\begin{equation}\label{eq:Dleftalpha}
(D_\l)^\alpha u(t)=\frac{1}{\Gamma(-\alpha)}\int_{-\infty}^t\frac{u(\tau)-u(t)}{(t-\tau)^{1+\alpha}}\,d\tau
\end{equation}
where $\Gamma$ denotes the Gamma function,
takes into account the values of $u$ to the left of $t$ (the \textit{past}). Similarly, the Marchaud right fractional derivative
\begin{equation}\label{eq:Drightalpha}
(D_\r)^\alpha u(t)=\frac{1}{\Gamma(-\alpha)}\int_t^{\infty}\frac{u(\tau)-u(t)}{(\tau-t)^{1+\alpha}}\,d\tau
\end{equation}
looks at $u$ only to the right of $t$ (the \textit{future}).
These were first introduced by 
Andr\'e Marchaud in his 1927 dissertation \cite{Marchaud}
(see also, for example, \cite{Allen,Allen-Caffarelli-Vasseur1,Allen-Caffarelli-Vasseur2,Bernardis-et-al,Samko}
for theory and applications).
It is clear that if $u$ is a Schwartz class function, then
$$\lim_{\alpha\to1^-}(D_\l)^\alpha u(t)=u'(t)\qquad\hbox{and}\quad \lim_{\alpha\to0^+}(D_\l)^\alpha u(t)=u(t).$$

In this paper, we study characterizations of Sobolev spaces by limits of fractional derivatives 
in the almost everywhere and $L^p$ senses. Of course, an obvious
class of functions $u$ to work with is the classical Sobolev space $W^{1,p}(\R)$.
Instead, given the one-sided structure of fractional derivatives, we believe that a more natural,
general class of functions to consider is the weighted Sobolev space $W^{1,p}(\R,\w)$, $1\leq p<\infty$,
but where $\w$ is now a \textit{one-sided} Sawyer weight in $A_p^-(\R)$
(for left-sided fractional derivatives) or in $A_p^+(\R)$ (for right-sided fractional derivatives).
These spaces are defined as
$$W^{1,p}(\R,\w)=\big\{u\in L^p(\R,\w):u'\in L^p(\R,\w)\big\}$$
with the norm
$$\|u\|_{W^{1,p}(\R,\w)}^p=\|u\|_{L^p(\R,\w)}^p+\|u'\|_{L^p(\R,\w)}^p$$
for $1\leq p<\infty$. The Sawyer weights $\w\in A_p^-(\R)$ are the good weights 
for the original one-sided Hardy--Littlewood maximal function \cite[p.~92]{Hardy-Littlewood}:
$$M^-u(t)=\sup_{h>0}\frac{1}{h}\int_{t-h}^t|u(\tau)|\,d\tau.$$
Indeed, $M^-$ is bounded in $L^p(\R,\w)$ if and only if $\w\in A_p^-(\R)$, $1<p<\infty$,
see \cite{Sawyer}, and $M^-$ is bounded from $L^1(\R,\w)$ into weak-$L^1(\R,\w)$ if and only if
$\w\in A_1^-(\R)$, see \cite{Martin-Reyes-et-al}.
 It is clear that $A_p^-(\R)$ is a larger family than the classical
class of Muckenhoupt weights $A_p(\R)$. 
In particular, any decreasing function is in $A_p^-(\R)$, but there
are decreasing functions that are not in $A_p(\R)$.
For instance, $\w(t) = e^{-t}$ belongs to $A_p^-(\R)$ but not to $A_p(\R)$ because it is not a doubling weight.
Similar considerations hold for right-sided weights in $A_p^+(\R)$.
See Section \ref{Section:PreliminariesDerivatives} for more details.

We find appropriate one-sided distributional spaces in which fractional derivatives have sense.
Then we show that in such a setting one can always define $(D_\l)^\alpha u$
as a distribution for any function $u\in L^p(\R,\w)$, $\w\in A_p^-(\R)$. 
It turns out then that our weighted Sobolev spaces can be characterized by
limits of one-sided left fractional derivatives.

\begin{thm}[$W^{1,p}(\R,\w)$ and limits of left fractional derivatives]\label{thm:derivative1}
Let $u \in L^p(\R,\w)$, where $\w \in A_p^-(\R)$, for $1\leq p<\infty$.
\begin{enumerate}[$(a)$]
\item If $u \in W^{1,p}(\R,\w)$, then the distribution $(D_\l)^\alpha u$ coincides with a function in $L^p(\R,\w)$ and 
\begin{equation}\label{eq:formulaDleftalpha}
\dl u(t) = \frac{1}{\Gamma(-\alpha)}\int_{-\infty}^t\frac{u(\tau)-u(t)}{(t-\tau)^{1+\alpha}}\,d\tau \quad\hbox{for a.e.}~t\in\R
\end{equation}
with
\begin{equation}\label{eq:LpestimateDleftalpha}
\|\dl u\|_{L^p(\R,\w)} \leq C_{p,\w}\big( \|u\|_{L^p(\R,\w)} + \|u'\|_{L^p(\R,\w)}\big)
\end{equation}
for some constant $C_{p,\w}>0$. Moreover,
\begin{equation}\label{eq:limitalpha}
\lim_{\alpha\to1^-}\dl u=u'\quad\hbox{in}~L^p(\R,\w)~\hbox{and a.e.~in}~\R
\end{equation}
and
\begin{equation}\label{eq:limitalpha2}
\lim_{\alpha\to0^+}\dl u=u\quad\hbox{a.e.~in}~\R.
\end{equation}
Furthermore, the limit in \eqref{eq:limitalpha2} holds also in $L^p(\R,\w)$ when $1 < p < \infty$, and in $L^{1}(\R, \w)$
when $p=1$ and $M^-u,M^-u'\in L^1(\R,\w)$.
\item Conversely, suppose that $\dl u \in L^p(\R,\w)$ and that $\dl u$ converges in $L^p(\R,\w)$
as $\alpha \to 1^-$. Then $u \in W^{1,p}(\R,\w)$ and \eqref{eq:limitalpha} holds.
\item Alternatively, suppose that $\dl u \in L^p(\R,\w)$ and that $\dl u$ converges in $L^p(\R,\w)$
as $\alpha\to 0^+$. Then \eqref{eq:limitalpha2} holds and, as a consequence, $\dl u\to u$ in $L^p(\R,\w)$ as $\alpha\to0^+$.
\end{enumerate}
\end{thm}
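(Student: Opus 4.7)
For $u\in C_c^\infty(\R)$, Fubini applied to $u(\tau)-u(t)=-\int_\tau^t u'(s)\,ds$ yields the Weyl-form representation
\begin{equation*}
\dl u(t) = \frac{1}{\Gamma(1-\alpha)}\int_{-\infty}^t \frac{u'(s)}{(t-s)^\alpha}\,ds.
\end{equation*}
First I would establish this identity for test functions and then extend \eqref{eq:formulaDleftalpha} to $u\in W^{1,p}(\R,\w)$ by density of $C_c^\infty$ in the weighted Sobolev space, verifying along the way that the Marchaud integral converges absolutely a.e.~and represents the distribution $\dl u$. For the pointwise bound behind \eqref{eq:LpestimateDleftalpha}, split the Marchaud integral at $\tau=t-1$: on $(t-1,t)$ use $|u(\tau)-u(t)|\leq\int_\tau^t|u'|$ together with a dyadic decomposition of the kernel $(t-\tau)^{-1-\alpha}$ to dominate this piece by $M^-u'(t)$; on $(-\infty,t-1)$ use $|u(\tau)-u(t)|\leq|u(\tau)|+|u(t)|$ and another dyadic decomposition to extract $M^-u(t)+|u(t)|$. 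The resulting pointwise bound
\begin{equation*}
|\dl u(t)|\leq C_\alpha\bigl(M^-u(t)+|u(t)|+M^-u'(t)\bigr),
\end{equation*}
combined with Sawyer's theorem on the $L^p(\R,\w)$ boundedness of $M^-$ for $\w\in A_p^-$ (with the corresponding weak-$L^1$ endpoint), delivers \eqref{eq:LpestimateDleftalpha}.

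\textbf{Plan for the limits in (a).} After the change of variables $r=t-s$ the Weyl representation reads
\begin{equation*}
\dl u(t) = \frac{1}{\Gamma(1-\alpha)}\int_0^\infty \frac{u'(t-r)}{r^\alpha}\,dr.
\end{equation*}
For $u\in C_c^\infty(\R)$ and $\alpha\to 1^-$, the outer piece $r>1$ vanishes since $1/\Gamma(1-\alpha)\to 0$, while on $(0,1)$ the kernel $r^{-\alpha}/\Gamma(1-\alpha)$ has total mass $1/\Gamma(2-\alpha)\to 1$ and concentrates at $r=0$, so $\dl u(t)\to u'(t)$ uniformly and hence in $L^p(\R,\w)$. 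For $\alpha\to 0^+$, an additional integration by parts transferring the derivative back onto $u$ yields $\dl u(t)=u(t)+R_\alpha(t)$ with $R_\alpha\to 0$ for smooth $u$. To pass from the smooth case to general $u\in W^{1,p}(\R,\w)$, I would combine density of $C_c^\infty$ with a uniform-in-$\alpha$ version of the $L^p$ bound above, valid on a one-sided neighborhood of the endpoint of interest, via a standard $3\vep$ argument. The a.e.~convergence follows from the pointwise majorant $M^-u+|u|+M^-u'$ and a Lebesgue-differentiation-type argument; the extra hypothesis $M^-u,M^-u'\in L^1(\R,\w)$ for $p=1$, $\alpha\to 0^+$ is exactly what supplies the dominating function needed for dominated convergence.

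\textbf{Plan for (b) and (c), and main obstacle.} Both converses are driven by the duality
\begin{equation*}
\int_\R \dl u\cdot\phi\,dt = \int_\R u\cdot\dr\phi\,dt, \qquad \phi\in C_c^\infty(\R),
\end{equation*}
which follows from Fubini for smooth $u$ and extends by approximation. In (b), letting $v\in L^p(\R,\w)$ be the $L^p$-limit of $\dl u$ and using that $\dr\phi\to-\phi'$ uniformly as $\alpha\to 1^-$, one obtains $\int v\phi\,dt=-\int u\phi'\,dt$, so $u'=v$ distributionally; hence $u\in W^{1,p}(\R,\w)$ and \eqref{eq:limitalpha} is inherited from part (a). Part (c) is analogous, using $\dr\phi\to\phi$ as $\alpha\to 0^+$. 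The main obstacle throughout is the potentially non-doubling nature of Sawyer weights in $A_p^-\setminus A_p$: classical two-sided tools (symmetric mollifiers, two-sided Lebesgue differentiation, the full $A_p$-weighted dominated convergence machinery) are unavailable, so every step must be carried out with one-sided analogues. In tandem with this, the $\alpha$-uniform operator bound required for the density argument must be extracted carefully, since the factors $1/\Gamma(-\alpha)$, $1/\alpha$ and $1/(1-\alpha)$ that naturally appear in the splits all degenerate at the endpoints of $(0,1)$ and must be shown to cancel against the corresponding kernel mass.
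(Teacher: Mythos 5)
Your proposal is correct and follows essentially the same route as the paper: an $\alpha$-uniform pointwise domination of $\dl u$ by $M^-(u')+M^-u$ (the paper's Theorem \ref{Prop:lp max}, proved there via Lorente's convolution lemma rather than your dyadic decomposition, but with the same split at unit scale and the same cancellation of the degenerating factors against $1/\Gamma(-\alpha)$), density of $C^\infty_c(\R)$ in $W^{1,p}(\R,\w)$, closedness of the a.e.-convergence set under the weak-type bound for the maximal operator in $\alpha$, and duality against $\dr\varphi$ in the one-sided distributional framework for the converses. The only cosmetic difference is that for smooth functions you pass through the Weyl form $\Gamma(1-\alpha)^{-1}\int_0^\infty u'(t-r)\,r^{-\alpha}\,dr$ and treat its truncated kernel as an approximate identity as $\alpha\to 1^-$, whereas the paper stays with the Marchaud form and expands to second order in $u''$; both give the required convergence on the dense class.
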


Though we established Theorem \ref{thm:derivative1} for the left fractional derivative,
all the arguments carry on by replacing $D_\l$ by $D_\r$ and $A_p^-(\R)$ by $A_p^+(\R)$. Hence, for the rest of the paper,
we will only consider the case of $D_\l$ and left-sided Sawyer weights.

The one-sided $L^p(\R,\w)$ spaces, with $\w\in A_p^-(\R)$, are also natural for the Marchaud
left fractional derivative
in the sense of the Fundamental Theorem of Fractional Calculus. Indeed, let $u\in L^p(\R,\w)$ and
consider the left-sided Weyl fractional integral \cite{Samko}
$$(D_\l)^{-\alpha}u(t)=\frac{1}{\Gamma(\alpha)}\int_{-\infty}^t\frac{u(\tau)}{(t-\tau)^{1-\alpha}}\,d\tau.$$
It was proved in \cite{Bernardis-et-al}
that $(D_\l)^\alpha(D_\l)^{-\alpha}u(t)=u(t)$ in $L^p(\R,\w)$ and for a.e.~$t\in\R$, for any $0<\alpha<1$.
Our Theorem \ref{thm:derivative1} complements this result.

The second question we address in this paper is the almost everywhere and $L^p$ characterization of weighted
Sobolev spaces by the limits
$$\lim_{s\to1^-}(-\Delta)^su=-\Delta u\quad\hbox{and}\quad\lim_{s\to0^+}(-\Delta)^su=u$$
where $(-\Delta)^s$ is the fractional Laplacian of order $0<s<1$ on $\R^n$, $n\geq1$.
Both limits hold whenever $u$ is a Schwartz class function.
Up to the best of our knowledge,
they have not been studied for the case of weighted $L^p$ spaces. We will consider the weighted
Sobolev space $W^{2,p}(\R^n,\nu)$ defined by
$$W^{2,p}(\R^n,\nu)=\big\{u\in L^p(\R^n,\nu):\nabla u,D^2u\in L^p(\R^n,\nu)\big\}$$
with the norm
$$\|u\|_{W^{2,p}(\R^n,\nu)}^p=\|u\|_{L^p(\R^n,\nu)}^p+\|\nabla u\|_{L^p(\R^n,\nu)}^p+\|D^2u\|_{L^p(\R^n,\nu)}^p$$
where $\nu$ is a weight in the Muckenhoupt class $A_p(\R^n)$
(see Section \ref{Section:prelim Laplacian}), for
$1\leq p<\infty$.
We recall that the $A_p(\R^n)$ Muckenhoupt weights are the good weights for the classical Hardy--Littlewood
maximal function $M$ on $\R^n$. In the following statement, $\{e^{t\Delta}\}_{t\geq0}$ denotes the 
heat semigroup generated by the Laplacian on $\R^n$.

\begin{thm}[$W^{2,p}(\R^n,\nu)$ and limits of fractional Laplacians]\label{thm:laplacian1}
Let $u\in L^p(\R^n,\nu)$, where $\nu \in A_p(\R^n)$, for $1\leq p<\infty$.
\begin{enumerate}[$(a)$]
\item If $u \in W^{2,p}(\R^n,\nu)$, then the distribution $(-\Delta)^s u$ coincides with a function in $L^p(\R^n,\nu)$ and 
\begin{equation} \label{eq:laplacian semigroup}
(-\Delta)^su(x) = \frac{1}{\Gamma(-s)} \int_0^{\infty} \( e^{t \Delta} u(x) - u(x)\) \frac{dt}{t^{1+s}} \quad
\hbox{for a.e.}~x\in\R^n.
\end{equation}
In addition,
\begin{equation} \label{eq:laplacian pw}
(-\Delta)^su(x)= c_{n,s} \lim_{\varepsilon \to 0} \int_{\abs{x-y}> \varepsilon} \frac{u(x) -u(y)}{\abs{x-y}^{n+2s}}\, dy
\quad\hbox{for a.e.}~x\in\R^n~\hbox{and in}~L^p(\R^n,\nu)
\end{equation}
with
\begin{equation} \label{eq:lpestimatelaplacian}
\norm{(-\Delta)^s u}_{L^p(\R^n,\nu)} \leq C_{n,p,\nu} \big(\norm{u}_{L^p(\R^n,\nu)} + \norm{\Delta u}_{L^p(\R^n,\nu)} \big)
\end{equation}
for some constant $C_{n,p,\nu}>0$. Moreover,
\begin{equation}\label{eq:limits}
\lim_{s\to1^-}(-\Delta)^su=-\Delta u\quad\hbox{in}~L^p(\R^n,\nu)~\hbox{and a.e.~in}~\R^n
\end{equation}
and
\begin{equation}\label{eq:limits2}
\lim_{s\to0^+}(-\Delta)^su=u\quad\hbox{a.e.~in}~\R^n.
\end{equation}
Furthermore, the limit in \eqref{eq:limits2} holds also in $L^p(\R^n,\nu)$ when $1 < p < \infty$, and in
$L^{1}(\R^n, \nu)$ when $p=1$ and $Mu,M(D^2u)\in L^1(\R^n,\nu)$.
\item Conversely, suppose that $(-\Delta)^s u \in L^p(\R^n,\nu)$ and that $(-\Delta)^s u$ converges in $L^p(\R^n,\nu)$ as $s \to 1^-$.
If $1<p<\infty$ then $u \in W^{2,p}(\R^n,\nu)$ and \eqref{eq:limits} holds. If $p=1$, then $D^2u\in$ \emph{weak}-$L^1(\R^n,\nu)$.
\item Alternatively, suppose that $(-\Delta)^su \in L^p(\R^n,\nu)$ and that $(-\Delta)^su$ converges in $L^p(\R^n,\nu)$
as $s\to 0^+$. Then \eqref{eq:limits2} holds and, as a consequence, $(-\Delta)^su\to u$ in $L^p(\R^n,\nu)$ as $s\to0^+$.
\end{enumerate}
\end{thm}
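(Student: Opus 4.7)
The plan is to take the semigroup formula \eqref{eq:laplacian semigroup} as the starting point; for $u\in\S(\R^n)$ it is immediate from the Fourier transform, since both sides equal $|\xi|^{2s}\hat u(\xi)$. The first step is to establish the pointwise bound
\[
|(-\Delta)^s u(x)| \le C_{n,s}\bigl(Mu(x) + M(\Delta u)(x)\bigr)
\]
for Schwartz $u$, by splitting the integral at $t=1$: on $(0,1)$ I would use $e^{t\Delta}u(x) - u(x) = \int_0^t e^{\tau\Delta}\Delta u(x)\,d\tau$ together with $|e^{\tau\Delta}g(x)|\le C Mg(x)$ to obtain $|e^{t\Delta}u(x)-u(x)|\le Ct\, M(\Delta u)(x)$, while on $[1,\infty)$ the bound $|e^{t\Delta}u(x)|\le CMu(x)$ is enough. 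The weighted $L^p(\R^n,\nu)$ estimate \eqref{eq:lpestimatelaplacian} then follows from the $A_p$-boundedness of $M$, and the identity extends from $\S(\R^n)$ to all $u\in W^{2,p}(\R^n,\nu)$ by density.

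To pass from \eqref{eq:laplacian semigroup} to the principal value formula \eqref{eq:laplacian pw}, substitute the heat kernel $p_t(z) = (4\pi t)^{-n/2}e^{-|z|^2/(4t)}$, interchange the order of integration (justified by the estimate above), and compute $\int_0^\infty p_t(z)\,dt/t^{1+s} = c_{n,s}|z|^{-n-2s}$ via a Gamma function identity. For the limit \eqref{eq:limits}, I would integrate by parts in $t$ in \eqref{eq:laplacian semigroup} to rewrite
\[
(-\Delta)^s u(x) = -\frac{1}{\Gamma(1-s)}\int_0^\infty e^{t\Delta}(\Delta u)(x)\,\frac{dt}{t^s},
\]
split the integral at $t=1$, and isolate the singular piece
\[
\int_0^1 e^{t\Delta}(\Delta u)\,\frac{dt}{t^s} = \int_0^1\bigl(e^{t\Delta}(\Delta u) - \Delta u\bigr)\frac{dt}{t^s} + \frac{\Delta u}{1-s}.
\]
The resulting leading term $-\Delta u/\Gamma(2-s)$ tends to $-\Delta u$ as $s\to 1^-$, while the remaining integrals carry the vanishing factor $1/\Gamma(1-s)\to 0$ against integrands uniformly dominated by $Mu + M(\Delta u)$; dominated convergence then yields the a.e.\ and $L^p(\R^n,\nu)$ limit. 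For \eqref{eq:limits2}, use $1/\Gamma(-s) = -s/\Gamma(1-s)\sim -s$ as $s\to 0^+$: the $(0,1)$-piece of \eqref{eq:laplacian semigroup} is $O(1)$ (controlled by $M(\Delta u)$) and so contributes $O(s)\to 0$, while the $[1,\infty)$-piece contains the explicit term $-u\int_1^\infty dt/t^{1+s} = -u/s$, which after multiplication by $-s/\Gamma(1-s)$ yields $u/\Gamma(1-s)\to u$; the remaining pieces vanish. In the $L^1$ case the maximal operator is only weak $(1,1)$, which is precisely why one must assume $Mu,M(D^2u)\in L^1(\R^n,\nu)$ to upgrade the a.e.\ limits to $L^1(\R^n,\nu)$ limits.

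For part (b), the plan is to test the $L^p(\R^n,\nu)$-limit against $\varphi\in\S(\R^n)$ through the self-adjoint pairing $\langle (-\Delta)^s u,\varphi\rangle = \langle u,(-\Delta)^s\varphi\rangle$; applying part (a) to $\varphi$, the right-hand side tends to $\langle u,-\Delta\varphi\rangle$, so the $L^p(\R^n,\nu)$-limit $v$ of $(-\Delta)^s u$ equals $-\Delta u$ in $\S'(\R^n)$. Weighted Calder\'on--Zygmund theory (boundedness of the second-order Riesz transforms $R_iR_j$ on $L^p(\R^n,\nu)$ for $1<p<\infty$, and weak-$(1,1)$ at $p=1$) then upgrades this identity to $D^2u\in L^p(\R^n,\nu)$ (respectively, weak-$L^1(\R^n,\nu)$), completing (b). Part (c) is analogous: passing $s\to 0^+$ inside the same pairing and invoking part (a) for $\varphi$ gives $\langle u,(-\Delta)^s\varphi\rangle\to\langle u,\varphi\rangle$, which forces the $L^p(\R^n,\nu)$-limit of $(-\Delta)^s u$ to equal $u$.

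The main technical obstacle is tracking the Gamma-function asymptotics at the two endpoints $s=1^-$ and $s=0^+$ against integrals whose mass itself diverges in a precisely compensating way, in order to rigorously interchange the limit and integration in the weighted $L^p$ norm. The endpoint $p=1$ is the most delicate, since the natural maximal-function dominations provide only weak-$L^1$ control; this is what forces the extra hypothesis $Mu,M(D^2u)\in L^1(\R^n,\nu)$ for the $L^1$ conclusion in (a) and restricts (b) to a weak-$L^1(\R^n,\nu)$ statement at $p=1$.
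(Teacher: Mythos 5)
Your overall architecture matches the paper's: the semigroup formula as the basic representation, a split of the $t$-integral at $t=1$ with maximal-function domination, Gamma-function asymptotics at the two endpoints, and for parts $(b)$ and $(c)$ the duality pairing $\langle(-\Delta)^su,\varphi\rangle=\langle u,(-\Delta)^s\varphi\rangle$ followed by weighted Calder\'on--Zygmund estimates. However, there are concrete gaps at both endpoints. First, for $p=1$ the estimate \eqref{eq:lpestimatelaplacian} cannot be deduced from your pointwise bound $|(-\Delta)^su|\leq C(Mu+M(\Delta u))$ together with ``$A_p$-boundedness of $M$'': for $\nu\in A_1(\R^n)$ the maximal operator is only of weak type $(1,1)$ on $L^1(\R^n,\nu)$, so this route yields at best a weak-$L^1$ bound, while the theorem asserts a strong $L^1(\R^n,\nu)$ bound. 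The paper instead estimates $\norm{e^{t\Delta}u}_{L^1(\R^n,\nu)}$ by Fubini, using $\int_{\R^n}W_t(x-y)\nu(x)\,dx\leq M\nu(y)\leq C\nu(y)$ --- i.e.\ the $A_1$ condition applied to the \emph{weight}, not a maximal bound on $u$. You acknowledge the weak-$(1,1)$ issue only in connection with the $L^1$ limits, but it already affects the basic estimate.

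Second, in your $s\to0^+$ computation the claim that ``the remaining pieces vanish'' fails for the term $\frac{1}{\Gamma(-s)}\int_1^\infty e^{t\Delta}u(x)\,t^{-1-s}\,dt$: its absolute value is only bounded by $\frac{1}{|\Gamma(-s)|}\cdot\frac{Mu(x)}{s}=\frac{Mu(x)}{\Gamma(1-s)}\to Mu(x)$, since the compensating factor $s$ in $1/|\Gamma(-s)|=s/\Gamma(1-s)$ is exactly consumed by $\int_1^\infty t^{-1-s}\,dt=1/s$. Making this piece vanish requires showing $e^{t\Delta}u(x)\to0$ as $t\to\infty$ (at least in an Abel-averaged sense) for general $u\in L^p(\R^n,\nu)$, a nontrivial extra step you do not address; the paper avoids it entirely by proving the limit for $u\in C^\infty_c(\R^n)$ and upgrading to all of $W^{2,p}(\R^n,\nu)$ via the uniform-in-$s$ maximal bound of Theorem \ref{prop:Mbound}, a closed-set/density argument for the a.e.\ statement, and then dominated convergence for the $L^p$ statement. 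A similar caveat applies at $s\to1^-$: the vanishing factor $1/\Gamma(1-s)$ is exactly cancelled by $\int_0^1 t^{-s}\,dt=1/(1-s)$, so ``dominated convergence against uniformly dominated integrands'' is not sufficient; one must invoke the strong continuity $\norm{e^{t\Delta}\Delta u-\Delta u}_{L^p(\R^n,\nu)}\to0$ as $t\to0^+$ and run an $\varepsilon$--$\delta$ splitting of the integral near $t=0$, which is precisely the paper's Step 6.
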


Our Theorems \ref{thm:derivative1} and \ref{thm:laplacian1} are rather nontrivial, nonisotropic weighted versions of the
famous results by Bourgain--Brezis--Mironescu \cite{Bourgain-Brezis-Mironescu}. Indeed,
\cite{Bourgain-Brezis-Mironescu} gives a characterization of $W^{1,p}(\Omega)$, $\Omega\subseteq\R^n$,
in terms of the limit as $s\to1^-$ of fractional Gagliardo seminorms, namely, the seminorms
of the fractional Sobolev spaces $W^{s,p}(\Omega)$. 
Other authors have considered similar questions for abstract versions of such seminorms, see for example
\cite{DiMarino-Squassina,Vincent}. In particular, they apply to Ahlfors-regular metric spaces.
On the other hand,
a weighted Gagliardo-type fractional seminorm with power weights was defined in \cite{Dipierro-Valdinoci}.
Nevertheless, neither are our weighted spaces Ahlfors-regular nor do our seminorms $\|(D_\l)^\alpha u\|_{L^p(\R,\w)}$
and $\|(-\Delta)^su\|_{L^p(\R^n,\nu)}$ correspond to those in \cite{Dipierro-Valdinoci}, even for power weights.
An added difficulty we need to overcome in our case is the lack of translation invariance of the one-sided
weighted $L^p(\R,\w)$ spaces.
Moreover, since constants are not in our weighted Sobolev spaces,
we are able to complement \cite{Bourgain-Brezis-Mironescu} by studying limits as $\alpha,s\to0^+$.

In general, statements involving a.e.~convergence are proved by considering the underlying maximal operators,
see, for example, \cite[Chapter~2]{Duo}. One of the novelties of our paper is that we are able to deduce the
pointwise inequalities
\begin{equation}\label{eq:Ma}
\sup_{0<\alpha<1}|(D_\l)^\alpha u(t)|\leq C\big(M^-(u')(t)+M^-u(t)\big)\quad\hbox{for any}~u\in W^{1,p}(\R,\omega)
\end{equation}
and
\begin{equation}\label{eq:Ms}
\sup_{0<s<1}|(-\Delta)^su(x)|\leq C_n\big(M(D^2u)(x)+Mu(x)\big)\quad\hbox{for any}~u\in W^{2,p}(\R^n,\nu),
\end{equation}
see Theorems \ref{Prop:lp max} and \ref{prop:Mbound}, respectively. The constant $C>0$ in \eqref{eq:Ma}
is universal while $C_n>0$ in \eqref{eq:Ms} depends only on dimension.
Notice that the maximal operators are taken with respect to the orders of the fractional derivative
and the fractional Laplacian, respectively. We believe these estimates are of independent interest.

The paper is organized as follows. Section \ref{Section:PreliminariesDerivatives} contains preliminary
results on one-sided Sawyer weights, the new distributional setting for one-sided fractional derivatives,
and the proof of the maximal estimate \eqref{eq:Ma}.
Theorem \ref{thm:derivative1} is proved in Section \ref{Section:proofs derivatives}.
The fractional Laplacian in weighted Lebesgue spaces is studied in detail in Section \ref{Section:prelim Laplacian},
where we also show the maximal estimate \eqref{eq:Ms}.
Finally, Section \ref{Section:proofs Laplacians} contains the proof of Theorem \ref{thm:laplacian1}.

Along the paper, we denote by $\mathcal{S}(\R^n)$ the class of Schwartz functions on $\R^n$.
We always take $0<\alpha,s<1$. We will use the following inequality:
for any fixed $\rho>0$ there exists $C_\rho>0$ such that, for every $r>0$,
\begin{equation}\label{eq:exponential}
e^{-r}r^{\rho} \leq C_{\rho} e^{-r/2}.
\end{equation}

For a measure space $(X, \mu)$, we define the space weak-$L^1(X, \mu)$ as the set of measurable functions
$u:X\to\R$ such that the quasi-norm $\norm{\cdot}_{\textrm{weak-}L^1(X, \mu)}$, defined by
$$\norm{u}_{\textrm{weak-}L^1(X, \mu)} = \sup_{\lambda >0}  \lambda \mu( \{ x \in X : \abs{u(x)} > \lambda\}),$$
is finite.

\section{Fractional derivatives and one-sided spaces}\label{Section:PreliminariesDerivatives}

Let $u=u(t)\in\S(\R)$ and define
$$D_\l u(t)=\lim_{\tau\to0^+}\frac{u(t)-u(t-\tau)}{\tau}\quad\hbox{and}\quad
D_\r u(t)=\lim_{\tau\to0^+}\frac{u(t)-u(t+\tau)}{\tau}.$$
Observe that $D_\l u=-D_\r u=u'$. From the Fourier transform identities
$$\widehat{D_\l u}(\xi)=(i\xi)\widehat{u}(\xi)\quad\hbox{and}\quad\widehat{D_\r u}(\xi)=(-i\xi)\widehat{u}(\xi),$$
one can define 
\begin{equation}\label{eq:Fourier}
\widehat{(D_\l)^\alpha u}(\xi)=(i\xi)^\alpha\widehat{u}(\xi)\quad\hbox{and}\quad
\widehat{(D_\r)^\alpha u}(\xi)=(-i\xi)^\alpha\widehat{u}(\xi).
\end{equation}
Using the semigroup of translations, it is shown in \cite{Bernardis-et-al}, see also \cite{Samko}, that $(D_\l)^\alpha u(t)$
and $(D_\r)^\alpha u(t)$ are given by the pointwise formulas in \eqref{eq:Dleftalpha}
and \eqref{eq:Drightalpha}, respectively.

\subsection{Distributional setting}

If $u,\varphi\in\S(\R)$, then
$$\int_{-\infty}^\infty \dl u \, \varphi \, dt = \int_{-\infty}^\infty u \, \dr \varphi \, dt.$$
We will use this identity to define $\dl u$ in the sense of distributions. Notice that
if $u\in\S'(\R)$, then a natural definition would be
$$(\dl u)(\varphi) = u \( \dr \varphi \).$$
Nevertheless, it is straightforward from \eqref{eq:Fourier} to see that, in general,
$\dr \varphi \notin \S(\R)$, so we need to consider a different space of test functions and distributions.

We define the class
$$\S_-= \left\{ \varphi \in \S(\R) : \supp \varphi \subset (-\infty,A],~\hbox{for some}~A\in\R\right\}.$$
We denote by $\S_-^{\alpha}$ the set of functions
$$\varphi \in C^{\infty} (\R)\,\,\,\hbox{such that}~\supp\varphi \subset (-\infty, A]~\hbox{and}~\Big|\frac{d^k}{dt^k}\varphi(t)\Big|
\leq \frac{C}{1 + \abs{t}^{1+\alpha}}$$
for all $k\geq0$, for some $A\in\R$ and $C>0$.

\begin{lem}
If $\varphi\in\S_-$ then $(D_\r)^\alpha\varphi\in\S_-^{\alpha}$.
\end{lem}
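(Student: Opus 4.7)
The plan is to check three things about $\psi:=(D_\r)^\alpha\varphi$: that $\supp\psi\subset(-\infty,A]$, that $\psi$ is $C^\infty$ on $\R$, and that every derivative satisfies the decay bound in the definition of $\S_-^\alpha$. The support claim is immediate from~\eqref{eq:Drightalpha}: if $t>A$ then $\varphi(t)=0$ and $\varphi(\tau)=0$ for all $\tau\geq t$, so the integrand vanishes identically and $\psi(t)=0$.

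For smoothness I would use the substitution $\sigma=\tau-t$ to write
$$\psi(t)=\frac{1}{\Gamma(-\alpha)}\int_0^\infty\frac{\varphi(t+\sigma)-\varphi(t)}{\sigma^{1+\alpha}}\,d\sigma,$$
and observe that the Taylor bound $|\varphi(t+\sigma)-\varphi(t)|\leq\sigma\|\varphi'\|_\infty$ for $\sigma\in(0,1)$ together with $|\varphi(t+\sigma)-\varphi(t)|\leq 2\|\varphi\|_\infty$ for $\sigma\geq 1$ provides an integrable dominant uniform in $t$ on compact sets; the same estimates apply to every $t$-derivative of the integrand, since $\varphi\in\S(\R)$. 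Differentiating under the integral then gives $\psi\in C^\infty(\R)$ with $\psi^{(k)}(t)=(D_\r)^\alpha\varphi^{(k)}(t)$ for every $k\geq0$. Since $\varphi^{(k)}\in\S_-$ whenever $\varphi\in\S_-$, it suffices to establish the decay estimate for $\psi$ itself.

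The main obstacle is the decay $|\psi(t)|\leq C/(1+|t|^{1+\alpha})$ as $t\to-\infty$; for $t$ in a bounded set, continuity of $\psi$ together with $\supp\psi\subset(-\infty,A]$ already yields such a bound with a possibly larger constant. I would split
$$\Gamma(-\alpha)\psi(t)=\int_t^{t+1}\frac{\varphi(\tau)-\varphi(t)}{(\tau-t)^{1+\alpha}}\,d\tau-\frac{\varphi(t)}{\alpha}+\int_{t+1}^\infty\frac{\varphi(\tau)}{(\tau-t)^{1+\alpha}}\,d\tau=:I_1(t)-\tfrac{\varphi(t)}{\alpha}+I_2(t).$$
For $I_1$, the bound $|\varphi(\tau)-\varphi(t)|\leq(\tau-t)\sup_{[t,t+1]}|\varphi'|$ together with the Schwartz decay of $\varphi'$ yields decay faster than any polynomial, and the middle term $\varphi(t)/\alpha$ enjoys super-polynomial decay because $\varphi\in\S(\R)$. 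The delicate piece is $I_2$, which I would split at $\tau=t/2$: on $[t+1,t/2]$, the Schwartz bound $|\varphi(\tau)|\leq C_N(1+|\tau|)^{-N}$ combined with $|\tau|\geq|t|/2$ beats the kernel for $N$ large, while on $[t/2,\infty)$ we have $\tau-t\geq|t|/2$, so
$$\Big|\int_{t/2}^\infty\frac{\varphi(\tau)}{(\tau-t)^{1+\alpha}}\,d\tau\Big|\leq\(\tfrac{|t|}{2}\)^{-1-\alpha}\|\varphi\|_{L^1(\R)},$$
giving exactly the claimed $|t|^{-1-\alpha}$ decay. This final inequality is the heart of the argument: the exponent $1+\alpha$ in the definition of $\S_-^\alpha$ is pinned down by the size of the kernel at distance $\sim|t|$ against the $L^1$-mass of $\varphi$ near the origin.
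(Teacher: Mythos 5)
Your proposal is correct and follows essentially the same route as the paper: reduce to estimating $(D_\r)^\alpha\varphi$ itself via the commutation with $d^k/dt^k$, handle bounded $t$ trivially, and for $t\to-\infty$ obtain the crucial $|t|^{-1-\alpha}$ decay from the bound $(\tau-t)^{-1-\alpha}\leq(|t|/2)^{-1-\alpha}$ on $[t/2,\infty)$ against $\|\varphi\|_{L^1(\R)}$. The only cosmetic difference is that you split the near region at $t+1$ and use the Schwartz decay of $\varphi$ on $[t+1,t/2]$, whereas the paper applies the mean value theorem on all of $[t,t/2]$ together with the decay of $\varphi'$; both yield the same estimate.
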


\begin{proof}
Clearly, if $\varphi\in\S_-$ with $\supp\varphi\in(-\infty,A]$, then $(D_\r)^\alpha\varphi$ 
also has support in $(-\infty,A]$, see \eqref{eq:Drightalpha}.
Since $\dr\frac{d^k}{dt^k} \varphi = \frac{d^k}{dt^k}\dr \varphi$, we know $\dr \varphi \in C^{\infty}(\R)$ and only need to estimate $\dr \varphi$. 
If $-1<t <A$, the estimate holds because $\varphi$ is smooth and bounded. If $t>A$, then the estimate holds trivially because $(D_\r)^\alpha\varphi(t) = 0$. Suppose $-\infty < t < -1$ and write 
$$\int_{t}^\infty \frac{|\varphi(\tau) - \varphi(t)|}{|\tau-t|^{1+\alpha}} \, d \tau 
	=\int_{t}^{t/2} \frac{|\varphi(\tau) - \varphi(t)|}{|\tau-t|^{1+\alpha}} \, d \tau + \int_{t/2}^\infty \frac{|\varphi(\tau) - \varphi(t)|}{|\tau-t|^{1+\alpha}} \, d \tau=I+II. $$
For $I$, note that
\begin{align*}
|\varphi(\tau) - \varphi(t)|
	&\leq |\varphi'(\xi)| \, |\tau-t |\\
	&= |\varphi'(\xi)| (1+|\xi|)^3 \frac{ (\tau - t )}{(1+|\xi|)^3} 
	\leq C_\varphi \frac{ (\tau-t)}{(1+|\xi|)^3}
\end{align*}
where $\xi$ is some point in between $t$ and $\tau$. Hence,
$$I \leq \frac{C}{|t|^{3}} \int_{t}^{t/2} \frac{1}{(\tau -t)^{\alpha}} \, d \tau = \frac{C}{|t|^{2+\alpha}}\leq \frac{C}{1+|t|^{1+\alpha}}.$$
On the other hand, if $\tau>t/2$, then $\tau-t > -t/2>0$ and
\begin{align*}
II	&\leq \int_{t/2}^{\infty} \frac{|\varphi(\tau)|}{(\tau-t)^{1+\alpha}} \, d \tau 
		+|\varphi(t)|\int_{t/2}^{\infty} \frac{1}{(\tau-t)^{1+\alpha}} \, d \tau \\
	&\leq \frac{C}{|t|^{1+\alpha}} \norm{\varphi}_{L^1(\R)}  
		+\frac{C}{|t|^{1+\alpha}} |t\varphi(t)| 
	\leq \frac{C}{1+|t|^{1+\alpha}}.
\end{align*}
Collecting all the terms, we get
$$| \dr \varphi(t)| \leq C\int_{t}^\infty \frac{|\varphi(\tau) - \varphi(t)|}{|\tau-t|^{1+\alpha}} \, d \tau  \leq \frac{C}{1+\abs{t}^{1+\alpha}}$$
for all $t\in\R$. Thus, $(D_\r)^\alpha\varphi\in\S_-^\alpha$.
\end{proof}

We endow $\S_-$ and $\S_-^{\alpha}$ with the families of seminorms
$$\rho_{-}^{\ell,k}(\varphi)= \sup_{t\in\R} \abs{t}^{\ell}\Big|\frac{d^k}{dt^k}\varphi(t)\Big|\quad\hbox{for}~\ell,k\geq0,$$
and
$$\rho_-^{\alpha,k}(\varphi) = \sup_{t\in\R}(1 + \abs{t}^{1+\alpha})\Big|\frac{d^k}{dt^k}\varphi(t)\Big|\quad\hbox{for}~k\geq0,$$
respectively. Let us denote by $(\S_-)'$ and $(\S_-^{\alpha})'$
the corresponding dual spaces of $\S_-$ and $\S_-^{\alpha}$.
Notice that $\S_- \subset \S_-^{\alpha}$, so that $(\S_-^{\alpha})' \subset (\S_-)'$.
It turns out that $(\S_-^{\alpha})'$ is the appropriate class of distributions to extend the definition of the left fractional derivative.

\begin{defn}\label{definition}
For $u \in (\S_-^{\alpha})'$, we define $\dl u$ as the distribution in $(\S_-)'$ given by
$$(\dl u)(\varphi) = u ( \dr \varphi )\quad\hbox{for any}~\varphi\in \S_-.$$
\end{defn}

Consider next the class of functions given by
$$L_-^{\alpha} = \left\{ u \in L^1_{\mathrm{loc}}(\R) :
\int_{-\infty}^A \frac{\abs{u(\tau)}}{1 + \abs{\tau}^{1+\alpha}} \, d\tau < \infty,~\hbox{for any}~A\in\R \right\}.$$
We use the notation
$$\norm{u}_{A} = \int_{-\infty}^A \frac{|u(\tau)|}{1+ |\tau|^{1+\alpha}} \, d \tau\quad\hbox{for}~A\in\R.$$
Any function $u\in L_-^\alpha$ defines a distribution in $(\S_-^{\alpha})'$ in the usual way,
so that $(D_\l)^\alpha u$ is well defined as an object in $(\S_-)'$.
The following result is proved similarly as in the case of the fractional Laplacian, see Silvestre \cite{Silvestre}, 
so the details are omitted.

\begin{prop}
Let $u \in L_-^{\alpha}$. Assume that $u \in C^{\alpha + \varepsilon}(\mathcal{I})$ for some $\varepsilon >0$
and some open set $\mathcal{I}\subset\R$. Then $\dl u\in C(\mathcal{I})$ and
$$\dl u(t) = \frac{1}{\Gamma(-\alpha)} \int_{-\infty}^{t} \frac{u(\tau) - u(t)}{(t-\tau)^{1 + \alpha}} \, d \tau \quad \text{ for all } t \in\mathcal{I}.$$
\end{prop}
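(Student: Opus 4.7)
The plan is to proceed in three steps: well-definedness of the principal-value integral on the right-hand side, continuity of the resulting function on $\mathcal{I}$, and finally identification with the distribution $\dl u$.

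For well-definedness, I fix $t_0 \in \mathcal{I}$ and choose $\delta > 0$ with $[t_0 - \delta, t_0] \subset \mathcal{I}$, then split
$$\int_{-\infty}^{t_0} \frac{u(\tau) - u(t_0)}{(t_0 - \tau)^{1+\alpha}} \, d\tau = \int_{t_0 - \delta}^{t_0} \frac{u(\tau) - u(t_0)}{(t_0 - \tau)^{1+\alpha}} \, d\tau + \int_{-\infty}^{t_0 - \delta} \frac{u(\tau) - u(t_0)}{(t_0 - \tau)^{1+\alpha}} \, d\tau.$$
The hypothesis $u \in C^{\alpha + \vep}(\mathcal{I})$ bounds the near integrand by $C(t_0 - \tau)^{\vep - 1}$, which is integrable on $[t_0 - \delta, t_0]$ since $\vep > 0$. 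For the far piece I bound $\abs{u(\tau) - u(t_0)} \leq \abs{u(\tau)} + \abs{u(t_0)}$, use that on $(-\infty, t_0 - \delta)$ the kernel $(t_0 - \tau)^{-(1+\alpha)}$ is comparable to $(1 + \abs{\tau}^{1+\alpha})^{-1}$ (up to constants depending on $t_0, \delta$), and conclude finiteness from $\norm{u}_{t_0 - \delta} < \infty$ together with the explicit integrability of the pure weight. Continuity of $I(t) = \frac{1}{\Gamma(-\alpha)} \int_{-\infty}^t \frac{u(\tau) - u(t)}{(t-\tau)^{1+\alpha}} \, d\tau$ on $\mathcal{I}$ then follows from the same bounds, which are uniform for $t$ ranging over any compact subinterval of $\mathcal{I}$, by dominated convergence.

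The central step is to identify this continuous $I$ with the distribution $\dl u$ on $\mathcal{I}$. I would argue by mollification: let $\eta_\kappa$ be a standard symmetric mollifier supported in $(-\kappa, \kappa)$ and set $u_\kappa = u \ast \eta_\kappa$. The $u_\kappa$ are smooth, they inherit an $L_-^\alpha$ bound uniform in $\kappa$, they inherit the Hölder regularity on any $\mathcal{I}' \subset \subset \mathcal{I}$ for $\kappa$ sufficiently small, and $u_\kappa \to u$ in the sense of $(\S_-^\alpha)'$. For a smooth $u_\kappa$, a direct Fubini argument, whose absolute integrability is guaranteed by the decay $\abs{\dr \varphi(t)} \leq C/(1+\abs{t}^{1+\alpha})$ proved in the previous lemma, yields
$$\int_\R I_{u_\kappa}(t)\, \varphi(t) \, dt = \int_\R u_\kappa(\tau) \dr \varphi(\tau) \, d\tau$$
for every $\varphi \in \S_-$ with $\supp \varphi \subset \mathcal{I}$. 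Sending $\kappa \to 0^+$, the right-hand side converges to $u(\dr \varphi) = (\dl u)(\varphi)$ thanks to $\dr\varphi \in \S_-^\alpha$ and $u_\kappa \to u$ in $(\S_-^\alpha)'$, while the left-hand side converges to $\int_\R I(t)\varphi(t)\, dt$ by applying the uniform-in-$\kappa$ near/far bounds from the previous step and dominated convergence. Since $\varphi$ is arbitrary and $I$ is continuous on $\mathcal{I}$, this forces $\dl u = I$ there.

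The main obstacle is the simultaneous control of the singularity at $\tau = t$, which requires the Hölder gain $\vep$ over $\alpha$, and the behavior at $-\infty$, which requires the $L_-^\alpha$ weight, \emph{and} making both behaviors uniform under mollification. The key technical point is that mollification preserves the local Hölder constants on slightly shrunken subsets of $\mathcal{I}$ and preserves the $L_-^\alpha$ norms with uniform constants, thereby providing the dominating functions needed in the limit $\kappa \to 0^+$. Once this uniformity is in hand, the passage from the distributional identity for smooth $u_\kappa$ to the pointwise formula for $u$ is routine.
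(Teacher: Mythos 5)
Your overall strategy is sound and, suitably fleshed out, gives a complete proof. Note, however, that the paper does not actually prove this proposition: it simply remarks that the argument is ``similar to the case of the fractional Laplacian'' and cites Silvestre's thesis. Silvestre's argument identifies the distribution with the integral formula \emph{directly}, by pairing $u$ with $\dr\varphi$ written as a limit of truncated integrals over $\abs{\sigma-\tau}>\rho$, applying Fubini at fixed $\rho$, and controlling the commutator terms near the diagonal using the H\"older regularity of $u$ on $\supp\varphi$. Your route instead interposes a mollification $u_\kappa=u\ast\eta_\kappa$, proves the adjoint identity for smooth functions, and passes to the limit; this trades the $\rho$-cutoff bookkeeping for the verification that mollification preserves, uniformly in $\kappa$, both the $L_-^\alpha$ bounds and the local H\"older seminorms. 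Both are legitimate; the direct route is shorter, while yours isolates the adjointness identity $\int \dl v\,\varphi=\int v\,\dr\varphi$ as the only structural input.

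Two places in your sketch deserve more care. First, the ``direct Fubini argument'' for smooth $u_\kappa$: the naive splitting of $\iint_{\tau<t}\frac{(u_\kappa(\tau)-u_\kappa(t))\varphi(t)}{(t-\tau)^{1+\alpha}}$ into two separate double integrals fails, since each piece diverges on the diagonal. The correct way is to write the inner integral in the variable $r=t-\tau$, check absolute convergence of $\int_\R\int_0^\infty\frac{\abs{u_\kappa(t-r)-u_\kappa(t)}\,\abs{\varphi(t)}}{r^{1+\alpha}}\,dr\,dt$ (local Lipschitz bounds near $r=0$, the $L_-^\alpha$ bound and the Schwartz decay of $\varphi$ for large $r$), apply Fubini, and then use the translation $t\mapsto t+r$ at fixed $r$ to move the difference quotient onto $\varphi$; this produces exactly $\Gamma(-\alpha)\int u_\kappa\,\dr\varphi$. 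Second, in the limit $\kappa\to0^+$ the uniform bound $\norm{u_\kappa}_A\leq C\norm{u}_{A+1}$ is not by itself a dominating function for the far part of $I_{u_\kappa}(t)$; you should instead combine $u_\kappa\to u$ in $L^1_{\mathrm{loc}}$ on $(-\infty,t-\delta]$ with the uniform smallness of the tails $\int_R^\infty\abs{u_\kappa(t-r)}r^{-1-\alpha}\,dr$ as $R\to\infty$, which does follow from the uniform $L_-^\alpha$ control. Finally, it is cleaner to test only against $\varphi\in C_c^\infty(\mathcal{I})$ (rather than all of $\S_-$ supported in $\mathcal{I}$), which suffices to identify the distribution with the continuous function $I$ on $\mathcal{I}$ and keeps all H\"older constants uniform on the compact set $\supp\varphi$.
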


\begin{rem}
We have found that the one-sided class $L_-^\alpha$ is the appropriate space of locally integrable functions
to define the left fractional derivative.
This is a refinement with respect to the distributional definition presented in
\cite[Remark~2.6]{Bernardis-et-al}, which was two-sided in nature.
\end{rem}

\subsection{One-sided weighted spaces}

A nonnegative, locally integrable function $\w=\w(\tau)$ defined on $\R$
is in the left-sided Sawyer class $A_p^-(\R)$, for $1 < p <\infty$,
 if there exists $C>0$ such that
$$
\( \frac{1}{h} \int_{a}^{a+h} \w\,d\tau\)^{1/p} \( \frac{1}{h} \int_{a-h}^a \w^{1-p'}\,d\tau \)^{1/p'} \leq C
$$
for all $a\in\R$ and $h >0$, where $1/p +1/p' = 1$.
We then write $\w \in A_p^-(\R)$. 
By re-orienting the real line, one may similarly define the right-sided $A_p^+(\R)$-condition:
a weight $\tilde{\w}$ belongs to $A_p^+(\R)$ if there is a constant $C>0$ such that
$$\( \frac{1}{h} \int_{a-h}^{a} \tilde{\w}\,d\tau\)^{1/p} \( \frac{1}{h} \int_{a}^{a+h} \tilde{\w}^{1-p'}\,d\tau \)^{1/p'} \leq C$$
for all $a\in\R$ and $h >0$. In this way, $\w \in A_p^-(\R)$ if and only if $\w^{1-p'} \in A_{p'}^+(\R)$.

From the definition, one should note that, for $\w \in A_p^-(\R)$, there exist $-\infty \leq a < b \leq \infty$ such that $\w = \infty$ in $(-\infty,a)$, $0 < \w < \infty$ in $(a,b)$, $\w = 0$ in $(b, \infty)$, and $\w \in L^1_{\mathrm{loc}}((a,b))$. For simplicity
and without loss of generality, we will assume $(a,b) = \R$, so that $0 < \w < \infty$ in $\R$.

The one-sided Hardy--Littlewood maximal functions $M^-$ and $M^+$ are defined by
$$M^- u(t) = \sup_{h >0} \frac{1}{h} \int_{t-h}^{t} \abs{u(\tau)} \, d \tau
\quad \hbox{and} \quad M^+ u(t) = \sup_{h >0} \frac{1}{h} \int_{t}^{t+h} \abs{u(\tau)} \, d \tau$$
respectively. If $1 < p < \infty$, then $M^{\pm}$ is bounded on $L^p(\R,\w)$ if and only if $\w \in A_p^{\pm}(\R)$, see \cite{Sawyer}. 
When $p=1$, $M^\pm$ is bounded from $L^1(\R,\w)$ into weak-$L^1(\R,\w)$ if and only if $\w \in A_1^\pm(\R)$, namely,
there exists $C>0$ such that
$$M^\mp\w(t) \leq C \w(t) \quad\hbox{for a.e.}~t\in\R$$
see \cite{Martin-Reyes-et-al}. 
We refer to \cite{Lorente,MartinBMO,Cabrelli-Torrea,Martin-Reyes-et-al,Canadian,Sawyer}
for these and more properties of one-sided weights. For a measurable set $E\subset\R$, we denote
$$\w(E)=\int_E\w\,d\tau.$$
An important property that we will use is the following.

\begin{lem}[{See \cite[Theorem~3]{Lorente}}]\label{LemmaLorente}
Let $\eta=\eta(t)\geq0$ be a integrable function with support in $[0,\infty)$ and nonincreasing in $[0,\infty)$. Then, for any
measurable function $u:\R\to\R$ and for almost all $t\in\R$, we have
$$|u\ast\eta(t)|\leq M^-u(t)\int_0^\infty\eta(\tau)\,d\tau.$$
By changing the orientation of the real line, the analogue conclusion holds for nondecreasing $\eta$ supported in $(-\infty,0]$
with $M^+$ in place of $M^-$.
\end{lem}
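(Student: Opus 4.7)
The plan is to decompose $\eta$ via the layer cake formula, reducing the estimate to bounding averages against characteristic functions of intervals $[0,h)$, each of which is directly controlled by the one-sided maximal function $M^-u$.

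First, since $\supp\eta\subset[0,\infty)$, I would write
$$|u\ast\eta(t)|\leq\int_0^\infty|u(t-\tau)|\,\eta(\tau)\,d\tau.$$
Because $\eta\geq0$ is nonincreasing on $[0,\infty)$, the superlevel sets $\{\tau\geq 0:\eta(\tau)>\lambda\}$ are intervals of the form $[0,h(\lambda))$, where $h(\lambda)=\sup\{\tau\geq 0:\eta(\tau)>\lambda\}$ (with the convention $\sup\emptyset=0$). The integrability of $\eta$ combined with Chebyshev's inequality gives $h(\lambda)<\infty$ for every $\lambda>0$, and $h$ is monotone, hence measurable. The layer cake identity then reads
$$\eta(\tau)=\int_0^\infty\chi_{[0,h(\lambda))}(\tau)\,d\lambda.$$

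Substituting this expression into the convolution bound and interchanging the order of integration (justified by Tonelli since all integrands are nonnegative) yields
$$|u\ast\eta(t)|\leq\int_0^\infty\Big(\int_0^{h(\lambda)}|u(t-\tau)|\,d\tau\Big)\,d\lambda.$$
The change of variables $s=t-\tau$ in the inner integral gives
$$\int_0^{h(\lambda)}|u(t-\tau)|\,d\tau=\int_{t-h(\lambda)}^{t}|u(s)|\,ds\leq h(\lambda)\,M^-u(t)$$
directly from the definition of $M^-$. Applying layer cake a second time,
$$\int_0^\infty h(\lambda)\,d\lambda=\int_0^\infty\big|\{\tau:\eta(\tau)>\lambda\}\big|\,d\lambda=\int_0^\infty\eta(\tau)\,d\tau,$$
so assembling the pieces produces the asserted pointwise bound for every $t$ at which the inequality $\int_{t-h}^{t}|u(s)|\,ds\leq h\,M^-u(t)$ holds, i.e.\ for a.e.\ $t\in\R$. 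The companion statement for nondecreasing $\eta$ supported in $(-\infty,0]$ is obtained by the reflection $\tau\mapsto-\tau$, which exchanges $M^-$ and $M^+$.

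There is no substantive obstacle here: the argument is a clean two-step layer cake decomposition, and the only technical points to check are the measurability of $h$ (immediate from monotonicity) and the validity of the Tonelli interchange (immediate from nonnegativity). The key conceptual observation is simply that a nonincreasing, nonnegative kernel supported on $[0,\infty)$ is a convex combination of characteristic functions of left-anchored intervals $[0,h)$, which are precisely the averaging windows appearing in the definition of $M^-$.
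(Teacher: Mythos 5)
Your proof is correct and follows the standard route: the paper does not prove this lemma itself (it is quoted from Lorente's Theorem~3), and the classical argument for such estimates is exactly your layer-cake decomposition of a nonincreasing kernel into characteristic functions of left-anchored intervals $[0,h)$, each of which is an averaging window for $M^-$. The only cosmetic point is that the superlevel set $\{\tau:\eta(\tau)>\lambda\}$ may be the closed interval $[0,h(\lambda)]$ rather than $[0,h(\lambda))$, so your identity $\eta(\tau)=\int_0^\infty\chi_{[0,h(\lambda))}(\tau)\,d\lambda$ can fail at isolated points of $\tau$; since these form a null set and only integrals of $\eta$ against $|u(t-\cdot)|$ are used, nothing in the argument is affected.
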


\begin{lem}[See {\cite[Theorem~1]{Canadian}}]\label{lem:WRHI}
If $\w \in A_p^-(\R)$, $1 \leq p < \infty$, then there exist $C, \delta >0$ such that
$$\frac{\w(E)}{\w((a,c))} \leq C \( \frac{\abs{E}}{b-a}\)^{\delta} $$
for all $a<b<c$ and all measurable subsets $E \subset (b,c)$.
\end{lem}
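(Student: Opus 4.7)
The plan is to deduce the lemma from a one-sided reverse Hölder inequality for $A_p^-(\R)$ weights combined with Hölder's inequality. This mirrors the classical strategy showing $A_p\subset A_\infty$ for Muckenhoupt weights, with modifications to respect the asymmetry of Sawyer's condition (in which $\w^{1-p'}$ integrals lie to the left of the $\w$ integrals).

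First I would establish the following one-sided reverse Hölder inequality: there exist $q>1$ and $C_0>0$, depending only on $p$ and the $A_p^-$ constant of $\w$, such that for every $a<b<c$ with $c-b\le b-a$,
\[
\left(\int_b^c \w^q\,d\tau\right)^{1/q} \le \frac{C_0(c-b)^{1/q}}{b-a}\,\w((a,c)).
\]
The proof would adapt the classical Calderón--Zygmund self-improvement argument: perform a stopping-time decomposition of the level sets of $\w$ inside $(a,c)$ whose stopping intervals extend to the left of each chosen right endpoint, so that each iteration can invoke the $A_p^-$ condition to trade growth of $\w$ on the right for controlled $\w^{1-p'}$ mass on the left, yielding the extra integrability exponent $q>1$.

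For any measurable $E\subset(b,c)$ in the regime $c-b\le b-a$, Hölder's inequality with exponents $q$ and $q'$ then gives
\[
\w(E) = \int_E \w\,d\tau \le |E|^{1/q'}\left(\int_b^c \w^q\,d\tau\right)^{1/q} \le C_0\left(\frac{|E|}{b-a}\right)^{1/q'}\w((a,c)),
\]
proving the lemma in this case with $\delta=1/q'$. For the remaining case $c-b>b-a$, I would decompose $(b,c)=\bigcup_{k=1}^K(b_{k-1},b_k)$ with $b_0=b$ and $b_k-a=2(b_{k-1}-a)$, so that $b_k-b_{k-1}\le b_{k-1}-a$ and $K\lesssim\log_2\tfrac{c-a}{b-a}$. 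Applying the previous estimate to each triple $(a,b_{k-1},b_k)$ and each piece $E\cap(b_{k-1},b_k)$, then bounding $\w((a,b_k))\le\w((a,c))$ and summing, the weights $(b_{k-1}-a)^{-1/q'}=(2^{k-1}(b-a))^{-1/q'}$ produce a convergent geometric series, so the same exponent $\delta=1/q'$ serves for all $a<b<c$.

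The main obstacle will be the first step: the one-sided reverse Hölder inequality. The Calderón--Zygmund stopping-time argument must be re-engineered so that the stopping intervals extend to the left of each chosen right endpoint; this is the mechanism that converts growth of $\w$ to the right into controlled $\w^{1-p'}$ mass to the left via the $A_p^-$ condition. Once this self-improvement is in hand, the rest of the argument is a short combination of Hölder's inequality and the geometric covering above.
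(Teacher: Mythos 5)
The paper does not prove this lemma; it quotes it from Mart\'in-Reyes--Pick--de la Torre \cite{Canadian}, Theorem~1, where the displayed inequality appears as one of several equivalent characterizations of the one-sided $A_\infty$ condition. So there is no internal proof to compare against, and the relevant question is whether your argument stands on its own.

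Your reduction is correct as far as it goes: granting the one-sided weak reverse H\"older inequality $\bigl(\tfrac{1}{c-b}\int_b^c \w^q\,d\tau\bigr)^{1/q}\le \tfrac{C_0}{b-a}\,\w((a,c))$ for $c-b\le b-a$, the H\"older step does give $\w(E)\le C_0\bigl(|E|/(b-a)\bigr)^{1/q'}\w((a,c))$ in that regime (using $(c-b)^{1/q}\le(b-a)^{1/q}$), and the dyadic decomposition $b_k=a+2^k(b-a)$ with the geometric series $\sum_k 2^{-(k-1)/q'}$ correctly handles $c-b>b-a$; the exponents check out and $\delta=1/q'$ works uniformly. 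The genuine gap is that the weak reverse H\"older inequality itself is asserted rather than proved, and it carries essentially all of the difficulty. It is not a routine adaptation of the classical Calder\'on--Zygmund self-improvement: the two-sided reverse H\"older inequality is simply \emph{false} for one-sided weights (e.g.\ $\w(t)=e^{-t}\in A_p^-(\R)$ is not doubling, and the classical RHI forces doubling), which is exactly why the left-extended ``weak'' form is needed, and establishing it requires a genuinely one-sided rising-sun/stopping-time construction together with the dual fact $\w^{1-p'}\in A_{p'}^+(\R)$ --- plus a separate treatment of $p=1$, where $\w^{1-p'}$ is unavailable and one must argue directly from $M^+\w\le C\w$, a case your sketch does not address. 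As written, the proposal is a correct derivation of the lemma from another part of the very theorem the paper cites (the weak RHI is also among the equivalences in \cite{Canadian}, Theorem~1), not a self-contained proof.
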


\begin{lem}\label{lem:observation}
If $\w \in A_1^-(\R)$, then there is a constant $C>0$ such that, for any $0 < a < b$, 
$$ \frac{\w((-a,-a+(b-a)))}{2(b-a)} \leq C \inf_ {-b <t <-a}\w(t).$$
\end{lem}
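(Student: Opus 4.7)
The plan is to invoke the equivalent formulation of the $A_1^-$ condition recalled just above the lemma, namely $M^+\w(t)\leq C\w(t)$ for a.e.\ $t\in\R$, and to apply it at points $t$ lying in the interval $(-b,-a)$ immediately to the left of $-a$. The key observation is that the target interval $(-a,-a+(b-a))$ lies entirely to the right of every such $t$, so it can be captured inside a single one-sided averaging window $(t,t+h)$ whose length $h$ is comparable to $b-a$.

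Concretely, for each $t\in(-b,-a)$ I would set
$$h=(-a+(b-a))-t=(b-2a)-t.$$
Since $t<-a<b-2a$ (the last inequality uses $a<b$), we have $h>0$ and $t+h=-a+(b-a)$, so $(t,t+h)\supseteq(-a,-a+(b-a))$. Moreover, $t>-b$ gives $h<(b-2a)-(-b)=2(b-a)$. Plugging this particular window into the definition of $M^+$ yields
$$M^+\w(t)\geq\frac{1}{h}\int_t^{t+h}\w\,d\tau\geq\frac{1}{2(b-a)}\int_{-a}^{-a+(b-a)}\w\,d\tau=\frac{\w((-a,-a+(b-a)))}{2(b-a)}.$$

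Combined with $M^+\w(t)\leq C\w(t)$ for a.e.\ $t$, this gives
$$\frac{\w((-a,-a+(b-a)))}{2(b-a)}\leq C\w(t)\quad\text{for a.e.}~t\in(-b,-a),$$
and taking the (essential) infimum over $t\in(-b,-a)$ finishes the proof. There is essentially no substantial obstacle here: the only mild point is choosing $h$ so that the window simultaneously covers $(-a,-a+(b-a))$ and has length at most $2(b-a)$, and the choice $h=(b-2a)-t$ does both. Conceptually, the lemma is just the statement that an $A_1^-$ weight is uniformly bounded below on an interval adjacent (on the left) to any interval where we average, which is precisely what $M^+\w\lesssim\w$ encodes.
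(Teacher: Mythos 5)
Your proof is correct and follows essentially the same route as the paper's: apply the pointwise $A_1^-$ characterization $M^+\w(t)\leq C\w(t)$ at $t\in(-b,-a)$ with a one-sided window covering $(-a,-a+(b-a))$ of length at most $2(b-a)$. The only cosmetic difference is that the paper uses the fixed window $h=2(b-a)$ (noting $(-a,-a+(b-a))\subset(t,t+2(b-a))$) whereas you take a $t$-dependent $h$; both yield the identical estimate.
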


\begin{proof}
Let $t \in (-b,-a)$. Since $(-a, -a+(b-a)) \subset (t, t+2(b-a))$, then, by the $A_1^-(\R)$-condition, we get
\begin{align*}
C\w(t)&\geq M^+\w(t) 
	\geq \frac{1}{2(b-a)} \int_t^{t+2(b-a)} \w(\tau) \, d\tau \\
	&\geq \frac{1}{2(b-a)} \int_{-a}^{-a+(b-a)} \w(\tau) \, d\tau
	=  \frac{\w((-a,-a+(b-a)))}{2(b-a)}
\end{align*}
for almost every $t\in\R$.
\end{proof}

The following result says that $(D_\l)^\alpha u$ is well defined as a distribution in $(\S_-)'$ whenever $u\in L^p(\R,\w)$,
for $\w\in A_p^-(\R)$, $1\leq p<\infty$.

\begin{prop}\label{prop:A}
If $\w \in A_p^-(\R)$, $1\leq p<\infty$, then $L^p(\R,\w) \subset L_-^{\alpha}$, $\alpha\geq0$, and, for any $A\in\R$,
there is a constant $C=C_{A,\w,p}>0$ such that
$$\|u\|_A\leq C\|u\|_{L^p(\R,\w)}.$$
In particular, $L^p(\R,\w)\subset L^1_{\mathrm{loc}}(\R)$.
\end{prop}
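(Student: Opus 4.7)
The plan is to prove both the tail estimate $\norm{u}_A \leq C\norm{u}_{L^p(\R,\w)}$ and the embedding $L^p(\R,\w)\subset L^1_{\mathrm{loc}}(\R)$ by showing that in each case ($p=1$ and $1<p<\infty$) the function $\tau\mapsto\chi_{(-\infty,A]}(\tau)(1+|\tau|^{1+\alpha})^{-1}$ lies in the dual of $L^p(\R,\w)$. The two cases require different devices.

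For $p=1$, I would exploit the $A_1^-$-condition $M^+\w\leq C\w$ a.e.\ directly: testing with $h=A+1-\tau$ for $\tau<A$ yields the pointwise lower bound
$$\w(\tau)\geq\frac{c_A}{A+1-\tau}\quad\text{for a.e.~}\tau<A,$$
with $c_A=C^{-1}\int_A^{A+1}\w\,d\tau>0$. Since $(A+1-\tau)/(1+|\tau|^{1+\alpha})$ is uniformly bounded on $(-\infty,A]$ whenever $\alpha\geq 0$, the reciprocal $((1+|\tau|^{1+\alpha})\w(\tau))^{-1}$ is uniformly bounded there, and $\norm{u}_A\leq C_{A,\w,\alpha}\norm{u}_{L^1(\R,\w)}$ follows immediately by writing $|u|=(|u|\w)\cdot\w^{-1}$.

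For $1<p<\infty$, H\"older's inequality reduces matters to the finiteness of
$$J := \int_{-\infty}^A \frac{\w(\tau)^{1-p'}}{(1+|\tau|^{1+\alpha})^{p'}}\,d\tau.$$
I would decompose $(-\infty,A]$ into $(A-1,A]$ and the dyadic shells $I_k=(A-2^{k+1},A-2^k]$, $k\geq 0$. Over $(A-1,A]$ the $A_p^-$-condition with $a=A$, $h=1$ gives $\int_{A-1}^A \w^{1-p'}\,d\tau<\infty$. Over each $I_k$, the $A_p^-$-condition with $a=A-2^k$, $h=2^k$ yields
$$\int_{I_k}\w^{1-p'}\,d\tau \leq C\cdot 2^{k}\left(\frac{1}{2^k}\int_{A-2^k}^A\w\,d\tau\right)^{-p'/p},$$
while Lemma \ref{lem:WRHI}, applied with $a=A-2^k$, $b=A-2^{k-1}$, $c=A$ and $E=(A-1,A)$, provides the crucial lower bound $\int_{A-2^k}^A\w\,d\tau\geq c\cdot 2^{k\delta}$ for some $\delta>0$ independent of $k$. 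Combining these with $1+|\tau|^{1+\alpha}\gtrsim 2^{k(1+\alpha)}$ on $I_k$, the $k$-th summand of $J$ is controlled by $C\cdot 2^{k[1+(1-\delta)p'/p-(1+\alpha)p']}$; a brief calculation (using $p'/p=1/(p-1)$) simplifies the exponent to $-(\alpha p+\delta)/(p-1)<0$, so $J$ is a convergent geometric series.

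The main obstacle is the borderline case $\alpha=0$: using only the $A_p^-$-condition one obtains a non-summable estimate there, and it is precisely the quantitative reverse-doubling exponent $\delta>0$ supplied by Lemma \ref{lem:WRHI} that produces strict geometric decay $2^{-\delta/(p-1)}$ and rescues the argument. The $L^1_{\mathrm{loc}}$-inclusion is then automatic from the tail bound, since $1/(1+|\tau|^{1+\alpha})$ is bounded below on any compact subset of $\R$.
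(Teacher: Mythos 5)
Your proof is correct, but it routes both cases differently from the paper, essentially swapping the techniques between them. For $1<p<\infty$ the paper also applies H\"older's inequality to reduce matters to the finiteness of $\int_{-\infty}^A\w(\tau)^{1-p'}(1+|\tau|)^{-p'}\,d\tau$, but then simply observes that $\w^{1-p'}\in A_{p'}^+(\R)$ and quotes \cite[Lemma~4]{MartinBMO}; your dyadic decomposition combined with Lemma \ref{lem:WRHI} amounts to a self-contained proof of that cited fact, and your exponent computation $-(\alpha p+\delta)/(p-1)<0$ is right. The only cosmetic repairs needed there are that the inclusion $E=(A-1,A)\subset(b,c)=(A-2^{k-1},A)$ requires $k\geq1$, and that $1+|\tau|^{1+\alpha}\gtrsim 2^{k(1+\alpha)}$ on $I_k$ only holds past a threshold in $k$ depending on $A$; the finitely many exceptional terms are each finite by the $A_p^-$-condition alone. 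For $p=1$ the roles reverse: the paper is the one doing the dyadic decomposition, applying Lemma \ref{lem:observation} on each shell $(-2^{k+1},-2^k)$ and then Lemma \ref{lem:WRHI} to sum $\sum_k\w((-2^k,0))^{-1}$, whereas your single application of $M^+\w\leq C\w$ with the variable radius $h=A+1-\tau$ gives the pointwise decay $\w(\tau)\geq c_A/(A+1-\tau)$ and finishes in one line, with no need for the reverse-H\"older lemma at all; this is genuinely more economical. One last nit: you write the constant as $C_{A,\w,\alpha}$, but since $1+|\tau|^{1+\alpha}\geq 1+|\tau|$ for $|\tau|\geq1$ (and $\geq 1$ otherwise), your bounds are uniform in $\alpha\geq0$, matching the $C_{A,\w,p}$ of the statement.
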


\begin{proof}
Let $u \in L^p(\R,\w)$ and fix any $A\in\R$.

We first let $1 < p < \infty$. By H\"older's inequality,
\begin{align*}
\norm{u}_{A}
	&= \int_{-\infty}^A \frac{\abs{u(\tau)}}{1 + \abs{\tau}^{1+\alpha}} \, d\tau
	= \int_{- \infty}^A \abs{u(\tau)} \w(\tau)^{1/p} \, \frac{\w(\tau)^{-1/p}}{1 + \abs{\tau}^{1 + \alpha}} \, d \tau \\
	&\leq \norm{u}_{L^p(\R,\w)}\( \int_{-\infty}^A \frac{\w(\tau)^{-p'/p}}{( 1 + \abs{\tau})^{p'}}\, d \tau \)^{1/p'}=\norm{u}_{L^p(\R,\w)}\cdot (I_A)^{1/p'}.
\end{align*}
Observe that $\tilde{\w}(\tau)= \w(\tau)^{-p'/p} = \w(\tau)^{1-p'} \in A_{p'}^+(\R)$. 
To conclude, it is enough to recall that
$$I=\int_{-\infty}^0 \frac{\tilde{\w}(\tau)}{(1 + \abs{\tau})^{p'}} \, d\tau < \infty,$$
see \cite[Lemma~4]{MartinBMO}.

Now let $p=1$. For convenience with the notation, we let $A =0$ (the general case follows the same lines).
First observe that, by the $A_1^-(\R)$-condition,
\begin{align*}
 \int_{-1}^0 \frac{\abs{u(\tau)}}{1 + \abs{\tau}^{1+\alpha}} \, d \tau 
	&\leq \int_{- 1}^0 \abs{u(\tau)} \w(\tau) \w(\tau)^{-1} \, d \tau \\
	&\leq  \norm{u}_{L^1(\R,\w)} \sup_{t \in (-1,0)}\w(t)^{-1}\\
	&=  \norm{u}_{L^1(\w)}\( \inf_{t \in (-1,0)} \w(t)\)^{-1}\leq \norm{u}_{L^1(\w)} \frac{C}{\w((-1,0))}<\infty.
\end{align*}
On the other hand, by Lemma \ref{lem:observation},
\begin{align*}
 \int_{-\infty}^{-1}\frac{\abs{u(\tau)}}{1 + \abs{\tau}^{1+\alpha}} \, d \tau 
	&\leq \sum_{k=0}^{\infty} \int_{-2^{k+1}}^{-2^k} \frac{\abs{u(\tau)}}{\abs{\tau}} \, d\tau\\
	&\leq \sum_{k=0}^{\infty} \frac{1}{2^k} \int_{-2^{k+1}}^{-2^k} \abs{u(\tau)}\w(\tau) \w(\tau)^{-1} \, d\tau\\
	&\leq  \norm{u}_{L^1(\R,\w)}\sum_{k=0}^{\infty} \frac{1}{2^k} \( \inf_{-2^{k+1} < t < -2^k} \w(t)\)^{-1} \\
	&\leq  C\norm{u}_{L^1(\R,\w)}\sum_{k=0}^{\infty} \frac{1}{\w((-2^k, 0))}.
\end{align*}
Lemma \ref{lem:WRHI} implies that there exist $C,\delta>0$ such that
$$\frac{\w((-1,0))}{\w((-2^{k},0))} \leq C \( \frac{1}{2^{k}} \)^{\delta}.$$
Whence,
$$ \int_{-\infty}^{-1}\frac{\abs{u(\tau)}}{1 + \abs{\tau}^{1+\alpha}} \, d \tau \leq 
\frac{C}{\w((-1,0))}\norm{u}_{L^1(\R,\w)}\sum_{k=0}^{\infty} \( \frac{1}{2^{k}} \)^{\delta}<\infty.$$
Thus, $u\in L_-^\alpha$ with the corresponding estimate.
\end{proof}

\subsection{Density of smooth functions in $W^{1,p}(\R,\w)$}

The proof of the following statement is similar to that of Lorente \cite[Theorem 3]{Lorente}. 
Indeed, the idea is to bound $\psi \in C^{\infty}_c([0,\infty))$ by a measurable function $\eta$ supported in $[0, \infty)$
which is nonincreasing in $[0, \infty)$, and follow the steps of the proof in \cite{Lorente}.

\begin{prop}\label{prop:density}
Let $\w \in A_p^-(\R)$ and $u \in L^p(\R, \w)$ for $1 \leq p < \infty$. Let $\psi \in C_c^{\infty}([0,\infty))$ such that $\displaystyle\int_0^\infty
\psi\,dt = 1$. Define $\psi_{\varepsilon}(t) = \frac{1}{\varepsilon} \psi \( \frac{t}{\varepsilon}\)$. Then the following hold.
\begin{enumerate}[$(1)$]
\item $\abs{u*\psi_{\varepsilon}(t)} \leq CM^-u(t)$  for almost every $t \in \R$.
\item $\norm{u*\psi_{\varepsilon}}_{L^p(\R, \w)} \leq C \norm{u}_{L^p(\R, \w)}$.
\item $\lim_{\varepsilon \to 0^+} u*\psi_{\varepsilon}(t) = u(t)$ for almost every $t \in \R$.
\item $\lim_{\varepsilon \to 0^+}\norm{u*\psi_{\varepsilon} - u}_{L^p(\R, \w)} = 0$.
\end{enumerate}
\end{prop}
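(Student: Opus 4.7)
The strategy, as indicated in the remark preceding the proposition, is to dominate $\psi$ by its nonincreasing envelope so that Lemma~\ref{LemmaLorente} applies. I would set $\tilde{\psi}(t) = \sup_{s \geq t}|\psi(s)|$ for $t \geq 0$ and $\tilde{\psi}(t) = 0$ for $t<0$. Since $\psi \in C_c^{\infty}([0,\infty))$ has support in some $[0,R]$, $\tilde{\psi}$ is nonincreasing on $[0,\infty)$, bounded, supported in $[0,R]$, hence integrable; moreover $|\psi| \leq \tilde{\psi}$ pointwise, and a scaling change of variables yields $\int_0^{\infty} \tilde{\psi}_{\varepsilon}\,d\tau = \int_0^{\infty} \tilde{\psi}\,d\tau =: C_{\psi}$ independently of $\varepsilon$. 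Part $(1)$ is then immediate from Lemma~\ref{LemmaLorente}:
$$|u \ast \psi_{\varepsilon}(t)| \leq |u| \ast \tilde{\psi}_{\varepsilon}(t) \leq C_{\psi}\, M^-u(t) \quad\hbox{for a.e.}~t\in\R.$$

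For part $(2)$ with $1 < p < \infty$, I would raise the above to the $p$-th power, integrate against $\w$, and apply Sawyer's $L^p(\R,\w)$-boundedness of $M^-$. For $p=1$, that boundedness fails, so I would instead use Fubini and the substitution $s = t - \tau$ to transfer the convolution onto $\w$:
$$\|u \ast \psi_{\varepsilon}\|_{L^1(\R,\w)} \leq \int_{\R} |u(s)| \Big(\int_0^{\infty} \tilde{\psi}_{\varepsilon}(\tau)\, \w(s+\tau)\,d\tau\Big)\,ds.$$
The reflection $\eta(\sigma) := \tilde{\psi}_{\varepsilon}(-\sigma)$ is nondecreasing and supported in $(-\infty,0]$, and the inner integral equals $\w \ast \eta(s)$, so the second half of Lemma~\ref{LemmaLorente} bounds it by $C_{\psi}\, M^+\w(s)$. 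The $A_1^-(\R)$-condition $M^+\w \leq C\w$ then yields $(2)$ for $p=1$.

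For part $(3)$, I would write
$$u \ast \psi_{\varepsilon}(t) - u(t) = \int_0^R [u(t-\varepsilon\tau) - u(t)]\,\psi(\tau)\,d\tau$$
and invoke the one-sided Lebesgue differentiation theorem: at almost every $t$ the averages $h^{-1}\int_{t-h}^t |u(\sigma) - u(t)|\,d\sigma$ vanish with $h$, and the boundedness and compact support of $\psi$ then force the integral to vanish as $\varepsilon \to 0^+$. For part $(4)$ with $1 < p < \infty$, dominated convergence applies directly: by $(1)$ and $\w \in A_p^-(\R)$, the integrand $|u \ast \psi_{\varepsilon} - u|^p \w$ is dominated by $(CM^-u + |u|)^p \w \in L^1(\R)$. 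For $p=1$, I would use a density reduction: given $\delta>0$, pick $g \in C_c(\R)$ with $\|u-g\|_{L^1(\R,\w)} < \delta$ and split
$$\|u \ast \psi_{\varepsilon} - u\|_{L^1(\R,\w)} \leq \|(u-g) \ast \psi_{\varepsilon}\|_{L^1(\R,\w)} + \|g \ast \psi_{\varepsilon} - g\|_{L^1(\R,\w)} + \|g - u\|_{L^1(\R,\w)};$$
the outer two terms are at most $(C+1)\delta$ by $(2)$, and the middle term vanishes with $\varepsilon$ by the uniform convergence $g \ast \psi_{\varepsilon} \to g$ on a common compact set where $\w$ is integrable.

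The principal obstacle is the $p=1$ case throughout: the failure of the strong-type $(1,1)$ bound for $M^-$ forces the Fubini/$A_1^-$ detour in part $(2)$, and the non-translation-invariance of $L^1(\R,\w)$ (which blocks the standard continuity-of-translations argument) forces the density reduction in part $(4)$. Everything else is a routine one-shot use of part $(1)$.
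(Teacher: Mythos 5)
Your proposal is correct and follows exactly the route the paper indicates: the paper does not write out a proof but states that one should dominate $\psi$ by a nonincreasing function supported in $[0,\infty)$ and follow Lorente's argument, which is precisely your envelope $\tilde{\psi}$ combined with Lemma \ref{LemmaLorente}; your $p=1$ treatment of part $(2)$ via Fubini, the reflected kernel, and the $A_1^-(\R)$-condition is also the same device the paper itself uses in Step 1 of the proof of Theorem \ref{thm:derivative1}$(a)$. No gaps.
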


It follows that $C^\infty(\R)\cap L^p(\R,\w)$ and $C^{\infty}_c(\R)$ are dense in $L^p(\R, \w)$ for $\w \in A_p^-(\R)$, $1\leq p < \infty$.
Additionally, notice that if $\psi$ is as in Proposition \ref{prop:density} and $u\in W^{1,p}(\R,\w)$, then
$$(u*\psi_{\varepsilon})'(t) = \int_{-\infty}^{\infty} u'(\tau)\psi_{\varepsilon}(t-\tau) \, d \tau = (u' *\psi_{\varepsilon})(t).$$
Hence $u*\psi_{\varepsilon} \to u$ as $\varepsilon \to 0^+$ in $W^{1,p}(\R, \w)$, so that $C^{\infty}(\R) \cap W^{1,p}(\R, \w)$ and $C^{\infty}_c(\R)$ are dense in $W^{1,p}(\R, \w)$ for $\w \in A_p^-(\R)$, $1\leq p < \infty$.

\subsection{The maximal estimate \eqref{eq:Ma}}

\begin{thm} \label{Prop:lp max} 
There exists a universal constant $C>0$ such that for any $u \in W^{1,p}(\R,\w)$, $\w \in A_p^-(\R)$, $1 \leq p < \infty$, we have
$$\sup_{0<\alpha<1}
\abs{\frac{1}{\Gamma(-\alpha)} \int_{0}^{\infty} \( u(t-\tau) - u(t)\) \frac{d \tau}{\tau^{1 + \alpha}}} \leq C \( M^-(u')(t) +M^-u(t) \)$$
for~a.e. $t \in \R$.
\end{thm}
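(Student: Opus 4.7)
The plan is to split the integral at $\tau=1$ and estimate each piece separately, using the fact that
$$\frac{1}{|\Gamma(-\alpha)|} = \frac{\alpha}{\Gamma(1-\alpha)}$$
is uniformly bounded on $(0,1)$. By Proposition \ref{prop:A}, $u \in W^{1,p}(\R,\omega)$ is locally integrable and so is $u'$, hence $u$ has a locally absolutely continuous representative, and the identity $u(t-\tau)-u(t) = -\int_0^\tau u'(t-s)\,ds$ is valid for every $t\in\R$ and every $\tau>0$.

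For the near part $\tau\in(0,1)$, I would use the identity above together with
$$\int_0^\tau |u'(t-s)|\,ds = \int_{t-\tau}^t |u'(r)|\,dr \le \tau\, M^-(u')(t)$$
from the very definition of $M^-$. Then
$$\frac{1}{|\Gamma(-\alpha)|}\int_0^1 \frac{|u(t-\tau)-u(t)|}{\tau^{1+\alpha}}\,d\tau
\le \frac{\alpha}{\Gamma(1-\alpha)}\cdot \frac{M^-(u')(t)}{1-\alpha}
= \frac{\alpha}{\Gamma(2-\alpha)}M^-(u')(t),$$
which is controlled by a universal constant times $M^-(u')(t)$ since $\Gamma(2-\alpha)$ is bounded away from $0$ on $[0,1]$.

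For the far part $\tau\in(1,\infty)$, I would write $|u(t-\tau)-u(t)| \le |u(t-\tau)|+|u(t)|$. The $|u(t)|$ contribution gives $|u(t)|\int_1^\infty \tau^{-1-\alpha}d\tau = |u(t)|/\alpha$, which after multiplication by $1/|\Gamma(-\alpha)|$ becomes $|u(t)|/\Gamma(1-\alpha) \le C|u(t)|$. For the $|u(t-\tau)|$ contribution, the change of variables $r=t-\tau$ recasts it as a convolution $(|u|*\eta)(t)$ with $\eta(\tau) = \tau^{-1-\alpha}\chi_{[1,\infty)}(\tau)$, a nonincreasing function supported in $[0,\infty)$. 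Lemma \ref{LemmaLorente} applies directly and gives
$$(|u|*\eta)(t) \le M^-u(t)\int_0^\infty \eta\,d\tau = \frac{M^-u(t)}{\alpha},$$
which, combined with $1/|\Gamma(-\alpha)| = \alpha/\Gamma(1-\alpha)$, again yields the bound $C\,M^-u(t)$. Since $|u(t)|\le M^-u(t)$ for a.e.~$t$ by one-sided Lebesgue differentiation, adding the two estimates completes the proof.

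The main obstacle is subtle rather than technical: it is verifying that the absolute continuity argument is available in this weighted setting, but this is already handled by Proposition \ref{prop:A}, which guarantees $u,u'\in L^1_{\mathrm{loc}}(\R)$. Once that is in place, the $\alpha$-uniformity of the resulting constants is straightforward because the potentially singular factor $1/(1-\alpha)$ from the inner integration on $(0,1)$ is exactly canceled by the $(1-\alpha)$ hidden in $\Gamma(1-\alpha)$ via the identity $\Gamma(2-\alpha) = (1-\alpha)\Gamma(1-\alpha)$, while the singular factor $1/\alpha$ from the outer integration on $(1,\infty)$ is canceled by the prefactor $\alpha/\Gamma(1-\alpha)$.
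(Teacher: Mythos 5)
Your proof follows essentially the same route as the paper: split at $\tau=1$, control the near part by the fundamental theorem of calculus and $M^-(u')$, and control the far part by $M^-u$, with the singular factors $1/(1-\alpha)$ and $1/\alpha$ absorbed into $\Gamma(2-\alpha)$ and the prefactor $\alpha/\Gamma(1-\alpha)$ exactly as you describe. Your treatment of the near part is in fact slightly cleaner than the paper's: you bound $\int_{t-\tau}^t|u'|\le \tau M^-(u')(t)$ directly from the definition of $M^-$, whereas the paper runs the Taylor remainder through a double-integral manipulation and then invokes Lemma \ref{LemmaLorente} with the kernel $\tau^{-\alpha}\chi_{(0,1)}$; both give the same constant up to a factor of $\alpha$.

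The one genuine slip is in the far part: the kernel $\eta(\tau)=\tau^{-1-\alpha}\chi_{[1,\infty)}(\tau)$ is \emph{not} nonincreasing on $[0,\infty)$ --- it vanishes on $[0,1)$ and jumps up to $1$ at $\tau=1$ --- so Lemma \ref{LemmaLorente} does not apply to it as stated. The fix is immediate and is what the paper does: majorize $\eta$ by the genuinely nonincreasing kernel $\tilde\eta(\tau)=\chi_{(0,1]}(\tau)+\tau^{-1-\alpha}\chi_{(1,\infty)}(\tau)$, for which $(|u|*\eta)(t)\le(|u|*\tilde\eta)(t)\le M^-u(t)\int_0^\infty\tilde\eta\,d\tau=\frac{1+\alpha}{\alpha}\,M^-u(t)$; after multiplying by $\alpha/\Gamma(1-\alpha)$ the constant $(1+\alpha)/\Gamma(1-\alpha)$ is still uniformly bounded, so your conclusion stands unchanged. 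Everything else, including the justification of the absolute continuity of $u$ via Proposition \ref{prop:A} and the use of $|u(t)|\le M^-u(t)$ a.e., is correct.
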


\begin{proof}
We begin by writing
\begin{equation}\label{eq:splitting}
 I_\alpha + II_\alpha :=  \frac{1}{\Gamma(-\alpha)} \int_0^{1} \( u(t-\tau) - u(t)\) \frac{d \tau}{\tau^{1 + \alpha}} +  \frac{1}{\Gamma(-\alpha)} \int_1^{\infty} \( u(t-\tau) - u(t)\) \frac{d \tau}{\tau^{1 + \alpha}}.
 \end{equation}
To study $I_\alpha$, notice that
\begin{equation}\label{eq:Taylor's Remainder}
\begin{aligned}
\int_0^1 \abs{u(t-\tau) - u(t)} \frac{d \tau}{\tau^{1 + \alpha}}
	&\leq \int_0^1 \tau \int_0^1 \abs{ u'(t-r\tau)} dr \frac{d \tau}{\tau^{1 + \alpha}}\\
	&= \int_0^1 \int_0^1 \abs{u'( t- r\tau)} \frac{d\tau}{\tau^{\alpha}} \, dr \\
	&=   \int_0^1\(  \int_0^r \abs{u'( t- \tau)} \frac{d\tau}{\tau^{\alpha}} \) r^{\alpha}\frac{dr}{r} \\
	&\leq  \int_0^1 r^{\alpha-1} \int_0^1 \abs{u'(t-\tau)} \frac{d\tau}{\tau^{\alpha}} \, dr \\
	&= \frac{1}{\alpha} \int_0^1 \abs{u'(t-\tau)} \frac{d\tau}{\tau^{\alpha}}.
\end{aligned}
\end{equation}
Then, if we let $\eta(t) = t^{-\alpha}\chi_{(0,1)}(t)$, by Lemma \ref{LemmaLorente},
\begin{align*}
\abs{I_\alpha}
	&\leq \frac{1}{\abs{\Gamma(1-\alpha)}} (|u'|*\eta)(t) \\
	&\leq \frac{1}{\abs{\Gamma(1-\alpha)}} M^-u(t) \int_0^{\infty} \eta(\tau) \, d \tau=C_1M^-u(t)
\end{align*}
where
$$C_1= \frac{1}{\Gamma(2-\alpha)}.$$
Considering now the second integral in \eqref{eq:splitting}, we observe that
$$II_\alpha = \frac{1}{\Gamma(-\alpha)} \int_1^{\infty} u(t-\tau)  \frac{d \tau}{\tau^{1 + \alpha}}+  \frac{1}{\Gamma(1-\alpha)} \, u(t).$$
For the first term, we estimate using Lemma \ref{LemmaLorente} with $\eta(t)= \chi_{(0,1]}(t)+ t^{-1-\alpha}\chi_{(1,\infty)}(t)$,
$$\abs{ \frac{1}{\Gamma(-\alpha)}\int_1^{\infty} u(t-\tau)  \frac{d \tau}{\tau^{1 + \alpha}} } 
\leq \frac{1}{\abs{\Gamma(-\alpha)}} (\abs{u}*\eta)(t)\leq C_2M^-u(t)$$
where
$$C_2 = \frac{1+\alpha}{\Gamma(1-\alpha)}$$
which is bounded independently of $\alpha$.
Therefore,
$$\abs{II_\alpha} \leq  C_2  M^-u(t) +C_3\abs{u(t)}\leq (C_2 + C_3) M^-u(t)$$
where
$$C_3 = \frac{1}{\abs{\Gamma(1-\alpha)}}.$$
The result follows. 
\end{proof}

\section{Proof of Theorem \ref{thm:derivative1}}\label{Section:proofs derivatives}

\subsection{Proof of Theorem \ref{thm:derivative1}$(a)$}

The proof of part $(a)$ is organized as follows. 
We first show that the formula in the right hand side of \eqref{eq:formulaDleftalpha} is well-defined
as a function in $L^p(\R, \w)$. It is then shown that the distribution $\dl u$ is indeed given by such pointwise formula using the fact that $C^{\infty}_c(\R)$ is dense in $W^{1,p}(\R, \w)$.
The $L^p(\R, \w)$ estimate in \eqref{eq:LpestimateDleftalpha} follows immediately from these steps of the proof.
Next, we show that the limit in \eqref{eq:limitalpha} holds in $L^p(\R,\w)$ for $u \in C^{\infty}_c(\R)$
and then use a density argument to show the result for $u \in W^{1,p}(\R, \w)$. 
The a.e.~convergence of \eqref{eq:limitalpha} is proved by showing that the set of functions in $W^{1,p}(\R, \w)$
such that \eqref{eq:limitalpha} holds a.e.~is closed in $W^{1,p}(\R,\w)$. 
The a.e.~convergence of \eqref{eq:limitalpha2} follows similarly. Finally, the maximal estimate allows us to
prove that \eqref{eq:limitalpha2} holds in $L^p(\R,\w)$, $1<p<\infty$.

\medskip

\noindent\underline{\textbf{Step 1.}} The integral expression in \eqref{eq:formulaDleftalpha} defines a function in $L^p(\R, \w)$.

\smallskip

First let $1 < p < \infty$. By Theorem \ref{Prop:lp max} and the boundedness of $M^-$ in $L^p(\R,\w)$ for $\w\in A_p^-(\R)$,
it is immediate that
\begin{equation}\label{eq:bound}
\norm{\frac{1}{\Gamma(-\alpha)} \int_{0}^{\infty} \( u(t-\tau) - u(t)\) \frac{d \tau}{\tau^{1 + \alpha}}}_{L^p(\R,\w)}
 \leq C_\w \( \norm{u}_{L^p(\R,\w)} + \norm{u'}_{L^p(\R,\w)}\).
\end{equation}

For $p=1$, we consider the terms $I_\alpha$ and $II_\alpha$ as in \eqref{eq:splitting}. We use \eqref{eq:Taylor's Remainder} to observe that
\begin{align*}
\norm{I_\alpha}_{L^1(\R,\w)} 
	&\leq \frac{1}{\abs{\Gamma(1-\alpha)}} \int_{-\infty}^{\infty} \int_0^1 \abs{u'(t-\tau)} \frac{d\tau}{\tau^{\alpha}}\, \w(t) \, dt \\
&= \frac{1}{\abs{\Gamma(1-\alpha)}} \int_{-\infty}^{\infty} \int_{t-1}^t \frac{\abs{u'(\tau)}}{(t-\tau)^{\alpha}} \, d\tau \, w(t) \, dt\\
	&= \frac{1}{\abs{\Gamma(1-\alpha)}} \int_{-\infty}^{\infty} \abs{u'(\tau)} \int_\tau^{\tau+1} \frac{\w(t)}{(t-\tau)^{\alpha}} \, dt \, d\tau\\
	&= \frac{1}{\abs{\Gamma(1-\alpha)}} \int_{-\infty}^{\infty} \abs{u'(\tau)} \int_0^{1} \frac{\w(t+\tau)}{t^{\alpha}} \, dt \, d\tau.
\end{align*}
Since $\w\in A_1^-(\R)$, for a.e.~$\tau \in \R$ we can use Lemma \ref{LemmaLorente} with $\eta(\tau) = \abs{\tau}^{-\alpha}\chi_{(-1,0)}(\tau)$ to get
$$\int_0^{1} \frac{\w(t+\tau)}{t^{\alpha}} \, dt= \int_{-1}^{0} \frac{\w(\tau-t)}{\abs{t}^{\alpha}} \, dt
= (\w*\eta)(\tau)\leq \frac{1}{(1-\alpha)} M^+\w(\tau)\leq \frac{C}{(1-\alpha)} \w(\tau).$$
Therefore, 
$$\norm{I_\alpha}_{L^1(\R,\w)} 
	\leq C_\w C_1 \norm{u'}_{L^1(\R,\w)}$$
where $C_1$ is as in the proof of Theorem \ref{Prop:lp max}.
Moving to the second term in \eqref{eq:splitting}, we write
\begin{align*}
II_\alpha
 &= \frac{1}{\Gamma(-\alpha)} \int_1^{\infty} u(t-\tau)  \frac{d \tau}{\tau^{1 + \alpha}} +  \frac{1}{\Gamma(1-\alpha)} u(t) 
\end{align*}
and estimate
\begin{align*}
\norm{\int_1^{\infty} u(t-\tau)  \frac{d \tau}{\tau^{1 + \alpha}} }_{L^1(\R,\w)}
	&\leq  \int_{-\infty}^{\infty} \int_1^{\infty} \frac{\abs{u(t-\tau)}}{\tau^{1+\alpha}}  d\tau \,\w(t) \, dt\\
	&= \int_{-\infty}^{\infty} \int_{-\infty}^{t-1} \frac{\abs{u(\tau)}}{(t-\tau)^{1+\alpha}}  d\tau \,\w(t) \, dt\\
	&=  \int_{-\infty}^{\infty} \abs{u(\tau)}\int_{\tau+1}^{\infty} \frac{\w(t)}{(t-\tau)^{1+\alpha}} \, dt \, d\tau\\
	&=\int_{-\infty}^{\infty} \abs{u(\tau)}\int_{1}^{\infty} \frac{\w(t+\tau)}{t^{1+\alpha}} \, dt \, d\tau.
\end{align*}
By using again the $A_1^-(\R)$-condition and Lemma \ref{LemmaLorente} with 
$\eta(\tau) = \chi_{[-1,0)}(\tau)+\abs{\tau}^{-1-\alpha}\chi_{(-\infty,-1)}(\tau)$, for a.e.~$\tau \in \R$,
\begin{align*}
\int_{1}^{\infty} \frac{\w(t+\tau)}{t^{1+\alpha}} \, dt
	= \int_{-\infty}^{-1} \frac{\w(\tau-t)}{\abs{t}^{1+\alpha}} \, dt 
	\leq (\w*\eta)(\tau) 
	\leq \frac{1+\alpha}{\alpha} M^+\w(\tau)\leq C\frac{1+\alpha}{\alpha} \w(\tau).
\end{align*}
Therefore, by collecting terms,
\begin{equation}\label{eq:constantIIalpha}
\begin{aligned}
\norm{II_\alpha}_{L^1(\R,\w)}
	&\leq \frac{1}{\abs{\Gamma(-\alpha)}} \norm{ \int_1^{\infty} u(t-\tau)  \frac{d \tau}{\tau^{1 + \alpha}}}_{L^1(\R,\w)} +  \frac{1}{\abs{\Gamma(1-\alpha)}} \norm{u}_{L^1(\R,\w)}\\
	&\leq C_\w(C_2+C_3)\norm{u}_{L^1(\R,\w)}
\end{aligned}
\end{equation}
where $C_2,C_3>0$ are as in the proof of Theorem \ref{Prop:lp max}.
Thus,
\begin{equation}\label{eq:bound1}
\begin{aligned}
\norm{\frac{1}{\Gamma(-\alpha)} \int_{0}^{\infty} \( u(t-\tau) - u(t)\) \frac{d \tau}{\tau^{1 + \alpha}} }_{L^1(\R,\w)} \leq C_\w \( \norm{u}_{L^1(\R,\w)} + \norm{u'}_{L^1(\R,\w)}\).
\end{aligned}
\end{equation}

Hence, the integral in \eqref{eq:formulaDleftalpha} is in $L^p(\R,\w)$ for $1 \leq p < \infty$.

\medskip

\noindent\underline{\textbf{Step 2.}} The distribution $\dl u$ coincides with the integral formula in \eqref{eq:formulaDleftalpha}.
Therefore $\dl u$ is in $L^p(\R, \w)$ and, by \eqref{eq:bound} and \eqref{eq:bound1},
\eqref{eq:LpestimateDleftalpha} holds.

\smallskip

To show \eqref{eq:formulaDleftalpha}, let $u_k \in C^{\infty}_c(\R)$ such that $u_k \to u$ in $W^{1,p}(\R,\w)$ as $k\to\infty$.
We may write
\begin{align*}
\dl u_k(t) = \frac{1}{\Gamma(-\alpha)} \int_0^{\infty} \( u_k(t-\tau) - u_k(t)\) \frac{d\tau}{\tau^{1+\alpha}}.
\end{align*}
Using \eqref{eq:bound} and \eqref{eq:bound1}, we can show that the formulas converge in norm. Indeed,
\begin{align*}
&\norm{ \frac{1}{\Gamma(-\alpha)} \int_0^{\infty} \( u_k(t-\tau) - u_k(t)\) \frac{d\tau}{\tau^{1+\alpha}} -  \frac{1}{\Gamma(-\alpha)} \int_0^{\infty} \( u(t-\tau) - u(t)\) \frac{d\tau}{\tau^{1+\alpha}}}_{L^p(\R,\w)}\\
 &\hspace{.5in} \leq C \( \norm{u_k - u}_{L^p(\w)} + \norm{u_k'-u'}_{L^p(\R,\w)} \) \to 0\quad\hbox{as}~k\to\infty.
\end{align*}
If $\varphi \in C^{\infty}_c(\R)$ and $A$ is such that $\supp \varphi \subset (-\infty,A]$, then $\varphi \in \S_-$ and ${\dr \varphi \in \S_-^{\alpha}}$ with $\supp((D_\r)^\alpha\varphi)\subset(-\infty,A]$. Now, by Definition \ref{definition},
\begin{equation}\label{eq:distributionalcomputation}
\begin{aligned}
(\dl u)(\varphi)
	&= \int_{-\infty}^{\infty} u(t) \, \dr \varphi (t) \, dt \\
	&= \lim_{k \to \infty} \int_{-\infty}^{\infty} u_k(t) \, \dr \varphi(t) \, dt \\
	&= \lim_{k \to \infty} \int_{-\infty}^{\infty} \dl u_k(t) \, \varphi(t) \, dt \\
	&= \lim_{k \to \infty} \int_{-\infty}^{\infty} \( \frac{1}{\Gamma(-\alpha)} \int_0^{\infty} \( u_k(t-\tau) - u_k(t)\) \frac{d\tau}{\tau^{1+\alpha}}\) \varphi(t) \, dt \\
	&= \int_{-\infty}^{\infty}\( \frac{1}{\Gamma(-\alpha)} \int_0^{\infty} \( u(t-\tau) - u(t)\) \frac{d\tau}{\tau^{1+\alpha}} \)\varphi(t) \, dt.
\end{aligned}
\end{equation}
In the second identity above we used that, by Proposition \ref{prop:A},
\begin{align*}
\bigg|\int_{-\infty}^{\infty} u_k(t) \, \dr \varphi(t) \, dt - & \int_{-\infty}^{\infty} u(t) \, \dr \varphi (t) \, dt \bigg| \\
	&\leq \int_{-\infty}^{A} \abs{u_k(t)-u(t)} \, \abs{\dr \varphi (t)} \, dt \\
	&\leq C \int_{-\infty}^{A} \frac{\abs{u_k(t)-u(t)}}{1 + \abs{t}^{1+\alpha}} \, dt
	\leq C  \norm{u_k - u}_{L^p(\R,\w)} \to 0
\end{align*}
as $k \to \infty$ and in the last equality we observed that
\begin{align*}
&\abs{ \int_{-\infty}^{\infty}(D_\l)^\alpha u_k(t) \varphi(t) \, dt \ -  \int_{-\infty}^{\infty} \( \frac{1}{\Gamma(-\alpha)} \int_0^{\infty} \( u(t-\tau) - u(t)\) \frac{d\tau}{\tau^{1+\alpha}}\) \varphi(t) \, dt }\\
&\hspace{.5in} \leq C \int_{-\infty}^{A} \abs{ \int_0^{\infty} \( u_k(t-\tau) - u_k(t)\) \frac{d\tau}{\tau^{1+\alpha}}  - \int_0^{\infty} \( u(t-\tau) - u(t)\) \frac{d\tau}{\tau^{1+\alpha}} } \frac{1}{1+ \abs{t}^{1+\alpha}}\, dt \\
&\hspace{.5in} \leq C \norm{  \int_0^{\infty} \( u_k(\cdot-\tau) - u_k(\cdot)\) \frac{d\tau}{\tau^{1+\alpha}}  -  \int_0^{\infty} \( 
u(\cdot-\tau) - u(\cdot)\) \frac{d\tau}{\tau^{1+\alpha}} }_{L^p(\R,\w)} \to 0
\end{align*}
as $k\to\infty$. Therefore, since $\varphi$ was arbitrary in \eqref{eq:distributionalcomputation},
$$\dl u(t) = \frac{1}{\Gamma(-\alpha)} \int_0^{\infty} \( u(t-\tau) - u(t)\) \frac{d\tau}{\tau^{1+\alpha}} \quad\hbox{a.e. in}~\R.$$

\medskip

\noindent\underline{\textbf{Step 3.}} The limit as $\alpha \to 1^-$ in \eqref{eq:limitalpha} holds in $L^p(\R, \w)$ for $u \in C^{\infty}_c(\R)$.

\smallskip

Suppose that $u \in C^{\infty}_c(\R)$ and write $\dl u(t)=I_\alpha+II_\alpha$ as in \eqref{eq:splitting}.
For $1 < p < \infty$, we see from the proof of Theorem \ref{Prop:lp max} that
\begin{align*}
\norm{II_{\alpha}}_{L^p(\R,\w)}
	&\leq (C_2 +C_3)\norm{M^-{u}}_{L^p(\R,\w)}\\
	&\leq \(\frac{1+\alpha}{\Gamma(1-\alpha)} + \frac{1}{\Gamma(1-\alpha)} \)C_{\w}\norm{u}_{L^p(\R,\w)} \to 0
\end{align*}
as $\alpha\to1^-$.
For $p=1$, by \eqref{eq:constantIIalpha} in Step 1, we similarly obtain
$$\norm{II_{\alpha}}_{L^p(\w)}\leq C_w(C_2+C_3)\norm{u}_{L^1(\w)}\to0\quad\hbox{as}~\alpha\to1^-.$$
Next, observe that
\begin{align*}
I_{\alpha} - u'(t)
	&= \frac{1}{\Gamma(-\alpha)} \int_0^1 \( - \int_{0}^{\tau} u'(t-r) \, dr\)  \, \frac{d\tau}{\tau^{1+\alpha}} - u'(t)\\
	&= \frac{1}{\Gamma(-\alpha)} \int_0^1  \int_{0}^{\tau}(u'(t)- u'(t-r)) \, dr \, \frac{d\tau}{\tau^{1+\alpha}} +
	 \(\frac{\alpha}{\Gamma(2-\alpha)} - 1\) u'(t)\\
	&= \frac{1}{\Gamma(-\alpha)} \int_0^1 \int_{0}^{\tau} \int_{0}^r u''(t-\mu) \, d\mu \, dr \, \frac{d\tau}{\tau^{1+\alpha}} + \(\frac{\alpha}{\Gamma(2-\alpha)} - 1\) u'(t).
\end{align*}
Let $K$ be such that $\supp u''(\cdot -\mu) \subset [-K,K]$ for all $\mu \in [0,1]$. Then, for $1 \leq p < \infty$,
$$\norm{u''(\cdot -\mu)}_{L^p(\R,\w)}\leq\norm{u''}_{L^\infty(\R)}\w([-K, K])^{1/p}=c$$
where $c>0$ is independent of $\alpha$. Therefore,
\begin{align*}
\|I_{\alpha} -& u'\|_{L^p(\R, \w)} \\
	&\leq \frac{1}{\abs{\Gamma(-\alpha)}} \int_0^1 \int_{0}^{\tau} \int_{0}^r \norm{u''(t-\mu)}_{L^p(\R,\w)} \, d\mu \, dr  \, \frac{d\tau}{\tau^{1+\alpha}} + \abs{\frac{\alpha}{\Gamma(2-\alpha)} - 1} \norm{u'}_{L^p(\R,\w)}\\
	&= c \frac{\alpha (1-\alpha)}{\abs{\Gamma(3-\alpha)}} + \abs{\frac{\alpha}{\Gamma(2-\alpha)} - 1} \norm{u'}_{L^p(\R,\w)}\to 0\quad\hbox{as}~\alpha\to1^-.
\end{align*}
Hence, $\norm{\dl u - u'}_{L^p(\R,\w)} \leq \norm{II_{\alpha}}_{L^p(\R,\w)} + \norm{I_{\alpha} - u'}_{L^p(\R,\w)} \to 0$,
as $\alpha\to1^-$.

\medskip

\noindent\underline{\textbf{Step 4.}}  The limit as $\alpha \to 1^-$  in \eqref{eq:limitalpha} holds in $L^p(\R, \w)$ for $u \in W^{1,p}(\R, \w)$.

\smallskip

Let $u_k \in C^{\infty}_c(\R)$ such that $u_k \to u$ in $W^{1,p}(\R,\w)$ as $k\to\infty$. We just observe that,
by the $L^p$ estimate \eqref{eq:LpestimateDleftalpha} (that was proved in Step 2), for $1\leq p<\infty$,
\begin{align*}
&\norm{\dl u - u'}_{L^p(\R,\w)}\\
	&\quad \leq \norm{\dl(u - u_k)}_{L^p(\R,\w)} + \norm{\dl u_k - u_k'}_{L^p(\R,\w)} + \norm{u_k' - u'}_{L^p(\R,\w)}\\
	&\quad \leq C \( \norm{u-u_k}_{L^p(\R,\w)} + \norm{(u- u_k)'}_{L^p(\R,\w)} \) +  \norm{\dl u_k - u_k'}_{L^p(\R,\w)}.
\end{align*}
Then take $k$ large and choose $\alpha$ close to $1^-$ (see Step 3).

\medskip

\noindent\underline{\textbf{Step 5.}}  The limit as $\alpha \to 1^-$  in  \eqref{eq:limitalpha} holds almost everywhere for $u \in W^{1,p}(\R, \w)$.

\smallskip

It follows from Theorem \ref{Prop:lp max} and the properties of $M^-$ that the operator $T^*$ defined by
$$T^*u(t) = \sup_{0 < \alpha < 1} \dl u(t)\quad\hbox{for}~u\in W^{1,p}(\R,\w)$$
satisfies the estimates
$$\norm{T^*u}_{L^p(\R,\w)} \leq  C_{p,\w} \norm{u}_{W^{1,p}(\R,\w)}\quad\hbox{for any}~u\in W^{1,p}(\R,\w),~1 < p < \infty$$
and
$$\w\big(\{t\in\R:|T^*u(t)|>\lambda\}\big)\leq\frac{C_\w}{\lambda}\|u\|_{W^{1,1}(\R,\w)}\quad\hbox{for any}~u\in W^{1,1}(\R,\w).$$
In particular, $T^*$ is bounded from $W^{1,p}(\R,\w)$ into weak-$L^p(\R,\w)$, for
any $1\leq p<\infty$. This in turn implies that the set
$$E= \{ u \in W^{1,p}(\R,\w) : \lim_{\alpha \to 1^-} \dl u(t) = u'(t) \,\, \hbox{a.e.}\}$$
is closed in $W^{1,p}(\R,\w)$. Since $C^\infty_c(\R)\subset E$, by density, we get $E=W^{1,p}(\R,\w)$.

\medskip

\noindent
\underline{\bf Step 6.} The limit as $\alpha \to 0^+$ in \eqref{eq:limitalpha2} holds almost everywhere for $u \in W^{1,p}(\R, \w)$.

\smallskip

As in Step 5, one can check that the set
$$E' = \{ u \in W^{1,p}(\R,\w) : \lim_{\alpha \to 0^+} \dl u(t) = u(t) \,\, \hbox{a.e.}\}$$
is closed in $W^{1,p}(\R,\w)$. Since $C^\infty_c(\R)\subset E'$, by density, we get $E'=W^{1,p}(\R,\w)$.

\medskip

\noindent
\underline{\bf Step 7.} The limit as $\alpha \to 0^+$ in \eqref{eq:limitalpha2} holds in $L^p(\R, \w)$.

\smallskip

By Theorem \ref{Prop:lp max}, for any $0<\alpha<1$,
\begin{align*}
\abs{\dl u(t)- u(t)}^p \w(t) 
	&\leq \(C_n (M(u')(t) + M u(t)) + \abs{u(t)}\)^p \w(t) \\
	&\leq C_{n,p} \((M(u')(t))^p + (Mu(t))^p\) \w(t).
\end{align*}
Therefore, by Step 6 and the Dominated Convergence Theorem, \eqref{eq:limitalpha2} holds.

\smallskip

The proof of Theorem \ref{thm:derivative1}, part $(a)$, is completed.\qed

\subsection{Proof of Theorem \ref{thm:derivative1}$(b)$}

This is proved through a distributional argument.

Suppose that $\dl u \to v$ in $L^p(\R,\w)$ as $\alpha \to 1^{-}$. Let $\varphi \in C^{\infty}_c(\R)$.
Let $A\in\R$ be such that $\supp \varphi \subset (-\infty, A]$, so that $\varphi \in \S_{-}$ and $\dr \varphi \in \S_-^{\alpha}$.
By Proposition \ref{prop:A},
\begin{align*}
\abs{\int_{-\infty}^{\infty} v(t) \, \varphi(t) \, dt -  \int_{-\infty}^{\infty} \dl u(t) \, \varphi(t) \, dt}
	 &\leq \int_{-\infty}^{A} \abs{v(t)-\dl u(t)}\, \frac{C}{1+\abs{t}} \, dt\\
	 &\leq C_{\varphi,A,\w,p} \norm{v - \dl u}_{L^p(\R, \w)}\to 0
\end{align*}
as $\alpha\to1^-$. With this and the definition of $\dl u$ we can write
\begin{align*}
\int_{-\infty}^{\infty} v \, \varphi \, dt	
	 &= \lim_{\alpha \to 1^-} \int_{-\infty}^{\infty} \dl u \, \varphi \, dt \\
	 &= \lim_{\alpha \to 1^-} \int_{-\infty}^{\infty} u \, \dr \varphi \, dt.
\end{align*}
Next, notice that, by Proposition \ref{prop:A},
\begin{align*}
\abs{u(t)} |\dr\varphi + \varphi'| 
	&\leq |u(t)| \frac{C_\varphi}{1 + |t|^{1+\alpha}} \chi_{(-\infty, A]}(t) \\
	&\leq C_{\varphi} \frac{|u(t)|}{1 + |t|}  \chi_{(-\infty, A]}(t) \in L^1(\R).
\end{align*}
Therefore, by the Dominated Convergence Theorem, as $\dr\varphi(t)\to-\varphi'(t)$ as $\alpha\to0^+$,
\begin{align*}
\lim_{\alpha \to 1^-} \bigg|\int_{-\infty}^{\infty} u(t) &\, \dr \varphi(t) \, dt+ \int_{-\infty}^{\infty} u(t) \, \varphi'(t) \, dt\bigg|\\
	&\leq \int_{-\infty}^{\infty} \lim_{\alpha \to 1^-} \abs{u(t)} \abs{ \dr \varphi(t)  +\varphi'(t)} \, dt  = 0
\end{align*}
Whence,
\begin{align*}
\int_{-\infty}^{\infty} v \, \varphi \, dt	
	 &= \lim_{\alpha \to 1^-} \int_{-\infty}^{\infty} u \, \dr \varphi \, dt \\
	 &= -\int_{-\infty}^{\infty} u \,  \varphi' \, dt= \int_{-\infty}^{\infty}  u' \, \varphi \, dt.
\end{align*}
Therefore $v = u'$ a.e.~in $\R$. Since $u'=v \in L^p(\R,\w)$, we get $u \in W^{1,p}(\R,\w)$, and by Theorem \ref{thm:derivative1}$(a)$, the conclusion follows. \qed

\subsection{Proof of Theorem \ref{thm:derivative1}$(c)$}

Using the exact same arguments as in part $(b)$, we find that
\begin{align*}
\int_{-\infty}^{\infty} v \, \varphi \, dt &= \lim_{\alpha\to0^+}\int_{-\infty}^\infty \dl u \, \varphi\,dt \\
	 &= \lim_{\alpha \to0^+} \int_{-\infty}^{\infty} u \, \dr \varphi \, dt = \int_{-\infty}^{\infty} u \,  \varphi\, dt.
\end{align*}
Therefore $v = u$ a.e. in $\R$ and the conclusion follows. \qed

\section{Fractional Laplacians and Muckenhoupt weights}\label{Section:prelim Laplacian}

For $u \in \S(\R^n)$, the Fourier transform identity
$$\widehat{(-\Delta) u}(\xi) = |\xi|^2 \widehat{u}(\xi)$$
is used to define the fractional Laplacian as
$$\widehat{(-\Delta)^s u}(\xi) = |\xi|^{2s} \widehat{u}(\xi)\quad\hbox{for}~0<s<1.$$
Using the heat diffusion semigroup $\{e^{t\Delta} \}_{t \geq 0}$ generated by $-\Delta$, it is shown in
\cite{Stinga,Stinga-Torrea} that the fractional Laplacian can be expressed using the semigroup formula \eqref{eq:laplacian semigroup}
and that this is equivalent to the pointwise formula \eqref{eq:laplacian pw}. 
Here, $e^{t\Delta}$ is the operator
$$\widehat{e^{t\Delta} u}(\xi) = e^{-t|\xi|^2} \widehat{u}(\xi).$$
It is well known that
$e^{t\Delta} u(x) =  (W_t * u)(x)$ where $W_t(x)$ is the Gauss--Weierstrass kernel
$$ W_t(x) = \frac{1}{(4 \pi t)^{n/2}} \, e^{-\abs{x}^2/(4t)}\qquad\hbox{for}~x\in\R^n,~t>0.$$

\subsection{Distributional setting}

The distributional setting for the fractional Laplacian was developed by Silvestre in \cite{Silvestre}.
Consider the function class
$$\S_{s} = \left\{ \varphi \in C^{\infty} (\R^n) :|D^{\gamma} \varphi(x)| \leq \frac{C}{1 + \abs{x}^{n+2s}},~\text{for all }
\gamma \in \N_0^n,~x\in\R^n,~\hbox{for some}~C>0\right\}.$$
We endow $\S_s$ with the topology induced by the family of seminorms
$$\rho_{\gamma}^{s} (\varphi) = \sup_{x \in \R^n} (1 + \abs{x}^{n+ 2 s}) \abs{D^{\gamma} \varphi(x)}, \quad \text{ for } \gamma \in \N_0^n.$$
Let $(\S_{s})'$ be the dual space of $\S_{s}$. Notice that $\S \subset \S_{s}$, so that $(\S_{s})' \subset \S'$. 
For $u \in (\S_{s})'$, $(-\Delta)^{s} u$ is defined as a distribution on $\S'$ by
$$((-\Delta)^{s} u)(\varphi) = u \( (-\Delta)^{s} \varphi \) \quad\hbox{for any}~\varphi\in \S.$$
One can check that $L_{s} \subset (\S_{s})'$, where
$$L_{s} = \left\{ u\in L^1_{\mathrm{loc}}(\R^n) : \int_{\R^n} \frac{\abs{u(x)}}{1 + \abs{x}^{n+2s}} \, dx < \infty \right\}.$$

\begin{prop}[Silvestre {\cite{Silvestre}}]
Let $\Omega$ be an open set in $\R^n$ and $u \in L_s$. If $u \in C^{2s + \varepsilon}(\Omega)$ (or $C^{1, 2s + \varepsilon -1}(\Omega)$
if $s \geq 1/2$) for some $\varepsilon >0$, then $(-\Delta)^su\in C(\Omega)$ and
$$(-\Delta)^{s} u(x) = c_{n,s} P.V. \int_{\R^n} \frac{u(x) - u(y)}{\abs{x-y}^{n + 2s}} \,dy \quad \text{ for every } x \in \Omega.$$
Here (see \cite{Stinga,Stinga-Torrea})
\begin{equation}\label{eq:cns}
c_{n,s} = \frac{4^s \Gamma(n/2+s)}{\abs{\Gamma(-s)} \pi^{n/2}}\sim s(1-s)\quad\hbox{as}~s\to0,1.
\end{equation}
\end{prop}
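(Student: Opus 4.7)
The plan is to follow the classical decomposition-and-approximation strategy going back to Silvestre. Fix $x_0 \in \Omega$, pick $r>0$ with $\overline{B(x_0,2r)} \subset \Omega$, and a cutoff $\eta \in C_c^{\infty}(\Omega)$ with $\eta \equiv 1$ on $B(x_0,r)$. Decompose
$$u = u_1 + u_2, \qquad u_1 = \eta u, \qquad u_2 = (1-\eta)u.$$
Then $u_1$ is compactly supported with the same local regularity as $u$, while $u_2 \equiv 0$ on $B(x_0,r)$ and still belongs to $L_s$. By linearity of the distributional $(-\Delta)^s$, it suffices to verify the pointwise formula separately for $u_1$ and $u_2$ and to verify continuity.

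For $u_2$, since $u_2$ vanishes near $x_0$, for $x \in B(x_0,r/2)$ the candidate formula reduces to the absolutely convergent expression $-c_{n,s}\int_{\R^n}\frac{u_2(y)}{|x-y|^{n+2s}}\,dy$. To match this with the distributional definition, I would take $\varphi \in \S$ supported in $B(x_0,r/2)$, unfold $((-\Delta)^s u_2)(\varphi) = \int u_2 (-\Delta)^s\varphi\,dy$, use the well-known pointwise formula for $(-\Delta)^s\varphi$ on Schwartz functions, and apply Fubini. Fubini is justified because for $y$ outside the support of $\varphi$ one has $|(-\Delta)^s\varphi(y)| \leq C(1+|y|^{n+2s})^{-1}$, so the iterated integral is controlled by the $L_s$ norm of $u_2$.

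For $u_1$, the key point is the symmetrized expression
$$c_{n,s}\,\mathrm{P.V.}\int_{\R^n}\frac{u_1(x)-u_1(y)}{|x-y|^{n+2s}}\,dy = \tfrac{1}{2}c_{n,s}\int_{\R^n}\frac{2u_1(x)-u_1(x+z)-u_1(x-z)}{|z|^{n+2s}}\,dz.$$
The local regularity yields the bound $|2u_1(x)-u_1(x+z)-u_1(x-z)| \leq C\min(|z|^{2s+\varepsilon},1)$: a first-order Taylor bound when $2s+\varepsilon<1$, and a genuine second-order Taylor bound using the $C^{1,2s+\varepsilon-1}$ hypothesis when $s\geq 1/2$. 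This makes the integrand absolutely integrable and defines a continuous function on $\Omega$. To match the distribution, I would approximate $u_1$ in $\S_s$ by $\{u_1\ast\psi_\delta\}\subset \S$, for which the pointwise formula and the Fourier/semigroup definition are known to coincide (see \cite{Stinga,Stinga-Torrea}), and pass to the limit using the uniform bound above together with dominated convergence.

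Continuity of $x\mapsto(-\Delta)^s u(x)$ on $\Omega$ then follows from dominated convergence applied separately to both parts: the integrable majorant $C|z|^{2s+\varepsilon-n-2s}\chi_{\{|z|\leq r\}} + C(1+|y|^{n+2s})^{-1}\chi_{\{|y-x_0|\geq r/2\}}\cdot|u(y)|$ controls the integrand uniformly in $x \in B(x_0, r/2)$. The main obstacle I foresee is making the approximation step for $u_1$ fully rigorous: one must check that mollification preserves both the $C^{2s+\varepsilon}$ (or $C^{1,2s+\varepsilon-1}$) bound uniformly and the absolute integrability of the symmetrized integrand, so that the swap of $(-\Delta)^s$ with the limit in $\delta \to 0$ is legitimate. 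Once that is in hand, the identity of the P.V. integral with the distributional object $(-\Delta)^s u$ at every $x \in \Omega$ follows by testing against a dense family of $\varphi \in C_c^\infty(\Omega)$.
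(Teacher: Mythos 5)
The paper does not prove this proposition; it is quoted verbatim from Silvestre's thesis, and your outline (cutoff decomposition $u=\eta u+(1-\eta)u$, absolute convergence of the tail via the $L_s$ condition, second-difference symmetrization with the bound $|2u_1(x)-u_1(x+z)-u_1(x-z)|\leq C\min(|z|^{2s+\varepsilon},1)$, and identification with the distribution by mollification and testing) is essentially the classical argument from that reference. The only imprecision is the claim that $u_1\ast\psi_\delta\to u_1$ ``in $\S_s$'' --- $u_1$ is merely H\"older, so the convergence you actually need and have is in $(\S_s)'$ together with the uniform pointwise domination you already describe; with that wording fixed the sketch is correct.
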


\subsection{Muckenhoupt weights}

A function $\nu\in L^1_{\mathrm{loc}}(\R^n)$, $\nu>0$ a.e., is called an $A_p(\R^n)$ Muckenhoupt weight, $1<p<\infty$,
if it satisfies the following condition: there exists $C>0$ such that
\begin{equation}\label{eq:muckenhoupt}
\( \frac{1}{\abs{B}} \int_B \nu\,dx \)^{1/p} \( \frac{1}{\abs{B}} \int_B  \nu^{1-p'}\,dx \)^{1/p'} \leq C
\end{equation}
for any ball $B \subset \R^n$. If $\nu$ satisfies (\ref{eq:muckenhoupt}), we write $\nu \in A_p(\R^n)$.
Observe that $\nu \in A_p(\R^n)$ if and only if $\nu^{1-p'} \in A_{p'}(\R^n)$.
The Hardy--Littlewood maximal function is defined by
$$Mu(x) = \sup_{B \ni x} \frac{1}{\abs{B}} \int_B \abs{u(y)} \, dy$$
where the supremum is taken over all balls $B \subset \R^n$ containing $x$. 
For $1 < p < \infty$, the operator $M$ is bounded on $L^p(\R^n,\nu)$ if and only if $\nu \in A_p(\R^n)$. 
When $p=1$, $M$ is bounded from $L^1(\R^n, \nu)$ into weak-$L^1(\R^n, \nu)$ if and only if $\nu \in A_1(\R^n)$, namely, there exists $C>0$ such that
$$M \nu(x) \leq C \nu(x) \quad\hbox{for a.e.}~x \in \R^n.$$
For a measurable set $E\subset\R^n$ and a weight $\nu$, we denote
$$\nu(E)=\int_E\nu\,dx.$$
See \cite{Duo} for more details about Muckenhoupt weights.

\begin{lem}[{See \cite[Proposition~2.7]{Duo}}]\label{LemmaDuo}
Let $\eta=\eta(x)$ be a function that is positive, radial, decreasing (as a function on $(0,\infty)$) and integrable.
Then for any measurable function $u:\R^n\to\R$ and for almost every $x\in\R^n$, we have
$$|u\ast\eta(x)|\leq \|\eta\|_{L^1(\R^n)}Mu(x).$$
\end{lem}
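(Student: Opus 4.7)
The plan is to exploit the layer-cake (Cavalieri) representation of $\eta$. Since $\eta$ is nonnegative, radial, and nonincreasing in the radial variable, write $\eta(x) = \tilde{\eta}(\abs{x})$ with $\tilde{\eta}:[0,\infty)\to[0,\infty)$ nonincreasing. Then for each $t>0$ the super-level set $\set{x\in\R^n:\eta(x)>t}$ coincides with an open ball $B(0,r(t))$ centered at the origin, where $r(t)=\sup\set{r\geq 0:\tilde{\eta}(r)>t}$. Measurability of $t\mapsto r(t)$ is automatic from the monotonicity of $\tilde{\eta}$.

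The first step is to handle the elementary case in which $\eta$ is the characteristic function of a single ball centered at zero. For any $r>0$, directly from the definition of $M$,
\[
\abs{(u * \chi_{B(0,r)})(x)} \leq \int_{B(x,r)} \abs{u(y)} \, dy \leq \abs{B(0,r)}\, Mu(x)
\]
at every $x$ where $Mu(x)<\infty$ (otherwise the desired inequality is trivial).

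Next I would combine this building block with the layer-cake identity
\[
\eta(y) = \int_0^{\infty} \chi_{B(0,r(t))}(y)\, dt
\]
and Tonelli's theorem, which applies since both $\abs{u}$ and $\eta$ are nonnegative and measurable. This gives
\begin{align*}
\abs{u*\eta(x)}
    &\leq \int_{\R^n} \abs{u(y)}\, \eta(x-y)\, dy \\
    &= \int_0^{\infty} \int_{B(x,r(t))} \abs{u(y)}\, dy\, dt \\
    &\leq Mu(x) \int_0^{\infty} \abs{B(0,r(t))}\, dt.
\end{align*}
A second application of layer-cake, this time to $\eta$ itself, identifies $\int_0^{\infty}\abs{B(0,r(t))}\,dt$ with $\int_{\R^n}\eta(x)\,dx=\norm{\eta}_{L^1(\R^n)}$, which closes the estimate for a.e.~$x\in\R^n$.

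There is no real obstacle here, since no cancellation is invoked and everything reduces to controlling averages of $\abs{u}$ over balls centered at $x$. The only point requiring a moment's care is the rearrangement of $\eta$ as a nonnegative superposition of indicators of origin-centered balls, which is exactly the content of the layer-cake formula applied to a radially nonincreasing function; once that is in place, the proof is a one-line application of Tonelli's theorem.
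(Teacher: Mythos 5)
Your argument is correct. The paper gives no proof of this lemma---it is quoted from Duoandikoetxea's book---and your layer-cake decomposition of $\eta$ into a superposition of indicators of balls centered at the origin is essentially the standard proof given there (the only cosmetic point being that a super-level set of $\eta$ may be a closed rather than open ball, which changes nothing since the two have the same measure).
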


\begin{lem}[See {\cite[Corollary 7.6]{Duo}}]\label{prop:reverseHolder}
If $\nu \in A_p(\R^n)$, $1 \leq p < \infty$, then there exists $\delta >0$ such that given a ball $B$ and
a measurable subset $S$ of $B$, 
$$ \frac{\nu(S)}{\nu(B)} \leq C \( \frac{\abs{S}}{\abs{B}}\)^\delta.$$
\end{lem}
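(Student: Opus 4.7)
\medskip

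\noindent\textbf{Proof proposal.}
The plan is to deduce this $A_\infty$-type comparability estimate from the reverse H\"older inequality that every Muckenhoupt weight is known to satisfy. Concretely, I would first invoke the fact that any $\nu\in A_p(\R^n)$ with $1\leq p<\infty$ belongs to $A_\infty(\R^n)$, and so satisfies a reverse H\"older inequality: there exist $r>1$ and $C_0>0$ such that
$$\left(\frac{1}{|B|}\int_B \nu^{r}\,dx\right)^{1/r}\leq C_0\,\frac{1}{|B|}\int_B \nu\,dx$$
for every ball $B\subset\R^n$. This is the heart of the matter and the only nontrivial ingredient; it is proved via a Calder\'on--Zygmund stopping-time decomposition applied to $\nu$ on $B$, using the $A_p$-condition to control the level sets of $\nu$ by the measure of the stopping cubes. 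I would simply cite this as a standard fact about $A_p$ weights.

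Given the reverse H\"older bound, the rest is a short application of H\"older's inequality. For a measurable $S\subset B$, write
$$\nu(S)=\int_B \nu\,\chi_S\,dx\leq \left(\int_B\nu^{r}\,dx\right)^{1/r}|S|^{1/r'}
=|B|^{1/r}\left(\frac{1}{|B|}\int_B\nu^{r}\,dx\right)^{1/r}|S|^{1/r'}.$$
Applying the reverse H\"older inequality to the bracketed factor and using $|B|^{1/r}|S|^{1/r'}=|B|\,(|S|/|B|)^{1/r'}$, one obtains
$$\nu(S)\leq C_0\,|B|\cdot\frac{1}{|B|}\int_B\nu\,dx\cdot\left(\frac{|S|}{|B|}\right)^{1/r'}
= C_0\,\nu(B)\left(\frac{|S|}{|B|}\right)^{1/r'},$$
so the conclusion holds with $\delta=1/r'=1-1/r>0$ and $C=C_0$.

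The only real obstacle is establishing (or citing) the reverse H\"older inequality itself, which is the classical bridge from $A_p$ to $A_\infty$; once it is in hand, the comparability estimate above is a one-line H\"older argument. Since the statement is explicitly attributed to \cite[Corollary~7.6]{Duo}, I would present the argument in precisely this form: quote the reverse H\"older inequality from the cited source and then run the H\"older step to extract the exponent $\delta$.
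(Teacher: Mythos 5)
Your argument is correct, and it is essentially the standard derivation of this lemma: the paper itself gives no proof but cites \cite[Corollary~7.6]{Duo}, where the statement is obtained exactly as you propose, by combining the reverse H\"older inequality for $A_p$ weights with H\"older's inequality to get $\delta = 1/r'$.
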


Our next result shows that for any function $u \in L^p(\R^n, \nu)$, $\nu \in A_p(\R^n)$, $1\leq p<\infty$, the object $(-\Delta)^s u $ is well defined as a distribution in $\S'$.

\begin{prop}\label{prop:LsLp}
If  $u\in L^p(\R^n, \nu)$, $\nu \in A_p(\R^n)$, $1\leq p<\infty$, then $u\in L_s$, $s\geq0$,
and there is a constant $C=C_{n,p,\nu}>0$ such that
$$\norm{u}_{L_s} \leq C \norm{u}_{L^p(\R^n, \nu)}.$$
In particular, $L^p(\R^n,\nu) \subset L^1_{\operatorname{loc}}(\R^n)$.
\end{prop}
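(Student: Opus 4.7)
My plan is to mimic the structure of the proof of Proposition \ref{prop:A}, splitting the argument according to whether $1 < p < \infty$ or $p = 1$, and in both cases reducing the convergence of $\|u\|_{L_s}$ to dyadic estimates controlled by the $A_p(\R^n)$ hypothesis on $\nu$. The role played there by Lemma \ref{lem:observation} and Lemma \ref{lem:WRHI} will be played here by the Muckenhoupt counterparts: the $A_1(\R^n)$ condition and Lemma \ref{prop:reverseHolder}.

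For the case $1 < p < \infty$, I would apply H\"older's inequality to write
\begin{equation*}
\|u\|_{L_s} = \int_{\R^n} |u(x)| \nu(x)^{1/p} \cdot \frac{\nu(x)^{-1/p}}{1+|x|^{n+2s}}\,dx \leq \|u\|_{L^p(\R^n,\nu)} \left( \int_{\R^n} \frac{\nu(x)^{1-p'}}{(1+|x|^{n+2s})^{p'}}\,dx\right)^{1/p'}.
\end{equation*}
Since $\nu \in A_p(\R^n)$ implies $\nu^{1-p'} \in A_{p'}(\R^n)$, the weight $\nu^{1-p'}$ is locally integrable, making the integral over $B(0,1)$ immediately finite. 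For $|x|>1$, I would bound $(1+|x|^{n+2s})^{p'} \geq |x|^{(n+2s)p'}$ and decompose into dyadic shells $\{2^k < |x| \leq 2^{k+1}\}$, yielding a sum of the form $\sum_k 2^{-k(n+2s)p'} \nu^{1-p'}(B(0,2^{k+1}))$. The polynomial growth of $A_{p'}$ weights (equivalently, the doubling property iterated, or Lemma \ref{prop:reverseHolder} applied to $\nu^{1-p'}$ with $S=B(0,1)$ inverted) controls this quantity by $C \cdot 2^{(k+1)np'} \nu^{1-p'}(B(0,1))$, and the geometric series converges for $s \geq 0$ thanks to the self-improvement exponent from Lemma \ref{prop:reverseHolder}.

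For $p = 1$, I would proceed in the spirit of the one-dimensional argument. Split the integral at the unit ball. On $B(0,1)$, the $A_1(\R^n)$ condition $M\nu(x) \leq C\nu(x)$ gives $\nu(x)^{-1} \leq C|B(0,1)|/\nu(B(0,1))$ a.e., so this piece is bounded by a constant times $\|u\|_{L^1(\R^n,\nu)}$. On $\{|x|>1\}$, estimate $1+|x|^{n+2s} \geq |x|^{n+2s}$ and decompose into dyadic annuli:
\begin{equation*}
\int_{|x|>1} \frac{|u(x)|}{1+|x|^{n+2s}}\,dx \leq \sum_{k=0}^{\infty} \frac{1}{2^{k(n+2s)}} \int_{|x| \leq 2^{k+1}} |u(x)|\,dx.
\end{equation*}
For each $k$, applying the $A_1(\R^n)$ condition at the scale $B(0,2^{k+1})$ yields $\nu(x)^{-1} \leq C|B(0,2^{k+1})|/\nu(B(0,2^{k+1}))$ a.e.\ on this ball, so that $\int_{|x|\leq 2^{k+1}}|u| \leq C 2^{(k+1)n}\nu(B(0,2^{k+1}))^{-1}\|u\|_{L^1(\R^n,\nu)}$. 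Then Lemma \ref{prop:reverseHolder} applied with $S = B(0,1) \subset B = B(0,2^{k+1})$ gives $\nu(B(0,2^{k+1})) \geq C\nu(B(0,1)) \cdot 2^{(k+1)n\delta}$ for some $\delta > 0$, producing the extra decay factor $2^{-(k+1)n\delta}$.

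The main obstacle lies in ensuring convergence uniformly down to $s = 0$, where the polynomial decay $|x|^{-(n+2s)}$ only barely compensates the volume growth $|B(0,2^{k+1})| \sim 2^{(k+1)n}$. The resolution is precisely the strict positivity of the exponent $\delta$ in Lemma \ref{prop:reverseHolder}: the resulting dyadic sum reduces to $\sum_k 2^{-2ks - (k+1)n\delta}$, which converges for every $s \geq 0$ because $n\delta > 0$. This is the crucial use of the $A_\infty$-type self-improvement hidden in the Muckenhoupt condition and mirrors exactly how the one-sided analog in Proposition \ref{prop:A} handled $\alpha = 0$.
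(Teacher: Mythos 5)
Your $p=1$ argument is correct and is essentially the paper's proof verbatim: split at the unit ball, use the $A_1(\R^n)$ condition on each dyadic ball to replace $\int_{B_{2^{k+1}}}|u|$ by $C\,|B_{2^{k+1}}|\,\nu(B_{2^{k+1}})^{-1}\|u\|_{L^1(\R^n,\nu)}$, and then use Lemma \ref{prop:reverseHolder} with $S=B_1\subset B=B_{2^{k+1}}$ to get the \emph{lower} bound $\nu(B_{2^{k+1}})\geq c\,2^{(k+1)n\delta}\nu(B_1)$, which produces the decisive decay $2^{-(k+1)n\delta}$ even at $s=0$.

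The case $1<p<\infty$, however, has a genuine gap. After H\"older you must show $\int_{\R^n}\tilde\nu(x)(1+|x|)^{-np'}\,dx<\infty$ for $\tilde\nu=\nu^{1-p'}\in A_{p'}(\R^n)$ (note that uniformity in $s\geq 0$ forces you to handle exactly the exponent $np'$, since $1+|x|^{n+2s}$ is only comparable to $(1+|x|)^n$ uniformly in $s$). Your dyadic bound $\tilde\nu(B_{2^{k+1}})\leq C\,2^{(k+1)np'}\tilde\nu(B_1)$ is the sharp growth that $A_{p'}$ membership gives, and it leaves the series $\sum_k 2^{-k(n+2s)p'}\,2^{(k+1)np'}=C\sum_k 2^{-2ksp'}$, which diverges at $s=0$ and whose sum blows up as $s\to 0^+$. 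The gain you invoke cannot come from Lemma \ref{prop:reverseHolder}: that lemma bounds $\tilde\nu(S)/\tilde\nu(B)$ from \emph{above} for $S\subset B$, i.e.\ it bounds $\tilde\nu(B_{2^{k+1}})$ from \emph{below} — the wrong direction here (it is the right direction in your $p=1$ argument, where the large ball's measure appears in a denominator, but here it appears in a numerator). To repair your route you would need the openness of the Muckenhoupt classes, $\tilde\nu\in A_{p'}\Rightarrow\tilde\nu\in A_{p'-\epsilon}$, giving $\tilde\nu(B_{2^{k+1}})\leq C\,2^{(k+1)n(p'-\epsilon)}\tilde\nu(B_1)$ and hence a convergent series $\sum_k 2^{-kn\epsilon}$; this is a true but nontrivial theorem not among the quoted lemmas. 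The paper avoids all of this with a shorter device: with $f=\chi_{B_1}$ one has $Mf(x)\geq C_n(1+|x|)^{-n}$, so the boundedness of $M$ on $L^{p'}(\R^n,\tilde\nu)$ yields
\begin{equation*}
\int_{\R^n}\frac{\tilde\nu(x)}{(1+|x|)^{np'}}\,dx\leq C\int_{\R^n}\big(Mf(x)\big)^{p'}\tilde\nu(x)\,dx\leq C\,\tilde\nu(B_1)<\infty,
\end{equation*}
with a constant depending only on $n$, $p$, and $\nu$. You should either adopt this maximal-function argument or explicitly invoke the self-improvement $A_{p'}\subset A_{p'-\epsilon}$ in place of Lemma \ref{prop:reverseHolder}.
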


\begin{proof}
Suppose first that $1 < p < \infty$. By H\"older's inequality,
$$\norm{u}_{L_s}
	\leq \norm{u}_{L^p(\R^n, \nu)} \(C_n \int_{\R^n} \frac{\nu(x)^{1-p'}}{(1 + \abs{x}^{n})^{p'}} \, dx\)^{\frac{1}{p'}}.$$
Let $\tilde{\nu}(x) = \nu(x)^{1-p'} \in A_{p'}(\R^n)$. 
It is enough to show
$$\int_{\R^n} \frac{\tilde{\nu}(x)}{(1+ \abs{x})^{np'}}\, dx < \infty.$$
Let $f(x) = \chi_{B_1}(x)$.  If $\abs{x} \leq 1$, then $Mf(x) = 1$. If $\abs{x} \geq 1$, then $B_1\subset B(x, 2\abs{x}) $ and
$$Mf(x) \geq \frac{\abs{B(0,1)}}{\abs{B(x, 2\abs{x})}} = \frac{1}{(2\abs{x})^n} \geq \frac{C_n}{(1+\abs{x})^n}.$$
Since $M$ is bounded on $L^{p'}(\R^n,\tilde{\nu})$, for $\tilde{\nu} \in A_{p'}(\R^n)$,
\begin{align*}
\int_{\R^n} \frac{\tilde{\nu}(x)}{(1+ \abs{x})^{np'}} \, dx &\leq C \int_{\R^n} (Mf(x))^{p'}\tilde{\nu}(x) \, dx \\
&\leq C \int_{\R^n} (f(x))^{p'} \tilde{\nu}(x) \, dx
= C \int_{B_1} \tilde{\nu}(x) \, dx= C\nu^{1-p'}(B_1).
\end{align*}
Therefore, $\norm{u}_{L_s} \leq C \norm{u}_{L^p(\R^n,\nu)}\nu^{1-p'}(B_1)$.

Now let $p = 1$. Observe that
$$\int_{\abs{x}<1} \frac{\abs{u(x)}}{1 + \abs{x}^{n+2s}} \, dx
	\leq \norm{u}_{L^1(\R^n,\nu)}\sup_{x \in B_1}  \nu(x)^{-1}\leq C_{n,\nu}\norm{u}_{L^1(\R^n,\nu)}$$
where in the last inequality we used that, since $\nu\in A_1(\R^n)$,
$$\sup_{B_1} \nu^{-1}  = \( \inf_{B_1} \nu\)^{-1} \leq C \( \frac{\nu(B_1)}{\abs{B_1}}\)^{-1}.$$
On the other hand, let $B_j = B_{2^j}(0)$, $j\geq0$. By using the $A_1(\R^n)$-condition and Lemma \ref{prop:reverseHolder}
with $S=B_1$ and $B=B_j$,
\begin{align*}
\int_{\abs{x} > 1} \frac{\abs{u(x)}}{1 + \abs{x}^{n+2s}} \, dx
	&\leq \sum_{j=0}^{\infty} \int_{B_{j+1} \setminus B_j} \frac{\abs{u(x)}}{ \abs{x}^{n}} \, dx\\
	&\leq  c_n\sum_{j=1}^{\infty} \frac{1}{( 2^j)^{n}} \int_{B_j} \abs{u(x)}\, dx\\
	&\leq c_{n}  \norm{u}_{L^1(\R^n, \nu)} \sum_{j=1}^{\infty} \frac{1}{( 2^j)^{n}}\sup_{x\in B_j} \nu^{-1}(x) \\
	&\leq C\norm{u}_{L^1(\R^n, \nu)} \sum_{j=1}^{\infty} \frac{1}{( 2^j)^{n}}\frac{\abs{B_j}^\delta}{\nu(B_j)}|B_j|^{1-\delta} \\
	&\leq C\norm{u}_{L^1(\R^n, \nu)}\frac{|B_1|^\delta}{\nu(B_1)} \sum_{j=1}^{\infty} \frac{(2^{jn})^{1-\delta}}{( 2^j)^{n}}
	\leq C_{n, \nu} \norm{u}_{L^1(\R^n, \nu)}.
\end{align*}
The result for $p=1$ follows by combining the previous estimates.
\end{proof}

\subsection{The heat semigroup on weighted spaces}

Recall the definition of the classical heat semigroup $\{e^{t\Delta}\}_{t\geq0}$ on $\R^n$:
\begin{equation}\label{eq:etDeltau}
e^{t\Delta}u(x)=\int_{\R^n}W_t(x-y)u(y)\,dy=\frac{1}{(4\pi t)^{n/2}}\int_{\R^n}e^{-|x-y|^2/(4t)}u(y)\,dy
\end{equation}
for $x\in\R^n$, $t>0$. We believe that the following result belongs to the folklore, but we provide a proof
for the sake of completeness.

\begin{thm} \label{Prop:semigroup}
Let $\nu \in A_p(\R^n)$ and $u \in L^p(\R^n, \nu)$, $1 \leq p < \infty$. The following hold.
\begin{enumerate}[$(1)$]
	\item The integral defining $e^{t\Delta} u(x)$ in \eqref{eq:etDeltau} is absolutely convergent for $x\in\R^n$, $t>0$,
	and
	$$ \sup_{t>0} \abs{e^{t\Delta}u(x)} \leq Mu(x)$$
	for almost every $ x \in \R^n $.

	\item $e^{t\Delta}u(x)\in C^{\infty}((0, \infty) \times \R^n)$ and $\partial_t(e^{t\Delta}u)=\Delta(e^{t\Delta}u)$ in $\R^n\times(0,\infty)$.

	\item  $\displaystyle{\norm{e^{t\Delta}u}_{L^p(\R^n,\nu)} \leq C_{n,p,\nu} \norm{u}_{L^p(\R^n,\nu)}}$, where $C_{n,p,\nu}>0$.
		
	\item $\displaystyle{\lim_{t \to 0^+} e^{t\Delta} u(x) = u(x)}$ for almost every $ x \in \R^n $.
	
	\item $\displaystyle{ \lim_{t \to 0^+} \norm{e^{t \Delta} u - u}_{L^p(\R^n, \nu)}= 0}$. 
	
	\item If $u \in W^{2,p}(\R^n,\nu)$, then $e^{t \Delta} \Delta u = \Delta e^{t \Delta} u$.
	
	\item $\displaystyle{\lim_{\varepsilon \to 0} \norm{\int_{\abs{x-y}<\varepsilon} W_t(x-y) u(y) \, dy}_{L^p(\R^n, \nu)} = 0}$.
\end{enumerate}
\end{thm}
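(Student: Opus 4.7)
The plan is to prove the seven items in the stated order, since item (1) is the cornerstone that drives the $L^p$ estimate (3), the a.e.~convergence (4), and the $L^p$ convergence (5), while (2), (6), and (7) are essentially computations built on the smoothness and Gaussian decay of the heat kernel $W_t$. A recurring theme is that $W_t(x)$ is positive, radial, radially decreasing, Schwartz in $x$, and $\|W_t\|_{L^1(\R^n)}=1$, so every convolution estimate reduces to applying Lemma \ref{LemmaDuo}, the $L^p(\R^n,\nu)$ boundedness of $M$, or---when $p=1$---the $A_1(\R^n)$ condition $M\nu\leq C\nu$ combined with Fubini.

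For (1), Lemma \ref{LemmaDuo} applied to $\eta=W_t$ gives $|e^{t\Delta}u(x)|\leq Mu(x)$ uniformly in $t>0$; absolute convergence for a.e.~$x$ follows from $u\in L_s$ via Proposition \ref{prop:LsLp}. For (2), the smoothness and Gaussian decay of $W_t$ in $(x,t)$ justify differentiation under the integral sign, and then $\partial_t W_t=\Delta_x W_t$ yields the heat equation. For (3), when $1<p<\infty$ the pointwise bound from (1) and the $L^p(\R^n,\nu)$ boundedness of $M$ close the argument; when $p=1$, write $\|e^{t\Delta}u\|_{L^1(\R^n,\nu)}\leq\int|u(y)|\,e^{t\Delta}\nu(y)\,dy$ by Fubini and dominate $e^{t\Delta}\nu\leq M\nu\leq C\nu$ by $A_1$. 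For (4), first establish pointwise convergence for $\varphi\in C_c^\infty(\R^n)$ (immediate from uniform continuity and $\|W_t\|_{L^1}=1$), then use the maximal bound in (1) to show that the set of $u\in L^p(\R^n,\nu)$ for which a.e.~convergence holds is closed, concluding by density of $C_c^\infty(\R^n)$ in $L^p(\R^n,\nu)$. For (5), apply dominated convergence with dominant $Mu+|u|\in L^p(\R^n,\nu)$ when $1<p<\infty$; for $p=1$, argue by density through (3), reducing to $\varphi\in C_c^\infty(\R^n)$, where $\|e^{t\Delta}\varphi-\varphi\|_{L^1(\R^n,\nu)}$ is split over a ball containing $\supp\varphi$ (uniform convergence times finite $\nu$-mass) and its complement (super-exponential Gaussian tail dominating the doubling growth of $\nu$).

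For (6), approximate $u\in W^{2,p}(\R^n,\nu)$ by $C_c^\infty(\R^n)$ functions $u_k$ (density being standard for $A_p$ weights via mollification and truncation), where $\Delta(W_t\ast u_k)=W_t\ast\Delta u_k$ is immediate by passing the derivatives through the integral, then pass to the limit using (3) applied to the convergent sequences $u_k\to u$ and $\Delta u_k\to\Delta u$ in $L^p(\R^n,\nu)$. For (7), use $W_t(x-y)\leq W_t(0)=(4\pi t)^{-n/2}$ on $\{|x-y|<\varepsilon\}$, so the truncated convolution is pointwise bounded by $W_t(0)|B_\varepsilon|\,Mu(x)=C_{n,t}\,\varepsilon^n Mu(x)$; dominated convergence in $L^p(\R^n,\nu)$ disposes of $1<p<\infty$, while for $p=1$ we use Fubini together with the $A_1$ inequality to obtain $\|g_\varepsilon\|_{L^1(\R^n,\nu)}\leq C_{n,t}\,\varepsilon^n\|u\|_{L^1(\R^n,\nu)}$. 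The main obstacle will be uniformly treating the case $p=1$ throughout: since $M$ is only weak-$(1,1)$, every place where $1<p<\infty$ would let us either invoke the $L^p(\R^n,\nu)$ boundedness of $M$ or use $Mu\in L^p(\R^n,\nu)$ as a dominant must be rerouted via Fubini and the pointwise $A_1$ bound $M\nu\leq C\nu$.
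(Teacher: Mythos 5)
Your proposal is correct, and for items (1)--(4) it coincides with the paper's proof essentially verbatim: Lemma \ref{LemmaDuo} with $\eta=W_t$ for (1), differentiation under the integral for (2), the $L^p(\R^n,\nu)$-boundedness of $M$ (resp.\ Fubini plus $M\nu\leq C\nu$ when $p=1$) for (3), and the density-plus-maximal-operator closedness argument for (4). You diverge on the remaining items, in each case validly. For (5) with $p=1$, the paper avoids your ball/Gaussian-tail splitting entirely by writing $e^{t\Delta}\varphi-\varphi=\int_0^t e^{r\Delta}\Delta\varphi\,dr$ for $\varphi\in C^\infty_c(\R^n)$ and integrating in $L^1(\R^n,\nu)$ using part (3), which gives $\norm{e^{t\Delta}\varphi-\varphi}_{L^1(\R^n,\nu)}\leq Ct\norm{\Delta\varphi}_{L^1(\R^n,\nu)}$ with no need to quantify the growth of $\nu$ against the Gaussian tail; your route works but requires the polynomial bound $\nu(B_{2^k})\leq C2^{knp}\nu(B_1)$, which you should state explicitly. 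For (6), the paper tests $\Delta e^{t\Delta}u$ against $\varphi\in C^\infty_c(\R^n)$ and moves the Laplacian from $\varphi$ to $u$ via Fubini and the definition of weak derivative, whereas you approximate in $W^{2,p}(\R^n,\nu)$ and pass to the limit with (3); both are fine (the paper itself invokes the Turesson density result elsewhere), though your limit passage should note that $L^p(\R^n,\nu)$-convergence implies distributional convergence via Proposition \ref{prop:LsLp}. For (7), your crude bound $W_t(x-y)\leq(4\pi t)^{-n/2}$ yields the quantitative rate $C_{n,t}\,\varepsilon^n$ in both $\norm{\cdot}_{L^p(\R^n,\nu)}$ and $\norm{\cdot}_{L^1(\R^n,\nu)}$, which is actually slightly sharper than the paper's dominated-convergence argument and renders the appeal to dominated convergence for $1<p<\infty$ unnecessary.
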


\begin{proof}
Let $u\in L^p(\R^n,\nu)$, $\nu\in A_p(\R^n)$, for $1\leq p<\infty$.

For $(1)$, we apply Lemma \ref{LemmaDuo} with $\eta(x)=W_t(x)$ and notice that $\|W_t\|_{L^1(\R^n)}=1$,
for each fixed $t>0$.

To prove $(2)$, we recall that $W_t(x)\in C^\infty(\R^n\times(0,\infty))$, $\partial_tW_t=\Delta W_t$ in $\R^n\times(0,\infty)$
and that there exists $c>0$ such that $|\partial_tW_t(x)|\leq\frac{c}{t}W_{ct}(x)$ for each $t>0$ and $x\in\R^n$.
Thus, we can differentiate inside of the integral in \eqref{eq:etDeltau} to find that $e^{t\Delta}u(x)\in C^\infty(\R^n\times(0,\infty))$
and solves the heat equation.

If $1<p<\infty$, then part $(1)$ and the boundedness of the maximal function $M$ show that
$\norm{e^{t\Delta}u}_{L^p(\R^n, \nu)} \leq C \norm{u}_{L^p(\R^n, \nu)}$.
If $p=1$, as in part $(1)$ and by using the $A_1(\R^n)$-condition,
\begin{align*}
\norm{e^{t\Delta}u}_{L^1(\R^n, \nu)}
	&\leq \int_{\R^n}  \abs{u(y)} \( \int_{\R^n}  W_t(x-y) \nu(x) \, dx \) dy
	\leq \int_{\R^n} \abs{u(y)} M \nu(y) \, dy\\
	&\leq C \int_{\R^n} \abs{u(x)} \nu(y) \, dy= C \norm{u}_{L^1(\R^n, \nu)}.
\end{align*}
Whence, $(3)$ holds.

To verify the almost everywhere limit in $(4)$, we only need to observe that $\lim_{t\to0^+}e^{t\Delta}\varphi(x)=\varphi(x)$
for every $x\in\R^n$ whenever $\varphi\in C^\infty_c(\R^n)$, that $C^\infty_c(\R^n)$ is dense in $L^p(\R^n,\nu)$
and that, by part $(1)$, the maximal operator
$$
T^* u(x) = \sup_{t>0} \abs{e^{t \Delta}u(x)}
$$
is bounded from $L^p(\R^n,\nu)$ into weak-$L^p(\R^n,\nu)$ (see, for instance, \cite[Theorem~2.2]{Duo}).

For $(5)$, notice that if $\varphi\in C^\infty_c(\R^n)$, then, as in part $(1)$,
\begin{align*}
\abs{e^{t \Delta}\varphi(x) - \varphi(x)}
	&= \abs{ \int_0^t \partial_s e^{s \Delta} \varphi(x) \, ds}\\
	&\leq \int_0^t \abs{e^{s \Delta} \Delta \varphi(x)} \, ds
	\leq C M({\Delta \varphi})(x) \, t.
\end{align*}
For $1 < p < \infty$,
$$\norm{e^{t\Delta} \varphi - \varphi}_{L^p(\R^n, \nu)} 
	\leq C \norm{\Delta \varphi}_{L^p(\R^n, \nu)} t
	\to 0$$
as $t \to 0^+$. 
If $p=1$, then by part $(3)$, 
\begin{align*}
\norm{e^{t\Delta} \varphi - \varphi}_{L^1(\R^n, \nu)} 
	&= \int_{\R^n}  \abs{e^{t \Delta}\varphi(x) - \varphi(x)} \nu(x) \, dx \\
	&\leq  \int_{\R^n} \int_0^t \abs{e^{s \Delta} \Delta \varphi(x)} \nu(x) \, ds \, dx \\
	&= \int_0^t \norm{e^{s\Delta} \Delta \varphi}_{L^1(\R^n, \nu)} \, ds\\
	&\leq  \int_0^t C \norm{\Delta \varphi}_{L^1(\R^n, \nu)} \, ds 
	=  C t\norm{\Delta \varphi}_{L^1(\R^n, \nu)} \to 0
\end{align*}
as $t \to 0^+$. We then use the density of $C^\infty_c(\R^n)$ in $L^p(\R^n,\nu)$.

For $(6)$, let $\varphi \in C^{\infty}_c(\R^n)$ and observe that
\begin{align*}
\int_{\R^n}  \Delta e^{t \Delta} u(x) \varphi(x) \, dx
	&= \int_{\R^n}  e^{t \Delta} u(x) \Delta \varphi(x) \, dx\\
	&= \int_{\R^n} \int_{\R^n} W_t(x-y) u(y) \Delta \varphi(x) \, dy \, dx\\
	&= \int_{\R^n} W_t(z)\bigg[\int_{\R^n}  u(y) \Delta_y \varphi(x+y) \, dy \bigg]\, dx \\
	&= \int_{\R^n} W_t(z)\bigg[\int_{\R^n}  \Delta u(y) \varphi(x+y) \, dy \bigg]\, dx \\
	&= \int_{\R^n} e^{t \Delta} \Delta u(x) \, \varphi(x) \, dx.
\end{align*}
Then $\Delta e^{t\Delta}u(x)=e^{t\Delta}\Delta u(x)$, for almost every $x \in \R^n$.

Let us finally prove $(7)$. Observe that, by part $(1)$,
$$\int_{\abs{x-y}< \varepsilon} W_t(x-y) \abs{u(y)}  \, dy  \leq Mu(x).$$
 For $1<p<\infty$,  $Mu \in L^p(\R^n, \nu)$ so, by the Dominated Convergence Theorem,
\begin{align*}
\lim_{\varepsilon \to 0} \bigg\|\int_{\abs{x-y}< \varepsilon}& W_t(x-y) \abs{u(y)}  \, dy\bigg\|_{L^p(\R^n, \nu)}^p\\
 &= \int_{\R^n} \lim_{\varepsilon \to 0}\( \int_{\abs{x-y}< \varepsilon} W_t(x-y) \abs{u(y)}  \, dy\)^p \nu(x) \, dx = 0.
 \end{align*}
For $p=1$,
\begin{align*}
\bigg\|\int_{\abs{x-y}< \varepsilon} W_t(x-y) {u(y)}  \, dy \bigg\|_{L^1(\R^n, \nu)}
	&\leq  \int_{\R^n} \bigg[\abs{u(y)} \int_{\abs{x-y}< \varepsilon} W_t(x-y)  \nu(x) \, dx \bigg]\, dy
\end{align*}
and, by part $(1)$,
\begin{align*}
\abs{u(y)} \int_{\abs{x-y}< \varepsilon} W_t(x-y)  \nu(x) \, dx 
	&\leq  \abs{u(y)}M\nu(y) 
\leq C \abs{u(y)} \nu(y) \in L^1(\R^n)
\end{align*}
for a.e.~$y \in \R^n$. Therefore, $(7)$ holds for $p=1$ by the Dominated Convergence Theorem.
\end{proof}

\subsection{The maximal estimate \eqref{eq:Ms}}

\begin{thm}\label{prop:Mbound}
There exists a constant $C_{n}>0$ such that for any
$u \in W^{2,p}(\R^n,\nu)$, $\nu\in A_p(\R^n)$, $1\leq p<\infty$, we have
$$\sup_{0 < s < 1}\sup_{\varepsilon>0} \abs{c_{n,s} \int_{\abs{y}>\varepsilon} \frac{u(x-y) - u(x)}{\abs{y}^{n+2s}} \, dy} \leq C_n \( M (D^2 u)(x) + Mu(x)\)$$
for almost every $x\in\R^n$.
\end{thm}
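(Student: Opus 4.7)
The plan is to adapt the one-dimensional argument of Theorem \ref{Prop:lp max} to the symmetric principal-value kernel of the fractional Laplacian, using the symmetrization $y\mapsto -y$ to kill the first-order Taylor term and exploiting the bound $c_{n,s}\sim s(1-s)$ from \eqref{eq:cns} to control constants uniformly in $s\in(0,1)$.

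\emph{Step 1: Split and symmetrize.} Denote $J_{s,\varepsilon}(x):=c_{n,s}\int_{|y|>\varepsilon}\frac{u(x-y)-u(x)}{|y|^{n+2s}}\,dy$. If $\varepsilon\geq 1$, treat $J_{s,\varepsilon}$ as a single ``far'' piece; if $\varepsilon<1$, split at $|y|=1$ into a near piece $N_{s,\varepsilon}$ and a far piece $F_s$. On any symmetric annulus, the substitution $y\mapsto-y$ produces the identity
$$N_{s,\varepsilon}(x)=\frac{c_{n,s}}{2}\int_{\varepsilon<|y|<1}\frac{u(x+y)+u(x-y)-2u(x)}{|y|^{n+2s}}\,dy,$$
which replaces the one-sided difference by a centered second difference, killing the linear Taylor contribution.

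\emph{Step 2: Near piece.} Second-order Taylor with integral remainder gives
$$|u(x+y)+u(x-y)-2u(x)|\leq |y|^2\int_0^1(1-r)\bigl[|D^2u(x+ry)|+|D^2u(x-ry)|\bigr]\,dr.$$
Enlarging $\{\varepsilon<|y|<1\}$ to $\{|y|<1\}$ (so the bound is uniform in $\varepsilon$), swap the $r$- and $y$-integrations and, for each fixed $r$, substitute $z=ry$ to get $\int_{|y|<1}|D^2u(x\pm ry)|\,|y|^{-(n+2s-2)}dy=r^{2s-2}\int_{|z|<r}|D^2u(x\pm z)|\,|z|^{-(n+2s-2)}\,dz$. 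When $n+2s>2$, the kernel $|z|^{-(n+2s-2)}\chi_{B_r}$ is radial and radially decreasing with $L^1$-norm $C_nr^{2-2s}/(1-s)$, so Lemma \ref{LemmaDuo} bounds the inner integral by $C_nr^{2-2s}M(D^2u)(x)/(1-s)$. The $r$-powers cancel and integration against $(1-r)$ produces $C_nM(D^2u)(x)/(1-s)$; multiplication by $c_{n,s}\sim s(1-s)$ neutralizes the $(1-s)^{-1}$ factor, yielding $|N_{s,\varepsilon}(x)|\leq C_nM(D^2u)(x)$.

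\emph{Step 3: Far piece.} Setting $R=\max(\varepsilon,1)\geq 1$, split
$$c_{n,s}\int_{|y|>R}\frac{u(x-y)-u(x)}{|y|^{n+2s}}\,dy=c_{n,s}\int_{|y|>R}\frac{u(x-y)}{|y|^{n+2s}}\,dy-c_{n,s}u(x)\int_{|y|>R}\frac{dy}{|y|^{n+2s}}.$$
The second term equals $c_{n,s}|S^{n-1}|u(x)/(2sR^{2s})$, bounded by $C_n|u(x)|\leq C_nMu(x)$ using $c_{n,s}\sim s(1-s)$ and $R\geq 1$. For the first, $\eta(y)=|y|^{-(n+2s)}\chi_{\{|y|>R\}}$ is radially decreasing with $\|\eta\|_1=|S^{n-1}|/(2sR^{2s})$, and Lemma \ref{LemmaDuo} combined with the same $s$-cancellation yields $\leq C_nMu(x)$.

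\emph{Main obstacle.} The delicate case is $n=1$ with $s\leq 1/2$, where $|z|^{-(n+2s-2)}=|z|^{1-2s}$ is radially increasing so Lemma \ref{LemmaDuo} does not apply directly in Step 2. There one bounds $|z|^{2-n-2s}\leq r^{2-n-2s}$ on $\{|z|<r\}$ and uses $\int_{|z|<r}|D^2u(x\pm z)|\,dz\leq C_nr^nM(D^2u)(x)$, producing the same $C_nr^{2-2s}M(D^2u)(x)$ estimate and allowing Step 2 to proceed. Beyond this dimensional/range bookkeeping, the main care throughout is to track the $s$-dependent constants so that the vanishing of $c_{n,s}$ at $s=0,1$ exactly neutralizes the divergences of the integrals at the respective endpoints, yielding a final constant depending only on the dimension $n$.
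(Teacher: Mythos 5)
Your proof is correct and follows essentially the same route as the paper: split at $|y|=1$, use the symmetry of the annulus to remove the first-order Taylor term (you symmetrize to the centered second difference, the paper inserts $\nabla u(x)\cdot y$ into the numerator --- the same device), and let $c_{n,s}\sim s(1-s)$ absorb the endpoint divergences; the only structural difference is that you invoke Lemma \ref{LemmaDuo} for the near piece where the paper runs a dyadic annulus decomposition by hand. One small repair in Step 3: the kernel $|y|^{-(n+2s)}\chi_{\{|y|>R\}}$ is \emph{not} radially decreasing (it jumps up at $|y|=R$), so Lemma \ref{LemmaDuo} does not apply to it as written; replace it by the decreasing majorant $\min\{R^{-(n+2s)},|y|^{-(n+2s)}\}$, whose $L^1$-norm is $C_nR^{-2s}\big(1+\tfrac{1}{2s}\big)\le C_n\big(1+\tfrac{1}{2s}\big)$ since $R\ge1$, after which the same cancellation against $c_{n,s}$ goes through (this is exactly the majorant the paper uses for $R=1$). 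Your explicit treatment of the case $n=1$, $s\le 1/2$, where $|z|^{-(n+2s-2)}$ is increasing, addresses a point the paper's dyadic argument silently glosses over, so that addition is welcome.
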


\begin{proof}
Define the operator $T_{s,\varepsilon}$ on $W^{2,p}(\R^n, \nu)$ by
$$T_{s, \varepsilon}u(x) =  c_{n,s} \int_{\abs{y}>\varepsilon} \frac{u(x-y) - u(x)}{\abs{y}^{n+2s}} \, dy.$$
We will show that there is a constant $C= C_n>0$ such that
$$\abs{T_{s,\varepsilon} u(x)} \leq C\( M ({D^2 u})(x) + Mu(x)\) \quad\hbox{for a.e.}~ x \in \R^n$$
from which the statement follows. We write
$$T_{s,\varepsilon}u(x)
	= c_{n,s} \int_{\varepsilon < \abs{y} <1}  \frac{u(x-y) - u(x)}{\abs{y}^{n+2s}} \, dy + c_{n,s} \int_{\abs{y} >1}  \frac{u(x-y) - u(x)}{\abs{y}^{n+2s}} \, dy
	= I + II.$$
Let us first estimate the second term. Take $\eta(x)=\chi_{\{\abs{x}\leq1\}}(x)+\abs{x}^{-n-2s} \chi_{\{\abs{x}>1\}}(x)$ in Lemma \ref{LemmaDuo}
and use \eqref{eq:cns} to get
\begin{align*}
\abs{II}&\leq c_{n,s} \int_{\abs{y}>1} \frac{\abs{u(x-y)}}{\abs{y}^{n+2s}} \, dy + c_{n,s} \abs{u(x)} \int_{\abs{y}>1} \frac{1}{\abs{y}^{n + 2s}} \,  dy \\
	&\leq C_ns(1-s)\bigg((\abs{u}*\eta)(x)+\frac{|u(x)|}{s}\bigg) \\
	&\leq C_ns(1-s)\bigg(\( \frac{1+2s}{2s} \) Mu(x)+\frac{|u(x)|}{s}\bigg) \\
	&\leq C_n Mu(x).
\end{align*}
Consider now the first term, that we rewrite as
$$I = c_{n,s} \int_{\varepsilon < \abs{y} <1}  \frac{u(x-y) - u(x) + \nabla u(x) \cdot y}{\abs{y}^{n+2s}} \, dy.$$
Since $u\in W^{2,p}(\R^n,\nu)$ and \eqref{eq:cns} holds, for a.e.~$x\in\R^n$ we can estimate
\begin{align*}
\abs{I}
	&\leq c_{n,s}  \int_{\varepsilon < \abs{y} <1} \frac{\abs{u(x-y) - u(x) + \nabla u(x) \cdot y}}{\abs{y}^{n+2s}} \, dy\\
	&\leq c_{n,s} \int_{\varepsilon < \abs{y} <1} \frac{\abs{y}^2}{\abs{y}^{n+2s}} \int_{0}^1 (1-t)\, \abs{D^2u(x-ty)} \, dt \, dy \\
	&= c_{n,s} \int_0^1 (1-t) \int_{\varepsilon < \abs{y} < 1} \frac{\abs{D^2u(x-ty)}}{\abs{y}^{n-2(1-s)}} \, dy \, dt \\
	&\leq c_{n,s} \int_0^1 (1-t)\,t^{-2(1-s)}  \int_{\abs{y} < t}\frac{\abs{ D^2u(x-y)}}{\abs{y}^{n - 2(1-s)}} \, dy \, dt \\
	&\leq c_{n,s} \int_0^1 (1-t)\,t^{-2(1-s)} \sum_{k=0}^{\infty}  \int_{2^{-(k+1)}t  < \abs{y} < 2^{-k}t}\frac{\abs{ D^2u(x-y)}}{\abs{y}^{n - 2(1-s)}} \, dy \, dt\\
	&\leq c_{n,s} \int_0^1 (1-t)\,t^{-2(1-s)} \sum_{k=0}^{\infty} \frac{1}{(2^{-(k+1)}t)^{n-2(1-s)}} \int_{\abs{y} < 2^{-k}t} \abs{ D^2u(x-y)} \, dy \, dt\\
	&\leq C_ns(1-s)2^{n-2(1-s)}M({D^2 u})(x) \int_0^1 (1-t)\bigg[ \sum_{k=0}^{\infty}  \frac{1}{\(2^{2(1-s)}\)^k}\bigg] \, dt\\
	&\leq C_n\frac{s(1-s)}{4^{1-s}-1}M({D^2u})(x)\leq C_nM(D^2u)(x)
\end{align*}
where in the last line we applied the estimate $4^{1-s}-1\geq c(1-s)$, for any $0<s<1$.
Therefore, $\abs{T_{s,\varepsilon}u(x)} \leq \abs{I} + \abs{II} \leq C_n\(M({D^2u})(x) +  Mu(x)\)$ for a.e $x\in\R^n$.
\end{proof}

\section{Proof of Theorem \ref{thm:laplacian1}}\label{Section:proofs Laplacians}

\subsection{Proof of Theorem \ref{thm:laplacian1} $(a)$}

The steps in the proof of part $(a)$ are similar to the steps in the proof of Theorem \ref{thm:derivative1} $(a)$. 
 
\medskip

\noindent
\underline{\bf Step 1.} The semigroup formula in \eqref{eq:laplacian semigroup} defines a function in $L^p(\R^n, \nu)$.

\smallskip

Let us begin by writing 
\begin{equation}\label{eq:delta Is IIs}
\begin{aligned}
\frac{1}{\Gamma(-s)} \int_0^{\infty} &| e^{t \Delta} u(x) - u(x)| \frac{dt}{t^{1+s}} \\
&= \frac{1}{\Gamma(-s)} \int_0^1 | e^{t \Delta} u(x) - u(x)| \frac{dt}{t^{1+s}} + \frac{1}{\Gamma(-s)} \int_1^{\infty} | e^{t \Delta} u(x) - u(x)| \frac{dt}{t^{1+s}} \\
&= I + II.
\end{aligned}
\end{equation}
To study $I$, recall Theorem \ref{Prop:semigroup} and observe for $t \in [0,1]$ that
$$\norm{e^{t\Delta} u - u}_{L^p(\R^n, \nu)} 
	\leq \int_0^t \norm{e^{r \Delta}( \Delta u)}_{L^p(\R^n, \nu)} \, dr 
	\leq C \norm{\Delta u}_{L^p(\R^n, \nu)} t.$$
Therefore
\begin{equation}\label{eq:tbound}
\begin{aligned}
\norm{I}_{L^p(\R^n, \nu)}
	&\leq \frac{1}{\abs{\Gamma(-s)}} \int_0^1 \norm{e^{t\Delta}u - u}_{L^p(\R^n, \nu)} \frac{dt}{t^{1+s}}\\
	&=\frac{C}{\abs{\Gamma(-s)}} \,\norm{\Delta u}_{L^p(\R^n, \nu)} \int_0^1 t^{-s} \, dt
	= C\frac{s}{\abs{\Gamma(2-s)} } \, \norm{\Delta u}_{L^p(\R^n, \nu)}.
\end{aligned}
\end{equation}
For $II$, in view of Theorem \ref{Prop:semigroup},
\begin{equation}\label{eq:tbound2}
\begin{aligned}
\norm{II}_{L^p(\R^n, \nu)}
	&\leq \frac{1}{\abs{\Gamma(-s)}} \int_1^{\infty} \( \norm{e^{t \Delta} u}_{L^p(\R^n, \nu)} + \norm{u}_{L^p(\R^n, \nu)}\) \frac{dt}{t^{1+s}} \\
	&\leq \frac{1}{\abs{\Gamma(-s)}} \(C \norm{u}_{L^p(\R^n, \nu)} + \norm{u}_{L^p(\R^n, \nu)}\)\int_1^{\infty} \, \frac{dt}{t^{1+s}} \\
	&= \frac{C(1-s)}{\abs{\Gamma(2-s)}} \norm{u}_{L^p(\R^n, \nu)}.
\end{aligned}
\end{equation}
Therefore
\begin{equation}\label{eq:bound2}
\norm{\frac{1}{\Gamma(-s)} \int_0^{\infty} \( e^{t \Delta} u(x) - u(x) \) \frac{dt}{t^{1+s}}}_{L^p(\R^n, \nu)} \leq C \( \norm{u}_{L^p(\R^n, \nu)} + \norm{\Delta u}_{L^p(\R^n, \nu)}\) < \infty.
\end{equation}

\medskip

\noindent
\underline{\bf Step 2.} The distribution $(-\Delta)^su$ coincides with the semigroup formula in  \eqref{eq:laplacian semigroup}
for a.e. $x\in\R^n$. Therefore, $(-\Delta)^su$ is in $L^p(\R, \nu)$ and, by \eqref{eq:bound2}, we see that \eqref{eq:lpestimatelaplacian} holds. 

\smallskip

Since $C_c^{\infty}(\R^n)$ is dense in $W^{2,p}(\R^n, \nu)$ (see \cite{Turesson}), there exists a sequence $u_k \in C_c^{\infty}(\R^n)$ such that $u_k \to u$ in $W^{2,p}(\R^n, \nu)$. We consider the terms $I$ and $II$
as in \eqref{eq:delta Is IIs}
and, similarly,
\begin{align*}
(-\Delta)^s u_k(x) 
	&= \frac{1}{\Gamma(-s)} \int_0^{1} \( e^{t \Delta} u_k(x) - u_k(x)\) \frac{dt}{t^{1+s}} + \frac{1}{\Gamma(-s)} \int_1^{\infty} \( e^{t \Delta} u_k(x) - u_k(x)\) \frac{dt}{t^{1+s}} \\
	&= I_k + II_k.
\end{align*}
By \eqref{eq:tbound},
$$\norm{I_k - I}_{L^p(\R^n, \nu)}
	\leq C\frac{s}{\abs{\Gamma(2-s)}} \norm{\Delta (u_k - u)}_{L^p(\R^n, \nu)} \to 0 \quad\hbox{as}~k \to \infty.$$
Similarly, by \eqref{eq:tbound2},
$$\norm{II_k - II}_{L^p(\R^n, \nu)}
	= \frac{C(1-s)}{\Gamma(2-s)} \norm{u_k - u}_{L^p(\R^n, \nu)}\to 0  \quad\hbox{as}~k \to \infty.$$
Therefore,
\begin{equation}\label{eq:uktou}
(-\Delta)^s u_k(x)  \to  \frac{1}{\Gamma(-s)} \int_0^{\infty} \(e^{t \Delta}u(x) - u(x)\) \, \frac{dt}{t^{1+s}}
\end{equation}
in $L^p(\R^n, \nu)$ as $k \to \infty$.

Next, let $\varphi \in C^{\infty}_c(\R^n)$ and note that $(-\Delta)^s\varphi \in \S_s$. 
By Proposition \ref{prop:LsLp},
\begin{align*}
\bigg|\int_{\R^n} u_k(x) (-\Delta)^s \varphi(x) \, &dx -  \int_{\R^n} u(x) (-\Delta)^s \varphi(x) \, dx \bigg|\\
	&\leq C  \int_{\R^n} \frac{\abs{u_k(x) - u(x)}}{ 1+\abs{x}^{n+2s}} \, dx\\
	&\leq C \norm{u_k - u}_{L^p(\R^n, \nu)} 
	\to 0 \quad\hbox{as}~k \to \infty.
\end{align*}
In addition, by \eqref{eq:uktou},
\begin{align*}
\bigg|\int_{\R^n}&(-\Delta)^su_k(x)\varphi(x) \, dx -\frac{1}{\Gamma(-s)} \int_{\R^n} \int_0^{\infty} \( e^{t \Delta} u(x) - u(x)\) \frac{dt}{t^{1+s}}  \, \varphi(x) \, dx \bigg|\\
&\leq C \int_{\R^n} \abs{  (-\Delta)^su_k(x) -\frac{1}{\Gamma(-s)}  \int_0^{\infty} \( e^{t \Delta} u(x) - u(x)\) \frac{dt}{t^{1+s}}  } \,\frac{1}{1 + \abs{x}^{n+2s}} \, dx\\
&\leq C \norm{ (-\Delta)^su_k(x) -\frac{1}{\Gamma(-s)}\int_0^{\infty} \( e^{t \Delta} u(x) - u(x)\) \frac{dt}{t^{1+s}}  }_{L^p(\R^n, \nu)}\to 0
\end{align*}
as $k \to \infty$.
Therefore
\begin{align*}
\int_{\R^n} (-\Delta)^su(x)\, \varphi(x) \, dx
	&= \int_{\R^n} u(x) (-\Delta)^s \varphi(x) \, dx \\
	&= \lim_{k \to \infty}  \int_{\R^n} u_k(x) (-\Delta)^s \varphi(x) \, dx \\
	&= \lim_{k \to \infty} \int_{\R^n} (-\Delta)^s u_k(x) \, \varphi(x) \, dx \\
	&= \int_{\R^n}\bigg[ \frac{1}{\Gamma(-s)} \int_0^{\infty} \( e^{t \Delta} u(x) - u(x)\) \frac{dt}{t^{1+s}} \bigg] \, \varphi(x) \, dx,
\end{align*}
and so we obtain
$$(-\Delta)^su(x) = \frac{1}{\Gamma(-s)} \int_0^{\infty} \( e^{t \Delta} u(x) - u(x)\) \frac{dt}{t^{1+s}} \quad\text{for a.e.}~ x \in \R^n.$$

\medskip

\noindent
\underline{\bf Step 3.} The integral expression in \eqref{eq:laplacian pw} defines a function in $L^p(\R^n, \nu)$ for all $\varepsilon >0$.

\smallskip

For $\varepsilon >0$, define the operator $T_{\varepsilon}$ on $L^p(\R^n, \nu)$ by 
\begin{equation}\label{eq:TeTstar}
T_{\varepsilon}u(x) =c_{n,s} \int_{\abs{x-y}> \varepsilon} \frac{u(x) -u(y)}{\abs{x-y}^{n+2s}}\, dy.
\end{equation}
We claim that $T_{\vep} u(x) \in L^p(\R^n, \nu)$ for all $\vep >0$. Indeed,
for $1 < p < \infty$ this is immediate by Theorem \ref{prop:Mbound}: there exists $C>0$ such that
$$\norm{T_{\varepsilon}u}_{L^p(\R^n, \nu)} 
	\leq C \( \norm{M(D^2u)}_{L^p(\R^n, \nu)} + \norm{Mu}_{L^p(\R^n, \nu)} \) < \infty.$$
For $p=1$, we write
\begin{align*}
T_{\varepsilon}u(x) 
	&= c_{n,s}u(x)  \int_{\abs{x-y}>\varepsilon} \frac{1}{\abs{x-y}^{n+2s}}\, dy + c_{n,s} \int_{\abs{x-y}>\varepsilon} \frac{u(y)}{\abs{x-y}^{n+2s}}\, dy \\
	&= c_{n,s}\frac{C_n\varepsilon^{-2s}}{2s} u(x) + c_{n,s} \int_{\abs{x-y}>\varepsilon} \frac{u(y)}{\abs{x-y}^{n+2s}}\, dy.
\end{align*}
We only need to study the second term above.
By applying Lemma \ref{LemmaDuo} with  $\eta(y) = \chi_{\{\abs{y}\leq \varepsilon\}}(y)+\abs{y}^{-n-2s} \chi_{\{\abs{y}>\varepsilon\}}(y)$
and the $A_1(\R^n)$-condition on $\nu$, we find
\begin{align*}
 \norm{\int_{\abs{x-y}>\varepsilon} \frac{u(y)}{\abs{x-y}^{n+2s}}\, dy}_{L^1(\R^n, \nu)}
 	&\leq \int_{\R^n} \abs{u(y)} \int_{\abs{x-y}>\varepsilon} \frac{\nu(x)}{\abs{x-y}^{n+2s}} \, dx \, dy \\
	&\leq\int_{\R^n}|u(y)|(\nu\ast\eta)(y)\,dy \\
	&\leq C_{n,s,\varepsilon}\int_{\R^n}|u(y)|M\nu(y)\,dy \\
	&\leq C_{n,s,\varepsilon,\nu}\norm{u}_{L^1(\R^n, \nu)} < \infty.
\end{align*}

\medskip

\noindent
\underline{\bf Step 4.} The principal value in  \eqref{eq:laplacian pw} converges in  $L^p(\R^n, \nu)$
to the function $(-\Delta)^su$.

\smallskip

We write the semigroup formula \eqref{eq:laplacian semigroup} as 
\begin{align*}
(-\Delta)^s u(x) 
	 &= \frac{1}{\Gamma(-s)} \int_0^{1} \(\int_{\R^n} W_t(x-y) \(u(y) -u(x)\)\, dy \) \frac{dt}{t^{1+s}}\\
	 &\quad+ \frac{1}{\Gamma(-s)} \int_1^{\infty} \(\int_{\R^n} W_t(x-y) \(u(y) -u(x)\)\, dy \) \frac{dt}{t^{1+s}} \\
	 &= I + II
\end{align*}
and, similarly,
\begin{align*}
c_{n,s} \int_{\abs{x-y}>\varepsilon} &\frac{u(x) - u(y)}{\abs{x-y}^{n+2s}} \, dy\\
	&=\frac{1}{\Gamma(-s)} \int_{\abs{x-y}>\varepsilon}\(u(y) - u(x)\)  \(\int_0^{\infty} W_t(x-y) \, \frac{dt}{t^{1+s}}\)  dy \\
	&= \frac{1}{\Gamma(-s)} \int_0^{1} \int_{\abs{x-y}>\varepsilon} W_t(x-y) \( u(y) - u(x)\) \, dy \, \frac{dt}{t^{1+s}}\\
	&\quad + \frac{1}{\Gamma(-s)} \int_1^{\infty} \int_{\abs{x-y}>\varepsilon} W_t(x-y) \( u(y) - u(x)\) \, dy \, \frac{dt}{t^{1+s}}\\
	&= I_{\varepsilon} + II_{\varepsilon}.
\end{align*}
From Theorem \ref{Prop:semigroup} it follows that
\begin{align*}
&\norm{II - II_{\varepsilon}}_{L^p(\R^n, \nu)}\\
	&\quad= \norm{ \frac{1}{\Gamma(-s)} \int_1^{\infty}  \left[\(\int_{\abs{x-y}<\varepsilon} W_t(x-y) u(y) \, dy \) + u (x)  \int_{\abs{z}< \varepsilon} W_t(z) \, dz\right] \frac{dt}{t^{1+s}} }_{L^p(\R^n, \nu)}\\
	&\quad\leq C\int_1^{\infty} \(\norm{\int_{\abs{x-y}<\varepsilon} W_t(x-y) u(y) \, dy}_{L^p(\R^n, \nu)} + \norm{u}_{L^p(\R^n, \nu)} \int_{\abs{z}< \varepsilon} W_t(z) \, dz \)\frac{dt}{t^{1+s}}
	\to 0 
\end{align*}
as $\varepsilon \to 0^+$.
We next show $\norm{I - I_{\varepsilon}}_{L^p(\R^n, \nu)} \to 0$ as $\varepsilon \to 0^+$ as well to conclude the proof. Indeed,
$$\norm{I - I_{\varepsilon}}_{L^p(\R^n, \nu)}
	= \norm{ \frac{1}{\Gamma(-s)} \int_0^{1} \(\int_{\abs{y}< \varepsilon} W_t(y) \(u(x-y) - u(x)\) \, dy\) \frac{dt}{t^{1+s}}}_{L^p(\R^n, \nu)}$$
By Taylor's Remainder Theorem and \eqref{eq:exponential},
\begin{align*}
\bigg|\int_{\abs{y}< \varepsilon}& W_t(y) \(u(x-y) - u(x)\) \, dy \bigg|\\
	&\leq  \int_{\abs{y}< \varepsilon} W_t(y)|y|^2\( \int_0^1 (1-r) \abs{D^2u(x-ry)} \, dr\) dy\\
	&\leq C t  \int_{\abs{y}< \varepsilon} W_{2t}(y) \( \int_0^1 (1-r) \abs{D^2u(x-ry)} \, dr\) dy\\
	&= Ct \int_0^1 (1-r)  \(\int_{\abs{y}< \varepsilon} W_{2t}(y)\abs{D^2u(x-ry)} \, dy\) dr\\
	&= Ct \int_0^1(1-r) \( \int_{\abs{y}<r\varepsilon} W_{2tr^2}(y) \abs{D^2u(x-y)} \,dy\) dr.
\end{align*}
In particular, since $D^2u\in L^p(\R^n,\nu)$, by Theorem \ref{Prop:semigroup},
\begin{equation}\label{eq:epsilontozero}
\bigg|\int_{\abs{y}< \varepsilon} W_t(y) \(u(x-y) - u(x)\) \, dy \bigg|\to 0 \quad\hbox{as}~\varepsilon \to 0^+
\end{equation}
a.e.~in $\R^n$. We continue estimating by
\begin{align*}
\bigg|\int_{\abs{y}< \varepsilon}& W_t(y) \(u(x-y) - u(x)\) \, dy \bigg|\\
	&\leq Ct \int_0^1(1-r) \( \int_{\R^n} W_{2tr^2}(y) \abs{D^2u(x-y)} \, dy\) dr\\
	&\leq  Ct M({D^2u})(x)\int_0^1(1-r) \, dr= C t  M({D^2u})(x).
\end{align*}
Whence, for $1 < p < \infty$, we have
\begin{align*}
|I-I_\varepsilon|
\leq CM({D^2u})(x) \int_0^1  t \,  \frac{dt}{t^{1+s}}
\leq CM({D^2u})(x) \in L^p(\R^n, \nu)
\end{align*}
where $C>0$ is independent of $\varepsilon$.
Thus, by the Dominated Convergence Theorem and \eqref{eq:epsilontozero},
$\lim_{\varepsilon \to 0^+} \norm{I - I_{\varepsilon}}_{L^p(\R^n, \nu)} =0$.
When $p=1$, by following the computations above and by Theorem \ref{Prop:semigroup}, we get
\begin{align*}
\|I -& I_{\varepsilon}\|_{L^1(\R^n, \nu)} \\
	&\leq C \int_{\R^n} \int_0^1 t \int_0^1(1-r) \( \int_{\abs{y}<\varepsilon} W_{2tr^2}(x-y) \abs{D^2u(y)} \, dy\) dr \frac{dt}{t^{1+s}} \nu(x) \, dx\\
	&=  C \int_0^1 \int_0^1 (1-r) \int_{\abs{y}<\varepsilon} \abs{D^2u(y)} \(\int_{\R^n} W_{2tr^2}(x-y) \nu(x) \, dx\) dy \, dr \, \frac{dt}{t^s}\\
	&\leq C\int_0^1 \int_0^1 (1-r)\( \int_{\abs{y}<\varepsilon} \abs{D^2u(y)}   M\nu(y) dy \) dr \, \frac{dt}{t^s}\\
	&\leq C \int_0^1 \int_0^1 (1-r) \(\int_{\abs{y}<\varepsilon} \abs{D^2u(y)}   \nu(y)  dy \) dr \, \frac{dt}{t^s}\\
	&= C\int_{\abs{y}<\varepsilon} \abs{D^2u(y)}   \nu(y)\,dy \to 0 \quad\hbox{as}~\varepsilon \to 0^+.
\end{align*}

\medskip

\noindent
\underline{\bf Step 5.}  The principal value in  \eqref{eq:laplacian pw} converges almost everywhere in $\R^n$
to $(-\Delta)^su$.

\smallskip

It follows from Theorem \ref{prop:Mbound} and the properties of $M$ that the operator $T^*$ defined by
$$T^*u(t) = \sup_{\varepsilon>0} |T_\varepsilon u(x)|\quad\hbox{for}~u\in W^{2,p}(\R^n,\nu),$$
where $T_\varepsilon$ is defined as in \eqref{eq:TeTstar}, satisfies the estimates
$$\norm{T^*u}_{L^p(\R^n,\nu)} \leq  C \norm{u}_{W^{2,p}(\R^n,\nu)}\quad\hbox{for any}~u\in W^{2,p}(\R^n,\nu),~1 < p < \infty$$
and
$$\nu\big(\{x\in\R^n:|T^*u(x)|>\lambda\}\big)\leq\frac{C}{\lambda}\|u\|_{W^{2,1}(\R^n,\nu)}\quad\hbox{for any}~u\in W^{2,1}(\R^n,\nu),
~\lambda>0$$
where $C>0$ is independent of $u$. In particular, $T^*$ is bounded from $W^{2,p}(\R^n,\nu)$ into weak-$L^p(\R^n,\nu)$, for
any $1\leq p<\infty$. With these estimates, as in Step 5 of the proof of Theorem \ref{thm:derivative1}$(a)$,
we find that the set
$$E= \left\{ u \in W^{2,p}(\R^n, \nu) : \lim_{\varepsilon \to 0^+} T_{\varepsilon} u(x) = (-\Delta)^su(x) \,\,\hbox{a.e.}\right\}$$
is closed in $W^{2,p}(\R^n, \nu)$. Since $C^{\infty}_c(\R^n) \subset E$, by density, we obtain $E= W^{2,p}(\R^n, \nu)$.

\medskip

\noindent\underline{\textbf{Step 6.}}  The limit as $s\to1^-$ in \eqref{eq:limits} holds in $L^p(\R, \nu)$.

\smallskip

Fix $\varepsilon >0$.  By Theorem \ref{Prop:semigroup}, there exists $\delta >0$ such that 
$$
\norm{e^{t\Delta} \Delta u - \Delta u}_{L_p(\R^n, \nu)} < \varepsilon\quad \text{when}~\abs{t} < \delta.
$$
We write
\begin{align*}
(-\Delta)^s u(x) &= \frac{1}{\Gamma(-s)} \int_0^{\delta} \(e^{t \Delta} u(x) - u(x)\) \frac{dt}{t^{1+s}} + \frac{1}{\Gamma(-s)} \int_{\delta}^{\infty} \(e^{t \Delta} u(x) - u(x)\) \frac{dt}{t^{1+s}}\\
& = I_\delta + II_\delta.
\end{align*}
Looking at the second term, by Theorem \ref{Prop:semigroup},
\begin{align*}
\norm{II_\delta}_{L^p(\R^n, \nu)}
	&\leq \frac{1}{\abs{\Gamma(-s)}} \int_{\delta}^{\infty} \(\norm{e^{t \Delta} u}_{L^p(\R^n, \nu)} + \norm{u}_{L^p(\R^n, \nu)}\) \frac{dt}{t^{1+s}}\\
	&\leq \frac{C \norm{u}_{L^p(\R^n, \nu)}}{\abs{\Gamma(-s)}} \int_{\delta}^{\infty} t^{-1-s} \, dt
	= C \norm{u}_{L^p(\R^n, \nu)} \, \delta^{-s} \frac{(1-s)}{\abs{\Gamma(2-s)}} \to 0
\end{align*}
as $s \to 1^-$. Next,
\begin{align*}
\|I_\delta -& (-\Delta)u\|_{L^p(\R^n, \nu)} \\
	&= \norm{\frac{1}{\Gamma(-s)} \int_0^{\delta} \int_0^t \partial_r e^{r\Delta} u(x) dr\,  \frac{dt}{t^{1+s}} +\Delta u(x)}_{L^p(\R^n, \nu)}\\
	&= \norm{\frac{1}{\Gamma(-s)} \int_0^{\delta} \int_0^t e^{r\Delta} \Delta u(x) dr\,  \frac{dt}{t^{1+s}} + \Delta u(x)}_{L^p(\R^n, \nu)}\\
	&=  \norm{\frac{1}{\Gamma(-s)} \int_0^{\delta} \int_0^t \(e^{r\Delta} \Delta u(x) -\Delta u(x)\) dr\,  \frac{dt}{t^{1+s}} + 
	\(\frac{(-s)\, \delta^{1-s}}{\Gamma(2-s)}  +1\)\Delta u(x)}_{L^p(\R^n, \nu)} \\
	&\leq \frac{1}{\abs{\Gamma(-s)}} \int_0^{\delta} \int_0^t \norm{e^{r\Delta} \Delta u  -\Delta u}_{L^p(\R^n, \nu)} dr\,  \frac{dt}{t^{1+s}}
+\abs{\frac{(-s)\, \delta^{1-s}}{\Gamma(2-s)}  +1} \norm{\Delta u}_{L^p(\R^n, \nu)} \\
	&\leq \varepsilon \, \delta^{1-s} \, \frac{s}{\abs{\Gamma(2-s)}} + \abs{\frac{(-s)\, \delta^{1-s}}{\Gamma(2-s)}  +1} \norm{\Delta u}_{L^p(\R^n, \nu)}\to \varepsilon \quad\hbox{as}~s \to 1^-.
\end{align*}
Since $\varepsilon >0$ was arbitrary, \eqref{eq:limits} follows in $L^p(\R^n,\nu)$.

\medskip

\noindent
\noindent\underline{\textbf{Step 7.}}  The limits as $s\to1^-$ in \eqref{eq:limits}
and as $s\to0^+$ in \eqref{eq:limits2} hold a.e. in $\R^n$.

\smallskip

This is proved as in Step 5, by noticing that $\sup_{0<s<1}|(-\Delta)^su(x)|$ can be bounded by means of Theorem \ref{prop:Mbound}
and that
$\lim_{s\to1^-}(-\Delta)^su(x)=-\Delta u(x)$
and $\lim_{s\to0^+}(-\Delta)^su(x)=u(x)$
for all $x\in\R^n$, for any $u \in C^\infty_c(\R^n)$.

\medskip

\noindent\underline{\textbf{Step 8.}} The limit as $s\to0^+$ in \eqref{eq:limits2} holds in $L^p(\R^n, \nu)$.

\smallskip

By Theorem \ref{prop:Mbound}, for any $0<s<1$,
\begin{align*}
\abs{(-\Delta)^s u(x)- u(x)}^p \nu(x) 
	&\leq \(C_n (M(D^2 u)(x) + M u(x)) + \abs{u(x)}\)^p \nu(x) \\
	&\leq C_{n,p} \((M(D^2 u)(x))^p + (M u(x))^p\) \nu(x).
\end{align*}
Therefore, by Step 7 and the Dominated Convergence Theorem, \eqref{eq:limits2} holds in $L^p(\R^n, \nu)$.

\medskip

This completes the proof of Theorem \ref{thm:laplacian1}, part $(a)$.\qed

\subsection{Proof of Theorem \ref{thm:laplacian1}$(b)$}

Suppose $(-\Delta)^s u \to v$ in $L^p(\R^n, \nu)$ as $s\to1^-$. Let $\varphi \in C^{\infty}_c(\R^n)$ and observe that
\begin{align*}
\int_{\R^n} v  \varphi \, dx
	&= \lim_{s \to 1^-} \int_{\R^n} (-\Delta)^su  \varphi \, dx \\
	&= \lim_{s \to 1^-} \int_{\R^n} u  (-\Delta)^s \varphi \, dx\\
	&= \int_{\R^n} u (-\Delta) \varphi \, dx=(-\Delta u)(\varphi).
\end{align*}
In the first line we used that, by Proposition \ref{prop:LsLp} and the fact that $\varphi \in C^\infty_c(\R^n)$,
\begin{align*}
\bigg|\int_{\R^n} v(x) \varphi(x) \, dx - \int_{\R^n} (-\Delta)^s u(x) \varphi(x) \, dx \bigg|
	&\leq \int_{\R^n}\abs{v(x) - (-\Delta)^s u(x)} \frac{C_\varphi}{1+ \abs{x}^{n}} \, dx \\
	&\leq C_{\varphi,n,p,\nu}\norm{v - (-\Delta)^su}_{L^p(\R^n, \nu)} \to 0
\end{align*}
as $s \to 1^-$, while
in the second to last identity we used the Dominated Convergence Theorem, the fact that $(-\Delta)^s\varphi\in\mathcal{S}_s$,
and Proposition \ref{prop:LsLp} in the case of $L_0$.

Therefore, $v = -\Delta u$ a.e.~in $\R^n$. Since $v \in L^p(\R^n,\nu)$, we get that $\Delta u \in L^p(\R^n,\nu)$.
Now we apply the weighted Calder\'on--Zygmund estimates (see \cite{Duo}). Hence, if $1<p<\infty$, then $u \in W^{2,p}(\R^n, \nu)$
and, as a consequence of part $(a)$, \eqref{eq:limits} holds. On the other hand, if $p=1$, then $D^2u\in$ weak-$L^1(\R^n,\nu)$.
\qed

\subsection{Proof of Theorem \ref{thm:laplacian1}$(c)$}

Suppose $(-\Delta)^s u \to v$ in $L^p(\R^n, \nu)$ as $s\to0^+$, and let $\varphi \in C^{\infty}_c(\R^n)$.
Using the exact same arguments as in part $(b)$, we find that
\begin{align*}
\int_{\R^n} v \varphi\,dx
	&= \lim_{s \to 0^+} \int_{\R^n} (-\Delta)^su \varphi \, dx \\
	&= \lim_{s \to 0^+} \int_{\R^n} u (-\Delta)^s \varphi \, dx= \int_{\R^n} u \varphi \, dx.
\end{align*}
Therefore, $u=v=\lim_{s\to0^+}(-\Delta)^su$ a.e.~in $\R^n$ and the result follows. \qed

\medskip

\noindent\textbf{Acknowledgements.} We are grateful to F.~J.~Mart\'in-Reyes and J.~L.~Torrea
for very interesting comments and observations. We also thank the referees
for helping us improve the presentation of our paper. 



\end{document}